\renewcommand{\mathbf}[1]{\boldsymbol{#1}}
\newtheorem{theorem}{Theorem}
\newtheorem{definition}{Definition}
\newtheorem{lemma}{Lemma}
\newenvironment{proof}[1][Proof]{\noindent\textbf{#1.} }{\ \rule{0.5em}{0.5em}}
\begin{document}

\title{Composite Robust Estimators for Linear Mixed Models}
\author{\textbf{C. Agostinelli}\\Department of Environmental Sciences, Informatics and Statistics \\Ca' Foscari University \\Venice, Italy \\E-mail: claudio@unive.it \\and \\\textbf{V.J. Yohai}\\Departamento de Matem\'aticas and Instituto de C\'alculo \\ FCEyN, University of Buenos Aires and CONICET \\ Buenos Aires, Argentina \\ E-mail: victoryohai@gmail.com}
\date{\today}
\maketitle

\begin{abstract}
The Classical Tukey-Huber Contamination Model (CCM) is a usual framework to
describe the mechanism of outliers generation in robust statistics. In a data
set with $n$ observations and $p$ variables, under the CCM, an outlier is a
unit, even if only one or few values are corrupted. Classical robust
procedures were designed to cope with this setting and the impact of
observations were limited whenever necessary. Recently, a different mechanism
of outliers generation, namely Independent Contamination Model (ICM), was
introduced. In this new setting each cell of the data matrix might be
corrupted or not with a probability independent on the status of the other
cells. ICM poses new challenge to robust statistics since the percentage of
contaminated rows dramatically increase with $p$, often reaching more than
50\% . When this situation appears, classical affine equivariant robust
procedures do not work since their breakdown point is 50\%. For this
contamination model we propose a new type of robust methods namely composite
robust procedures which are inspired on the idea of composite likelihood,
where low dimension likelihood, very often the likelihood of pairs, are
aggregate together in order to obtain an approximation of the full likelihood
which is more tractable. Our composite robust procedures are build over pairs
of observations in order to gain robustness in the independent contamination
model. We propose composite S and $\tau$-estimators for linear mixed models.
Composite $\tau$-estimators are proved to have an high breakdown point both in
the CCM and ICM. A Monte Carlo study shows that our estimators compare
favorably with respect to classical S-estimators under the CCM and outperform
them under the ICM. One example based on a real data set illustrates the new
robust procedure.

\noindent\textbf{Keywords}: Composite S-estimators, Composite $\tau
$-estimators, Independent Contamination Model, Tukey-Huber Contamination
Model, Robust estimation.

\end{abstract}

\section{Introduction}

\label{sec:introduction} The purpose of this paper is to find robust
procedures for mixed linear models. This class of models include among others
ANOVA models with repeated measures, models with random nested design and
models for studying longitudinal data. These models are generally based on the
assumption that the data follow a normal distribution and therefore the
parameters are estimated using the maximum likelihood principle. See for
example, \citet{searle_casella_mcculloch_1992}. As is well known, in general,
the estimator obtained by maximum likelihood under the assumption that the
data have a normal distribution is very sensitive to the presence of a small
fraction of outliers in the sample. More than that, just one outlier may have
an unbounded effect on this estimator. There are many robust estimators that
have been proposed to avoid a large outlier influence. A large list of
references of these proposals is available in \citet{heritier2009}.
\citet{victoria-feser_high_2006} introduces a very interesting robust
S-estimator for mixed linear models based on M-scales which has breakdown
point equal to $0.5$. We can also mention \cite{gill_2000},
\citet{jiang_zhang_2001}, \citet{sinha_2004}, \citet{copt_heritier_2006},
\citet{jacqmin-gadda_sibillot_proust_molina_thiebaut_2007},
\citet{lachosa_deyb_canchoc_2009}, \citet{chervonevavishnyakov2011} and
\citet{koller_2013a} which studied an SMDM-estimator. The procedure proposed
in the last paper is implemented in the \texttt{R} package \texttt{robustlmm} \citep{koller_2013b}.

However all these procedures are focused on coping with outliers generated
under the Classical (Tukey-Huber) Contamination Model (CCM), where some
percentage of the units that compose the sample are replaced by outliers.
However \citet{alqallaf_propagation_2009} introduced another type of
contamination (called Independent Contamination Model, ICM) that may occur in
multivariate data. Instead of contaminating a percentage of the units that
compose the sample, the different cells of each unit may be independently
contaminated. In this case, if the dimension of each unit is large, even a
small fraction of cell contamination may lead to a large fraction of units
with at least one contaminated cell. This type of contamination specially
occurs when the different variables that compose each unit are measured from
independent laboratories. \citet{alqallaf_propagation_2009} showed that for
this type of contamination the breakdown point of affine equivariant
procedures for multivariate location and covariance matrix tends to zero when
the number of variables increases and therefore their degree of robustness is
not satisfactory. A similar phenomenon occurs when dealing with mixed linear
models. In particular the S-estimator procedure introduced in
\citet{victoria-feser_high_2006} loses robustness for high dimensional data
with independent contamination.

In this paper we propose a new class of robust estimators for linear mixed
models. These estimators may be thought as robust counterparts of the
composite likelihood estimators proposed by \citet{lindsay_composite_1988}. If
a vector $\mathbf{y}$ of dimension $p$ is observed, the composite likelihood
estimators are based on the likelihood of all the subvectors of a dimension
$p^{\ast}<p$. The estimators that we propose here are based on $\tau$-scales
of the Mahalanobis distances of two dimensional subvectors of $\mathbf{y}$.
The $\tau$-scale estimators were introduced by \citet{yohai_zamar_1988} and
provides scales estimators which are simultaneously highly robust and highly
efficient. We are going to show that these estimators have a robust behavior
for both contamination models: the classical contamination model and the
independent contamination model. In particular we will show that the breakdown
point for the classical contamination model is $0.5$, while for the
independent contamination model is $0.25$.

In Section \ref{sec:modelnotation} the model and the notation are presented.
Section \ref{sec:compositeS} introduces the Composite S-estimator, while
Section \ref{sec:compositeT} defines the Composite $\tau$-estimator. Sections
\ref{sec:breakdown} and \ref{sec:asymptotic} discuss the breakdown properties
and the asymptotic normality of the Composite $\tau$-estimator, Section
\ref{sec:computation} provides details on the computational algorithm and
Section \ref{sec:examples} illustrates with a real data set the advantages of
the proposed estimator. In Section \ref{sec:simulations} we perform a Monte
Carlo simulation that shows that the proposed procedure has a robust behavior
under both contamination models. Section \ref{sec:conclusions} provides some
concluding remarks. An Appendix contains details on computational aspects and
the proofs of statements reported in previous Sections.

\section{Model and Notation \label{sec:modelnotation}}

Denote by $N_{p}(\mathbf{\mu},\Sigma)$ the multivariate normal distribution of
dimension $p$ with mean $\mathbf{\mu}$ and covariance matrix $\Sigma$. Many
statistical models for components of variance and longitudinal analysis are of
the following form. In the case of fixed covariables is assumed that $n$
independent $p$-dimensional $\operatorname{rand}$om vectors $\mathbf{y}%
_{1},\ldots,\mathbf{y}_{n}$ in $\mathbb{R}^{p}$ are observed, and
$\mathbf{y}_{i},1\leq i\leq n$ has distribution $N_{p}(\mathbf{\mu}%
_{i}(\mathbf{\beta}),\Sigma(\eta,\mathbf{\gamma}))$, where
\begin{align}
\mathbf{\mu}_{i}(\mathbf{\beta})  &  =(\mu_{i1}(\mathbf{\beta}),\ldots
,\mu_{ip}(\mathbf{\beta}))^{\top}\label{mu}\\
&  =\mathbf{x}_{i}\mathbf{\beta},\text{ }1\leq i\leq n,\nonumber
\end{align}
$\mathbf{x}_{1},\ldots,\mathbf{x}_{n}$ are a fixed $p\times k$ matrices and
$\mathbf{\beta}\in\mathbb{R}^{k}$ is an unknown $k$-vector parameter.
Moreover,
\begin{equation}
\Sigma(\eta,\mathbf{\gamma})=\eta(V_{0}+\sum_{j=1}^{J}\gamma_{j}V_{j}),
\label{cov}%
\end{equation}
where $V_{j}$, $1\leq j\leq J$ are $p\times p$ matrices, $V_{0}$ is the
$p\times p$ identity, $\eta>0$ and $\mathbf{\gamma}=(\gamma_{1},\ldots
,\gamma_{J})^{\top}\in\Gamma$ are unknown parameters, where
\begin{equation*}
\Gamma=\{\mathbf{\gamma}\in\mathbb{R}^{J}:\ \Sigma(1,\mathbf{\gamma}%
)\quad\text{is positive definite}\}.
\end{equation*}
In the case of random covariables, that is, when $\mathbf{x}_{1}%
,\ldots,\mathbf{x}_{n}$ are i.i.d random matrices, it is assumed that%
\begin{equation}
\mathbf{y}_{i}|\mathbf{x}_{i}\sim N_{p}\left(  \mathbf{\mu}_{i}(\mathbf{\beta
}),\Sigma(\eta,\mathbf{\gamma})\right)  . \label{cond}%
\end{equation}
This is equivalent to $\mathbf{u}_{i}=\mathbf{y}_{i}-\mathbf{\mu}%
_{i}(\mathbf{\beta})$ independent of $\mathbf{x}_{i}$ with distribution
$N_{p}(\mathbf{0},\Sigma(\eta,\mathbf{\gamma}))$. However, in Section
\ref{sec:asymptotic}, where we study the asymptotic properties of the proposed
estimators, we use a weaker assumption. In fact we only require that
$\mathbf{u}_{i}$ be independent of $\mathbf{x}_{i}$ and have elliptical
distribution with center $\mathbf{0}$ and covariance matrix $\Sigma
(\eta,\mathbf{\gamma})$.

This setup covers several statistical models, for instance that of the form
\begin{equation}
\mathbf{y}_{i}=\mathbf{x}_{i}\mathbf{\beta}+\sum_{j=1}^{J}\mathbf{z}_{j}%
\zeta_{ij}+\mathbf{\varepsilon}_{i},\text{ }1\leq i\leq n,
\label{modelexample}%
\end{equation}
where $\mathbf{x}_{i}$ are as before, $\mathbf{z}_{j}$, $1\leq j\leq J$, are
$p\times q_{j}$ known design matrices for the random effects, $\zeta_{ij}$ are
independent $q_{j}$-dimensional vectors with distribution $N_{q_{j}}%
(0,\sigma_{j}^{2}\mathbf{I}_{q_{j}})$, where\textbf{ }$\mathbf{I}_{p}$ is the
$p\times p$ identity and $\mathbf{\varepsilon}_{i}$ $(1\leq i\leq n)$ are
$p$-dimensional error vectors with distribution $N(0,\sigma_{0}^{2}I_{p})$.
Then, in this case we have $\eta=\sigma_{0}^{2},\mathbf{\gamma}=(\gamma
_{1},\ldots,\gamma_{J})^{\top}$ with $\gamma_{j}=\sigma_{j}^{2}/\sigma_{0}%
^{2}>0$, $V_{j}=\mathbf{z}_{j}\mathbf{z}_{j}^{\top},$ $1\leq j\leq J$.

\subsection{ Composite S-estimator \label{sec:compositeS}}

A very interesting class of S-estimators for the model defined by (\ref{mu})
and (\ref{cov}) was proposed by \citet{victoria-feser_high_2006}.

Given a $p$ dimensional column vector $\mathbf{y}$ and a vector $\mathbf{\mu}$
and $p\times p$ matrix $\Sigma$ the square of the Mahalanobis distance is
defined by
\begin{equation*}
m(\mathbf{y},\mathbf{\mu},\Sigma)=(\mathbf{y}-\mathbf{\mu})^{\top}\Sigma
^{-1}(\mathbf{y}-\mathbf{\mu}).
\end{equation*}
Let $\rho:\mathbb{R}^{+}\rightarrow\mathbb{R}^{+}$, where $\mathbb{R}^{+}$ is
the set of nonnegative real numbers, satisfying the following properties:

\begin{description}
\item[A1] $\rho(0)=0$.

\item[A2] $0\leq v\leq v^{\ast}$ implies $\rho(v)\leq\rho(v^{\ast})$.

\item[A3] $\rho$ is continuous.

\item[A4] sup$_{v}\rho(v)=1$.

\item[A5] If $\rho(u)<1$ and $0\leq u<v$, then $\rho(u)<\rho(v)$.
\end{description}

Let $b$ be defined by
\begin{equation*}
E_{\chi_{_{p}}^{2}}(\rho(v))=b,
\end{equation*}
where $v\sim\chi_{p}^{2}$ is a chi-square distribution with $p$ degrees of
freedom. Then, given a sample $\mathbf{m}=(m_{1},\ldots,m_{n})^{\top}$, an
M-scale estimator $s(\mathbf{m})$ is defined by the value $s$ solution of
\begin{equation}
\frac{1}{n}\sum_{i=1}^{n}\rho\left(  \frac{m_{i}}{s}\right)  =b. \label{Msc}%
\end{equation}
The S-estimator proposed by \citet{victoria-feser_high_2006} is defined by
\begin{equation*}
(\widehat{\mathbf{\beta}},\widehat{\eta},\widehat{\mathbf{\gamma}})=\arg
\min\det_{\eta,\mathbf{\gamma}}\Sigma(\eta,\mathbf{\gamma})
\end{equation*}
subject to
\begin{equation*}
s(m(\mathbf{y}_{1},\mathbf{\mu}_{1}(\mathbf{\beta}),\Sigma(\eta,\mathbf{\gamma
})),\ldots,m(\mathbf{y}_{n},\mathbf{\mu}_{n}(\mathbf{\beta}),\Sigma
(\eta,\mathbf{\gamma})))=1.
\end{equation*}
These estimators can be thought as a constrained version of the S-estimators
for multidimensional location and scatter proposed by \citet{davies_asymptotic_1987}.

Given a squared matrix $A$ we denote by $A^{\ast}=A/|A|^{1/p}$ where $|A|$ is
the determinant of the matrix $A$. Note that $\Sigma^{\ast}(\eta
,\mathbf{\gamma})$ depends only on $\mathbf{\gamma}$ and then will be denoted
by $\Sigma^{\ast}(\mathbf{\gamma})$. It is easy to show that the estimators
proposed by \citet{victoria-feser_high_2006} can be also defined by
\begin{align*}
(\widehat{\mathbf{\beta}},\widehat{\mathbf{\gamma}})  &  =\arg\min
_{\mathbf{\beta,\gamma}}s(m(\mathbf{y}_{1},\mathbf{\mu}_{1}(\mathbf{\beta
}),\Sigma^{\ast}(\mathbf{\gamma})),\ldots,m(\mathbf{y}_{n},\mathbf{\mu}%
_{n}(\mathbf{\beta}),\Sigma^{\ast}(\mathbf{\gamma}))),\\
\widehat{\eta}  &  =s(m(\mathbf{y}_{1},\mathbf{\mu}_{1}(\widehat
{\mathbf{\beta}}),\Sigma(1,\widehat{\mathbf{\gamma}})),\ldots,m(\mathbf{y}%
_{n},\mathbf{\mu}_{n}(\widehat{\mathbf{\beta}}),\Sigma(1,\widehat
{\mathbf{\gamma}}))),
\end{align*}
where the M-scale $s$ is defined now by (\ref{Msc}). Notice that $\widehat{\eta}$
is defined by
\begin{equation*}
\frac{1}{n}\sum_{i=1}^{n}\rho\left(  \frac{m(\mathbf{y}_{i},\mu_{i}%
(\widehat{\mathbf{\beta}}),\Sigma(1,\widehat{\mathbf{\gamma}}))}{\widehat
{\eta}}\right)  =b.
\end{equation*}
In the classical contamination model a fraction $\varepsilon$ of the vectors
$\mathbf{y}_{i}$ are replaced by outliers. \citet{victoria-feser_high_2006}
show that for this model the breakdown point of this estimator is
$\varepsilon^{\ast}=\min(b,1-b)$. Therefore if $b=0.5$, we get $\varepsilon
^{\ast}=0.5$.

\citet{alqallaf_propagation_2009} consider a different contamination model for
multivariate data: the independent contamination model. In this contamination
model if we observe a vector $\mathbf{y}_{i}=(y_{i1},\ldots,y_{ip})^{\top}$
each component $y_{ij}$ of $\mathbf{y}_{i}$ has probability $\varepsilon$ of
being replaced by an outlier. Therefore the probability that at least one
component of $\mathbf{y}_{i}$ be contaminated is $1-(1-\varepsilon)^{p}$, and
this number is close to one when $p$ is large even if $\varepsilon$ is small.

\citet{alqallaf_propagation_2009} showed that the breakdown point for the
independent contamination model of S-estimators of multivariate location and
scatter tends to $0$ when $p\rightarrow\infty$. The same happens with other
popular affine equivariance estimators like the minimum volume ellipsoid
\citep{rousseeuw_multivariate_1985}, Minimum covariance determinant
\citep{rousseeuw_multivariate_1985} or the Donoho-Stahel estimators
\citep{donoho_breakdown_1982, stahel_robuste_1981}. The S-estimator proposed
by \citet{victoria-feser_high_2006} for model (\ref{mu})-(\ref{cov}) have a
similar shortcoming: when $p\rightarrow\infty$, its breakdown point tends to
$0$ under the independent contamination model. For this reason, hereafter we
introduce a new type of estimators namely composite S-estimators and composite
$\tau$-estimators.

Given a vector $\mathbf{a}=(a_{1},\ldots,a_{p})^{\top}$, a $p\times p$ matrix
$A$ and for a couple $(j,l)$ of indices ($1\leq j<l\leq p$) we denote
$\mathbf{a}^{jl}=(a_{j},a_{l})^{\top}$ and $A_{jl}$ the submatrix
\begin{equation*}
A_{jl}=\left(
\begin{array}
[c]{ll}%
a_{jj} & a_{jl}\\
a_{lj} & a_{ll}%
\end{array}
\right)  .
\end{equation*}
In a similar way, given a $p\times k$ matrix $\mathbf{x}$ we denote by
$\mathbf{x}^{jl}$ the matrix of dimension $2\times k$ built by using the
corresponding $(j,l)$ rows of $\mathbf{x}$. We define a pairwise squared
Mahalanobis distance and a pairwise scale by
\begin{align}
m_{i}^{jl}(\mathbf{\beta},\mathbf{\gamma})  &  =m(\mathbf{y}_{i}%
^{jl},\mathbf{\mu}_{i}^{jl}(\mathbf{\beta}),\Sigma_{jl}^{\ast}(\mathbf{\gamma
})),\nonumber\\
s_{jl}(\mathbf{\beta},\mathbf{\gamma})  &  =s(m_{1}^{jl}(\mathbf{\beta
},\mathbf{\gamma}),\ldots,m_{n}^{jl}(\mathbf{\beta},\mathbf{\gamma})),
\label{sij}%
\end{align}
where the M-scale $s$ is now defined by (\ref{Msc}) with $b$ given by
\begin{equation}
E_{\chi_{2}^{2}}(\rho(v))=b. \label{Msc2}%
\end{equation}
Thus $S(\mathbf{\beta},\mathbf{\gamma})$ is defined by%
\begin{equation}
S(\mathbf{\beta},\mathbf{\gamma})=\sum_{j=1}^{p-1}\sum_{l=j+1}^{p}%
s_{jl}(\mathbf{\beta},\mathbf{\gamma}). \label{goal}%
\end{equation}
Similarly to \citet{victoria-feser_high_2006}, we define for the model in
(\ref{mu})-(\ref{cov}), the composite S-estimator of $\mathbf{\beta}$ and
$\mathbf{\gamma}$ by
\begin{equation}
(\widehat{\mathbf{\beta}},\widehat{\mathbf{\gamma}})=\arg\min_{\mathbf{\beta
,\gamma}}S(\mathbf{\beta},\mathbf{\gamma}) \label{equ:S}%
\end{equation}
and the estimator $\widehat{\eta}$ of $\eta$ by
\begin{equation}
\frac{2}{p(p-1)n}\sum_{i=1}^{n}\sum_{j=1}^{p-1}\sum_{l=j+1}^{p}\rho\left(
\frac{(\mathbf{y}_{i}^{jl}-\mathbf{x}_{i}^{jl}\widehat{\mathbf{\beta}})^{\top
}\Sigma_{jl}(1,\widehat{\mathbf{\gamma}})^{-1}(\mathbf{y}_{i}^{jl}%
-\mathbf{x}_{i}^{jl}\widehat{\mathbf{\beta}})}{\widehat{\eta}}\right)  =b.
\label{equ:eta0}%
\end{equation}
One shortcoming of the composite S-estimators are, as occurs with regression
S-estimators, that they are not simultaneously highly robust and highly
efficient. For this reason in next section we introduce the composite $\tau
$-estimators which are defined similarly to the $S$-estimators, but replacing
the M-scale by a $\tau$-scale.

\section{Composite $\tau$-Estimator \label{sec:compositeT}}

In this section we introduce the composite $\tau$-estimator. A $\tau$-scale is
defined using two functions $\rho_{1}$ and $\rho_{2}$. Given a sample
$\mathbf{m}=(m_{1},\ldots,m_{n})^{\top}$, the function $\rho_{1}$ is used to
define an M-scale $s$ by
\begin{equation}
\frac{1}{n}\sum_{i=1}^{n}\rho_{1}\left(  \frac{m_{i}}{s}\right)  =b,
\end{equation}
and the $\tau$-scale by
\begin{equation*}
\tau=s\frac{1}{n}\sum_{i=1}^{n}\rho_{2}\left(  \frac{m_{i}}{s}\right)  .
\end{equation*}
We will require that $\rho_{1}$ and $\rho_{2}$ satisfy A1-A5. Put $\psi
_{i}(v)=\rho_{i}^{\prime}(v)$, $i=1,2$. In \citet{yohai_zamar_1988} it is
shown that to guarantee the Fisher consistency of the $\tau$-estimators of
regression, it is required that $\rho_{2}$ satisfies the following condition:

\begin{description}
\item[A6] $\rho_{2}$ is continuously differentiable and $2\rho_{2}(v)-\psi
_{2}(v)v>0$ for $v>0$.
\end{description}

The breakdown point of the $\tau$-scale is the same as that of the $s$-scale.
Then we are going to set $b=0.5$ to have breakdown point close to $0.5$ in the
classical contamination model.

The estimators are going to be defined as in the previous section by replacing
the M-scales by the $\tau$-scales. Then $s_{jl}(\mathbf{\beta},\mathbf{\gamma
})$ is defined as in (\ref{sij}) and the $\tau$-scale is
\begin{equation*}
\tau_{jl}(\mathbf{\beta},\mathbf{\gamma})=s_{jl}(\mathbf{\beta},\mathbf{\gamma
})\frac{1}{n}\sum\rho_{2}\left(  \frac{m_{i}^{jl}(\mathbf{\beta}%
,\mathbf{\gamma})}{s_{jl}(\mathbf{\beta},\mathbf{\gamma})}\right)  .
\end{equation*}
Let $T(\mathbf{\beta},\mathbf{\gamma})$ be the sum of all the $\tau_{jl}$
scales, i.e.,
\begin{equation}
T(\mathbf{\beta},\mathbf{\gamma})=\sum_{j=1}^{p-1}\sum_{l=j+1}^{p}\tau
_{jl}(\mathbf{\beta},\mathbf{\gamma}), \label{goal-tau}%
\end{equation}
then the composite $\tau$-estimator of $(\mathbf{\beta,\gamma})$ is defined as
follows
\begin{equation}
(\widehat{\mathbf{\beta}},\widehat{\mathbf{\gamma}})=\arg\min_{\mathbf{\beta
},\mathbf{\gamma}}T(\mathbf{\beta},\mathbf{\gamma}), \label{deftaues}%
\end{equation}
while $\widehat{\eta}$ is obtained as in (\ref{equ:eta0}) setting $\rho
=\rho_{1}$.

In the example of Section \ref{sec:examples} and in the Monte Carlo study of
Section \ref{sec:simulations} we took $\rho_{i},$ $i=1,2$ in the following
family of functions,%

\begin{equation}
\rho_{c}^{\text{o}\ast}(u)=\rho_{c}^{\text{o}}(u^{1/2}),\label{optimalsq}%
\end{equation}
where $\rho_{c}^{\text{o}}$ is the family of rho functions introduced by
\citet{muler_yohai_2002} defined by
\begin{equation}
\rho_{c}^{\text{o}}(v)=\left\{
\begin{array}
[c]{lr}%
\frac{v^{2}}{2ac^{2}} & v\leq2\\
\frac{1}{a}\left(  \frac{a_{4}}{8}\frac{v^{8}}{c^{8}}+\frac{a_{3}}{6}%
\frac{v^{6}}{c^{6}}+\frac{a_{2}}{4}\frac{v^{4}}{c^{4}}+\frac{a_{1}}{2}%
\frac{v^{2}}{c^{2}}+a_{0}\right)   & 2<v\leq3\\
1 & v>3
\end{array}
\right.  \label{optimal}%
\end{equation}
where $a_{0}=1.792$, $a_{1}=-1.944$, $a_{2}=1.728$, $a_{3}=-0.312$,
$a_{4}=0.016$ and $a=3.250$. The functions in this family have shapes close to
that of those in the optimal family obtained by \citet{yohai_zamar_1997}.
However, they are easier to compute. The reason why we compose the function
$\rho_{c}^{\text{o}}(v)$ with the square root is that the functions
$\rho_{c}^{\text{o}\ast}$ are applied to the squared Mahalanobis distances.
Notice that for any $\lambda>0,$ the $\tau$ scale obtained with $\rho_{1}%
=\rho_{\lambda c_{1}}^{\text{o}\ast}$ and $\rho_{2}=\rho_{\lambda c_{2}%
}^{\text{o}\ast}$ is equal to the  $\tau$ scale corresponding to $\rho
_{1}=\rho_{c_{1}}^{\text{o}\ast}$ and $\rho_{2}=\rho_{c_{2}}^{\text{o}\ast}$
divided by $\lambda.$ Hence without loss of generality we can take
$\rho_{1}=\rho_{1}^{\text{o}\ast}$. We found that taking $\rho_{2}=\rho
_{1.64}^{\text{o}\ast}$ we obtain a good trade-off between robustness and
efficiency, and these are  the values  that we recommend to use.

It is easy to show that the composite $\tau$-estimators are equivariant for
regression transformations of the form $\mathbf{y}_{i}^{\ast}=\mathbf{y}%
_{i}+\mathbf{x}_{i}\mathbf{\delta}$ where $\mathbf{\delta}$ is a $k\times1$
vector, affine transformations of the form $\mathbf{x}_{i}^{\ast}%
=\mathbf{x}_{i}B$, where $B$ is a $k\times k$ non singular matrix or scale
transformations of the form $\mathbf{y}_{i}^{\ast}=\zeta y_{i}$, where $\zeta$
is a scalar.

\section{Breakdown point $_{{}}$\label{sec:breakdown}}

\citet{donoho_huber_1983} introduced the concept of a finite sample breakdown
point (FSBDP). For our case, let $\widehat{\mathbf{\beta}}$ and $\widehat
{\mathbf{\upsilon}}=\widehat{\eta}(1,\widehat{\mathbf{\gamma}})$ be estimators
of $\mathbf{\beta}$ and $\mathbf{\upsilon}=(\eta,\eta\mathbf{\gamma})$.
Informally speaking, the FSBDP of $\widehat{\mathbf{\beta}}$ is the smallest
fraction of outliers that makes the estimator unbounded.

To formalize this, let $\mathbf{T}$ be a data set of size $n$ corresponding to
model (\ref{mu})-(\ref{cov}), $\mathbf{T}=(\mathbf{t}_{1},\ldots
,\mathbf{t}_{n})$, $\mathbf{t}_{i}=(\mathbf{y}_{i},\mathbf{x}_{i}%
)=(\mathbf{t}_{i1},\ldots,\mathbf{t}_{ip})^{\top}$, $\mathbf{y}_{i}%
\in\mathbb{R}^{p}$, $\mathbf{x}_{i}\in\mathbb{R}^{p\times k}$ and
$\mathbf{t}_{ij}=(y_{ij},x_{ij1},\ldots,x_{ijk})$ ($1\leq j\leq p$). Let
$\mathcal{T}_{m}^{(\text{C})}$ be the set of all the samples $\mathbf{\check
{T}}=(\check{\mathbf{t}}_{1},\ldots,\check{\mathbf{t}}_{n})$ with
$\check{\mathbf{t}}_{i}=(\check{\mathbf{t}}_{i1},\ldots,\check{\mathbf{t}%
}_{ip})^{\top}$ such that $\#\{i:\check{\mathbf{t}}_{i}=\mathbf{t}_{i}\}\geq
n-m$. Given estimators $\widehat{\mathbf{\beta}}$ and $\widehat
{\mathbf{\upsilon}}$ we let
\begin{align*}
B_{m}^{(\text{C})}(\mathbf{T},\widehat{\mathbf{\beta}})  &  =\sup
\{\Vert\widehat{\mathbf{\beta}}(\mathbf{\check{T}})\Vert,\mathbf{\check{T}}%
\in\mathcal{T}_{m}^{(\text{C})}\},\\
B_{m}^{+(\text{C})}(\mathbf{T},\widehat{\mathbf{\upsilon}})  &  =\sup
\{\Vert\widehat{\mathbf{\upsilon}}(\mathbf{\check{T}})\Vert,\mathbf{\check{T}%
}\in\mathcal{T}_{m}^{(\text{C})}\},\\
B_{m}^{-(\text{C})}(\mathbf{T},\widehat{\mathbf{\upsilon}})  &  =\inf
\{\Vert\widehat{\mathbf{\upsilon}}(\mathbf{\check{T}})\Vert,\mathbf{\check{T}%
}\in\mathcal{T}_{m}^{(\text{C})}\}.
\end{align*}

\begin{definition}
The finite sample breakdown point of $\widehat{\mathbf{\beta}}$ for classical
contamination (FSBDPCC) at the sample $\mathbf{T}$ is defined by
$\varepsilon^{(\text{C})}(\mathbf{T},\widehat{\mathbf{\beta}})=m^{\ast}/n$
where $m^{\ast}=\min\{m:B_{m}^{(\text{C})}(\mathbf{T},\widehat{\mathbf{\beta}%
})=\infty\}$ and the breakdown point of $\widehat{\mathbf{\upsilon}}$ by
$\varepsilon^{(\text{C})}(\mathbf{T},\widehat{\mathbf{\upsilon}})=m^{\ast}/n$
where
\begin{equation*}
m^{\ast}=\min\{m:\frac{1}{B_{m}^{-(\text{C})}(\mathbf{T},\widehat
{\mathbf{\upsilon}})}+B_{m}^{+(\text{C})}(\mathbf{T},\widehat{\mathbf{\upsilon
}})=\infty\}.
\end{equation*}

\end{definition}

Let $\mathcal{T}_{m}^{(\text{I}).}$ be the set of all the samples
$\mathbf{\check{T}}=(\check{\mathbf{t}}_{1},\ldots,\check{\mathbf{t}}_{n})$
such that $\#\{i:\check{\mathbf{t}}_{ij}=\mathbf{t}_{ij}\}\geq n-m$ for each
$j$, $1\leq j\leq p$. Given estimators $\widehat{\mathbf{\beta}}$ and
$\widehat{\mathbf{\upsilon}}$ we let
\begin{align*}
B_{m}^{(\text{I}).}(\mathbf{T},\widehat{\mathbf{\beta}})  &  =\sup
\{\Vert\widehat{\mathbf{\beta}}(\mathbf{\check{T}})\Vert,\mathbf{\check{T}}%
\in\mathcal{T}_{m}^{(\text{I})}\},\\
B_{m}^{+(\text{I})}(\mathbf{T},\widehat{\mathbf{\upsilon}})  &  =\sup
\{\Vert\widehat{\mathbf{\upsilon}}(\mathbf{\check{T}})\Vert,\mathbf{\check{T}%
}\in\mathcal{T}_{m}^{(\text{I})}\},\\
B_{m}^{-(\text{I})}(\mathbf{T},\widehat{\mathbf{\upsilon}})  &  =\inf
\{\Vert\widehat{\mathbf{\upsilon}}(\mathbf{\check{T}})\Vert,\mathbf{\check{T}%
}\in\mathcal{T}_{m}^{(\text{I})}\}.
\end{align*}

\begin{definition}
The finite sample breakdown point for $\widehat{\mathbf{\beta}}$ under
independent contamination (FSBDPIC) at the sample $\mathbf{T}$ is defined by
$\varepsilon^{(\text{I})}(\mathbf{T}$,$\widehat{\mathbf{\beta}})=m^{\ast}/n$
where $m^{\ast}=\min\{m:B_{m}^{(\text{I})}(\mathbf{T}$,$\widehat
{\mathbf{\beta}})=\infty\}$ and the breakdown point of $\widehat
{\mathbf{\upsilon}}$ by $\varepsilon^{(\text{I})}(\mathbf{T},\widehat
{\mathbf{\upsilon}})=m^{\ast}/n$ where
\begin{equation*}
m^{\ast}=\min\{m:\frac{1}{B_{m}^{-(\text{I})}(\mathbf{T},\widehat
{\mathbf{\upsilon}})}+B_{m}^{+(\text{I})}(\mathbf{T},\widehat{\mathbf{\upsilon
}})=\infty\}.
\end{equation*}

\end{definition}

The following theorems, whose proofs are discussed in Appendix
\ref{app:breakdown1}, gives a lower bound for the breakdown point of composite
$\tau$-estimators under both the classical and the independent contamination
models. Before to state the Theorems we need the following notation. Given a
sample $\mathbf{T}$ $=(\mathbf{t}_{1},\ldots,\mathbf{t}_{n})$ we define
\begin{align}
h_{jl}(\mathbf{T})  &  =\max_{\Vert\mathbf{b}\Vert>0}\#\{i:\mathbf{x}_{i}%
^{jl}\mathbf{b}=\mathbf{0}\},\label{equ:hjl}\\
h(\mathbf{T})  &  =\max_{jl}h_{jl}(\mathbf{T}),\label{equ:h}\\
h_{jl}^{\ast}(\mathbf{T})  &  =\max_{\Vert\mathbf{u}\Vert>0,\mathbf{b}%
}\#\{i:\mathbf{u}^{\top}(\mathbf{y}_{i}^{jl}-\mathbf{x}_{i}^{jl}%
\mathbf{b})=\mathbf{0}\},\label{equ:hjlstar}\\
h^{\ast}(\mathbf{T})  &  =\max_{jl}h_{jl}^{\ast}(\mathbf{T}),
\label{equ:hstar}\\
f(\mathbf{T})  &  =h(\mathbf{T})+h^{\ast}(\mathbf{T}). \label{equ:f}%
\end{align}

\begin{theorem}
\label{teo:classicbp} Let $\mathbf{T}=(\mathbf{t}_{1},\ldots,\mathbf{t}_{n})$,
$\mathbf{t}_{i}=(\mathbf{y}_{i},\mathbf{x}_{i})$, $f$ as defined in
(\ref{equ:f}). Assume that A1-A6 holds and let $(\widehat{\mathbf{\beta}%
},\widehat{\mathbf{\upsilon}})$ be the composite $\tau$-estimator for the
model given by (\ref{mu}) and (\ref{cov}). Then a lower bound for
$\varepsilon^{(\text{C})}(\mathbf{T},\widehat{\mathbf{\beta}})$ and for
$\varepsilon^{(\text{C})}(\mathbf{T},\widehat{\mathbf{\upsilon}})$ is given by
$\min((1-b)-f(\mathbf{T})/n,b)$.
\end{theorem}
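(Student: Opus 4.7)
The plan is the classical objective-function argument for S- and $\tau$-estimators: bound the minimised $T(\widehat{\mathbf{\beta}},\widehat{\mathbf{\gamma}})$ from above by its value at a safe reference parameter on any contaminated sample, then show that any sequence of estimates along which $\widehat{\mathbf{\beta}}$ or $\widehat{\mathbf{\upsilon}}$ breaks down forces some $\tau_{jl}$ to diverge, contradicting the upper bound. Fix $m$ with $m/n<\min(b,\,1-b-f(\mathbf{T})/n)$ and $\check{\mathbf{T}}\in\mathcal{T}_m^{(\text{C})}$. For any fixed $(\mathbf{\beta}_0,\mathbf{\gamma}_0)$ the pairwise squared Mahalanobis distances $m_i^{jl}(\mathbf{\beta}_0,\mathbf{\gamma}_0)$ coincide with their clean-sample values on the $\geq n-m>(1-b)n$ unchanged observations; since the at-most-$m/n<b$ corrupted summands $\rho_1(\cdot/s)$ in (\ref{Msc}) cannot exhaust the budget $b$ on their own, $s_{jl}(\mathbf{\beta}_0,\mathbf{\gamma}_0)$ stays bounded above by a constant depending only on $\mathbf{T}$. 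Using $\rho_2\leq 1$ gives $\tau_{jl}(\mathbf{\beta}_0,\mathbf{\gamma}_0)\leq s_{jl}(\mathbf{\beta}_0,\mathbf{\gamma}_0)$, and summing over the $\binom{p}{2}$ pairs yields the uniform upper bound $T(\widehat{\mathbf{\beta}},\widehat{\mathbf{\gamma}})\leq T(\mathbf{\beta}_0,\mathbf{\gamma}_0)\leq K_0$.

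For the lower bound, suppose along some sequence $\check{\mathbf{T}}_k\in\mathcal{T}_m^{(\text{C})}$ one has $\|\widehat{\mathbf{\beta}}_k\|\to\infty$; the cases $\|\widehat{\mathbf{\upsilon}}_k\|\to\infty$ and $\|\widehat{\mathbf{\upsilon}}_k\|\to 0$ are handled in parallel and are sketched at the end. Pass to subsequences so that $\widehat{\mathbf{\beta}}_k/\|\widehat{\mathbf{\beta}}_k\|\to\mathbf{b}^{*}$ with $\|\mathbf{b}^{*}\|=1$, and for each pair $(j,l)$ diagonalise $\Sigma_{jl}^{*}(\widehat{\mathbf{\gamma}}_k)=U_k\operatorname{diag}(\lambda_{1,k},\lambda_{2,k})U_k^{\top}$ with $U_k\to U^{*}$ and $\lambda_{1,k}\leq\lambda_{2,k}$ tending to limits in $[0,\infty]$. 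Writing $r_{i,k}=\mathbf{y}_i^{jl}-\mathbf{x}_i^{jl}\widehat{\mathbf{\beta}}_k$,
\begin{equation*}
m_i^{jl}(\widehat{\mathbf{\beta}}_k,\widehat{\mathbf{\gamma}}_k)=\frac{(\mathbf{u}_{1,k}^{\top}r_{i,k})^{2}}{\lambda_{1,k}}+\frac{(\mathbf{u}_{2,k}^{\top}r_{i,k})^{2}}{\lambda_{2,k}}.
\end{equation*}
In the next paragraph I will show that some pair $(j,l)$ has $m_i^{jl}\to\infty$ on at least $n-f(\mathbf{T})-m$ clean indices. Since $n-f(\mathbf{T})-m>bn$, equation (\ref{Msc}) forces $s_{jl}(\widehat{\mathbf{\beta}}_k,\widehat{\mathbf{\gamma}}_k)\to\infty$: otherwise a strict majority of the summands $\rho_1(m_i^{jl}/s_{jl})$ would tend to $1$ and the average would exceed $b$. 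As noted in Section \ref{sec:compositeT}, the $\tau$-scale has the same upper breakdown as $s_{jl}$, so $\tau_{jl}\to\infty$ and $T\to\infty$, contradicting $K_0$.

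The heart of the proof is the index-counting claim. The clean indices on which $m_i^{jl}$ fails to diverge split into two regimes. If $\lambda_{1,k}$ stays bounded away from $0$, then $m_i^{jl}$ is bounded below by a constant times $\|r_{i,k}\|^{2}$ and diverges on every clean $i$ with $\mathbf{x}_i^{jl}\mathbf{b}^{*}\neq\mathbf{0}$; the safe clean indices then form a subset of $\{i:\mathbf{x}_i^{jl}\mathbf{b}^{*}=\mathbf{0}\}$, of cardinality $\leq h_{jl}(\mathbf{T})$ by (\ref{equ:hjl}). If $\lambda_{1,k}\to 0$, the first term alone drives $m_i^{jl}\to\infty$ on every clean $i$ for which $\mathbf{u}_1^{*\top}r_{i,k}$ stays bounded below in modulus; the safe indices are the clean $i$ admitting some $\mathbf{b}$ (take $\mathbf{b}=\widehat{\mathbf{\beta}}_k$ in the limit) with $\mathbf{u}_1^{*\top}(\mathbf{y}_i^{jl}-\mathbf{x}_i^{jl}\mathbf{b})=0$, hence a subset of cardinality $\leq h_{jl}^{*}(\mathbf{T})$ by (\ref{equ:hjlstar}). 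Taking the union over the two regimes bounds the safe clean indices by $h_{jl}(\mathbf{T})+h_{jl}^{*}(\mathbf{T})\leq f(\mathbf{T})$; adding the $\leq m$ contaminated indices gives $f(\mathbf{T})+m$ in total, as claimed. The subtle step I expect to spend most care on is verifying that a genuinely mixed regime --- $\lambda_{1,k}\to 0$ while $\widehat{\mathbf{\beta}}_k$ diverges in a direction aligned with the kernel of $\mathbf{u}_1^{*\top}\mathbf{x}_i^{jl}$ --- does not require adding the two budgets on the same observation, and that the maxima over pairs in (\ref{equ:h})-(\ref{equ:hstar}) allow one pair $(j,l)$ to realise the adversarial bound $f(\mathbf{T})$ simultaneously.

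Boundedness of $\widehat{\mathbf{\upsilon}}=\widehat{\eta}(1,\widehat{\mathbf{\gamma}})$ follows from the same machinery. If $\widehat{\mathbf{\gamma}}_k$ leaves every compact subset of $\Gamma$, some $\Sigma_{jl}^{*}(\widehat{\mathbf{\gamma}}_k)$ degenerates and the $\lambda_{1,k}\to 0$ regime above yields $\tau_{jl}\to\infty$ once more. If $\widehat{\mathbf{\beta}}_k,\widehat{\mathbf{\gamma}}_k$ stay bounded but $\widehat{\eta}_k\to\infty$ or $\widehat{\eta}_k\to 0$, applying (\ref{equ:eta0}) forces the left-hand side to $0$ or to a quantity exceeding $b$ respectively --- again using the $f(\mathbf{T})/n$ bound on pairs with vanishing numerator to keep enough non-trivial terms --- a contradiction; this completes the lower bound on the FSBDPCC for both $\widehat{\mathbf{\beta}}$ and $\widehat{\mathbf{\upsilon}}$.
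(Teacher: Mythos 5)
Your treatment of $\widehat{\mathbf{\beta}}$ is essentially the paper's argument: the same objective-function comparison against a fixed reference parameter, the same counting through $h_{jl}$ and $h_{jl}^{\ast}$, and the same appeal to the boundedness/divergence facts for M- and $\tau$-scales. The only structural difference is that the paper first proves the condition-number bound on $\Sigma_{jl}^{\ast}(\widehat{\mathbf{\gamma}})$ as a separate statement (Lemma \ref{lemma:bdpgamma}) and then feeds it into the $\widehat{\mathbf{\beta}}$ argument, whereas you run a two-regime case split on $\lambda_{1,k}$ inside the divergence argument. That works, and the mixed-regime worry you flag does resolve favourably: when $\lambda_{1,k}\to 0$, boundedness of $m_i^{jl}$ already forces $|\mathbf{u}_{1,k}^{\top}r_{i,k}|\to 0$, so the $h_{jl}^{\ast}$ budget alone covers that regime and the two budgets are never charged to the same observation. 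Two points of rigor to tighten: the exceptional index set may change with $k$, so conclude ``for every $k$ more than $[bn]$ of the $m_i^{jl}$ exceed $\delta^{\ast 2}/\lambda_{1,k}$'' rather than fixing the divergent indices; and the cardinality bound you need is for $|\mathbf{u}^{\top}(\mathbf{y}_i^{jl}-\mathbf{x}_i^{jl}\mathbf{b})|<\delta^{\ast}$ uniformly over $(\mathbf{u},\mathbf{b})$, not for exact zeros ``in the limit'' (there is no limiting $\mathbf{b}$ when $\widehat{\mathbf{\beta}}_k$ diverges); this is precisely what the quantities $\delta_{jl}$, $\delta_{jl}^{\ast}$ of Lemma \ref{lemmadelta} supply.

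The genuine gap is in the $\widehat{\mathbf{\upsilon}}$ part. Your first case rests on the implication ``if $\widehat{\mathbf{\gamma}}_k$ leaves every compact subset of $\Gamma$ then some $\Sigma_{jl}^{\ast}(\widehat{\mathbf{\gamma}}_k)$ degenerates,'' and this is false: the composite objective only sees $2\times 2$ marginals, and $\Sigma(1,\mathbf{\gamma})$ can approach singularity with every pairwise block perfectly conditioned. For instance, with $p\geq 3$ and an equicorrelation structure in which every off-diagonal entry of $\Sigma(1,\gamma)$ equals $\gamma$, letting $\gamma\downarrow -1/(p-1)$ drives $\mathbf{\gamma}$ to the boundary of $\Gamma$ while every normalized pairwise block has correlation bounded away from $\pm 1$. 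Moreover the case split is organized around the wrong variable: breakdown of $\widehat{\mathbf{\upsilon}}=\widehat{\eta}(1,\widehat{\mathbf{\gamma}})$ means $\Vert\widehat{\mathbf{\upsilon}}\Vert\to 0$ or $\Vert\widehat{\mathbf{\upsilon}}\Vert\to\infty$, which is not the same dichotomy as ``$\widehat{\mathbf{\gamma}}$ bounded with $\widehat{\eta}$ degenerate'' versus ``$\widehat{\mathbf{\gamma}}$ unbounded.'' The paper's Lemma \ref{lemma:bdpeta} argues directly on $\Vert\widehat{\mathbf{\upsilon}}\Vert$: if it tends to $0$, every eigenvalue of every $\Sigma_{jl}(\widehat{\eta},\widehat{\mathbf{\gamma}})$ tends to $0$, so more than $[bn]$ of the quadratic forms in (\ref{equ:eta0}) diverge and the implicit M-scale cannot remain equal to the fixed constant $s_{0}$; if it tends to $\infty$, the condition-number bound on $\Sigma_{jl}^{\ast}(\widehat{\mathbf{\gamma}})$ converts a divergent entry into divergence of both eigenvalues of some $\Sigma_{jl}(\widehat{\eta},\widehat{\mathbf{\gamma}})$ and yields the symmetric contradiction. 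You should replace your case (i)/(ii) split by this direct argument; as written, a sequence with $\widehat{\mathbf{\gamma}}_k$ escaping $\Gamma$ but all pairwise blocks well conditioned is not excluded by your reasoning.
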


Note that taking $b=0.5$, this lower bound is close to $0.5$ for large $n$
independently of $p$.

\begin{theorem}
\label{teo:independentbp} Let $\mathbf{T}=(\mathbf{t}_{1},\ldots
,\mathbf{t}_{n})$, $\mathbf{t}_{i}=(\mathbf{y}_{i},\mathbf{x}_{i})$, $f$ as
defined in (\ref{equ:f}). Assume that A1-A6 holds and let $(\widehat
{\mathbf{\beta}},\widehat{\mathbf{\upsilon}})$ be the composite $\tau
$-estimator for the model given by (\ref{mu}) and (\ref{cov}). Then a lower
bound for $\varepsilon^{(\text{I})}(\mathbf{T},\widehat{\mathbf{\beta}})$ and
for $\varepsilon^{(\text{I})}(\mathbf{T},\widehat{\mathbf{\upsilon}})$ is
given by $0.5\min((1-b)-f(\mathbf{T})/n,b)$.
\end{theorem}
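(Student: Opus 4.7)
The plan is to reduce Theorem \ref{teo:independentbp} to Theorem \ref{teo:classicbp} by exploiting the structure of $T(\mathbf{\beta},\mathbf{\gamma})$ in (\ref{goal-tau}) as a sum of pairwise $\tau$-scales $\tau_{jl}$, each of which depends only on the two-coordinate marginals $\{(\mathbf{y}_i^{jl},\mathbf{x}_i^{jl})\}_{i=1}^n$. The key quantitative observation is that under independent contamination with budget $m$ per coordinate, each pair subsample sees at most $2m$ corrupted rows, because
\begin{equation*}
\#\{i:\check{\mathbf{t}}_i^{jl}\neq\mathbf{t}_i^{jl}\}\leq \#\{i:\check{\mathbf{t}}_{ij}\neq\mathbf{t}_{ij}\}+\#\{i:\check{\mathbf{t}}_{il}\neq\mathbf{t}_{il}\}\leq 2m.
\end{equation*}
Equivalently, for every $\mathbf{\check T}\in\mathcal{T}_m^{(\text{I})}$ and every pair $(j,l)$, the restricted sample $\{\check{\mathbf{t}}_i^{jl}\}_{i=1}^n$ lies in the pairwise projection of $\mathcal{T}_{2m}^{(\text{C})}$.

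Next I would revisit the argument behind Theorem \ref{teo:classicbp} and observe that it is essentially modular over pairs. Boundedness of $T(\widehat{\mathbf{\beta}},\widehat{\mathbf{\gamma}})$ at the original parameters gives a uniform upper bound on each $\tau_{jl}(\widehat{\mathbf{\beta}},\widehat{\mathbf{\gamma}})$, and an explosion of $\widehat{\mathbf{\beta}}$ or of $\widehat{\mathbf{\upsilon}}$ (either in norm or toward zero) forces, on at least one pair $(j,l)$, too many Mahalanobis distances $m_i^{jl}$ either to blow up or to shrink to zero, which drives $\tau_{jl}$ to $\infty$ or $0$ and contradicts the bound. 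The two thresholds $b$ (governing the upper breakdown of the M-/$\tau$-scale via A4) and $1-b$ (governing the lower breakdown, via A5 together with the regularity needed to identify the pairwise regression, which is precisely what $h_{jl}(\mathbf{T})$ and $h_{jl}^{\ast}(\mathbf{T})$ quantify) combine into the condition $m<\min(n(1-b)-f(\mathbf{T}),nb)$ for Theorem \ref{teo:classicbp}.

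Then I would rerun this pair-by-pair analysis with $m$ replaced by $2m$, invoking the reduction from the first paragraph. The non-breakdown condition becomes $2m<\min(n(1-b)-f(\mathbf{T}),nb)$, i.e.
\begin{equation*}
\frac{m}{n}<\tfrac12\min\!\Bigl((1-b)-\tfrac{f(\mathbf{T})}{n},\,b\Bigr),
\end{equation*}
which establishes the claimed lower bound for both $\varepsilon^{(\text{I})}(\mathbf{T},\widehat{\mathbf{\beta}})$ and $\varepsilon^{(\text{I})}(\mathbf{T},\widehat{\mathbf{\upsilon}})$.

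The main obstacle is the modularity claim itself: one must verify that controlling each $\tau_{jl}$ individually really does confine $(\widehat{\mathbf{\beta}},\widehat{\mathbf{\upsilon}})$ to a compact set of admissible values, and that the identifiability threshold captured by $f(\mathbf{T})$ does not inflate when passing from the CCM to the ICM setting. Once this is checked inside the proof of Theorem \ref{teo:classicbp}, the factor of $\tfrac12$ appears automatically from the worst-case overlap bound above and nothing further is needed; in particular, the quantity $f(\mathbf{T})$ is intrinsic to the clean design $\mathbf{T}$ and the same bound is valid under either contamination model.
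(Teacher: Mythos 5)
Your proposal is correct and follows essentially the same route as the paper: the authors also prove Theorem \ref{teo:independentbp} by observing that under independent contamination with per-coordinate budget $m$, each pair $(j,l)$ sees at most $2m$ contaminated rows, so Lemmas \ref{lemma:bdpgamma}, \ref{lemma:bdpbeta} and \ref{lemma:bdpeta} (which are already pairwise in nature) apply with $m$ replaced by $2m$, yielding the factor $0.5$. Your write-up is in fact somewhat more explicit than the paper's one-line argument about why the union bound gives $2m$ and where the halving enters.
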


In this case taking $b=0.5$ this lower bound is close to $0.25$ for large $n$
independently of $p$.

\section{Consistency and Asymptotic Normality \label{sec:asymptotic}}

In this Section we study the almost sure consistency and asymptotic normality
of the composite $\tau$-estimators. We need the following additional
assumptions for consistency

\begin{description}
\item[A7] The vector $\mathbf{x}$ is random and the error $\mathbf{u}%
=\mathbf{y}-\mathbf{x} \mathbf{\beta}_{0}$ is independent of $\mathbf{x}$ and
$\mathbf{u}$ has an elliptical density of the form
\begin{equation}
f(\mathbf{u})=\frac{f_{0}^{\ast}(\mathbf{u}^{\top}\Sigma(\eta_{0}%
,\mathbf{\gamma}_{0})^{-1}\mathbf{u})}{|\Sigma(\eta_{0},\mathbf{\gamma}%
_{0})|^{1/2}}, \label{denseli}%
\end{equation}
where $f_{0}^{\ast}$ is non increasing and is strictly decreasing in a
neighborhood of 0.

\item[A8] Let $H_{0}$ be the distribution of $\mathbf{x.}$ Then for any
$\mathbf{\delta}\in\mathbb{R}^{k}$, $\mathbf{\delta} \neq\mathbf{0}$ we have
$P_{H_{0}}(\mathbf{x}\mathbf{\delta} \neq\mathbf{0})>0$

\item[A9] (Identification condition). If $\mathbf{\gamma}\neq\mathbf{\gamma
}^{\ast}$ for all $\alpha$ we have $\Sigma(1,\mathbf{\gamma}) \neq
\Sigma(\alpha,\mathbf{\gamma}^{\ast})$.
\end{description}

An important family of distributions satisfying A7 is the multivariate normal,
in this case,
\begin{equation}
f_{0}^{\ast}(z)=(2\pi)^{-p/2}\exp(-z/2). \label{dennor}%
\end{equation}
Note that when the $(\mathbf{y}_{i},\mathbf{x}_{i})$s satisfy (\ref{mu}),
(\ref{cov}) and (\ref{cond}), A7 is satisfied. The following Theorem states
the consistency of composite $\tau$-estimators.

\begin{theorem}
\label{teo:consistency} Let $\mathbf{T}$ $=(\mathbf{t}_{1},\ldots
,\mathbf{t}_{n})$, $\mathbf{t}_{i}=(\mathbf{y}_{i},\mathbf{x}_{i})$, $1\leq
i\leq n$, be i.i.d random vectors with distribution $F_{0}$ and call $H_{0}$
the marginal distribution of the $\mathbf{x}_{i}s.$ Assume (i) $\rho_{1}$
satisfies (A1-A5), (ii) $\rho_{2}$ satisfies A1-A6, (iii) under $F_{0}$ A7 and
A8 holds and (iv) A9. Then, the composite $\tau$-estimators $\widehat
{\mathbf{\beta}}$, $\widehat{\mathbf{\gamma}}$ and $\widehat{\eta}$ satisfy
$\lim_{n\rightarrow\infty}\widehat{\mathbf{\beta}}=\mathbf{\beta}_{0}$ (a.s.),
$\lim_{n\rightarrow\infty}\widehat{\mathbf{\gamma}}=\mathbf{\gamma}_{0}$
(a.s.). Moreover, if $f_{0}^{\ast}$ is given by (\ref{dennor}) we also have
$\lim_{n\rightarrow\infty}\widehat{\eta}=\eta_{0}$ (a.s.).
\end{theorem}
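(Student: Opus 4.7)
The plan is to follow the classical M-estimator consistency roadmap: define a deterministic population analogue of the criterion $T(\mathbf{\beta},\mathbf{\gamma})$, prove that the sample criterion converges to it uniformly on compacta a.s., show that the limit has a unique global minimizer at the true parameters, and combine these with a tightness argument so the argmin of the sample criterion converges to the argmin of the limit. The statement about $\widehat{\eta}$ is then a relatively short corollary obtained by evaluating the defining equation \eqref{equ:eta0} at the (already consistent) $(\widehat{\mathbf{\beta}},\widehat{\mathbf{\gamma}})$.

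For the first two steps, I would define, for each pair $(j,l)$, the population M-scale $s_{jl}^\infty(\mathbf{\beta},\mathbf{\gamma})$ as the unique solution of $E_{F_0}\rho_{1}(m^{jl}/s_{jl}^\infty)=b$, then $\tau_{jl}^\infty=s_{jl}^\infty\,E_{F_0}\rho_{2}(m^{jl}/s_{jl}^\infty)$, and $T^\infty=\sum_{j<l}\tau_{jl}^\infty$. Assumptions A1--A5 on $\rho_1$ and A1--A6 on $\rho_2$, together with the continuity of $m^{jl}$ in $(\mathbf{\beta},\mathbf{\gamma})$, guarantee that both the sample and population M-scales are well defined and continuous. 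A standard Glivenko--Cantelli argument (bounded continuous $\rho_i$'s, i.i.d. data, compactness) yields $\sup_{K}|s_{jl}-s_{jl}^\infty|\to 0$ a.s., which transfers directly to $\sup_{K}|T-T^\infty|\to 0$ a.s.\ on any compact $K\subset\mathbb{R}^{k}\times\Gamma$. Tightness of $(\widehat{\mathbf{\beta}},\widehat{\mathbf{\gamma}})$ is then shown by verifying that $T(\mathbf{\beta},\mathbf{\gamma})\to\infty$ as $\|\mathbf{\beta}\|\to\infty$ or as $\mathbf{\gamma}$ approaches the boundary of $\Gamma$; this uses A8, which (since $\mathbf{u}$ is independent of $\mathbf{x}$ and has a density) forces a positive fraction of the residuals to diverge along any such sequence, so at least one of the $s_{jl}$ diverges.

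The core step is the Fisher-consistency argument: for each pair $(j,l)$, $\mathbf{y}_{i}^{jl}-\mathbf{x}_{i}^{jl}\mathbf{\beta}_0$ is the 2-dimensional marginal of an elliptical vector with shape matrix proportional to $\Sigma_{jl}(1,\mathbf{\gamma}_0)$ and is independent of $\mathbf{x}_i$ by A7. The results of Yohai and Zamar (1988), adapted to the $\tau$-scale of pairwise Mahalanobis distances in the Davies-type multivariate setting, then give that $\tau_{jl}^\infty(\mathbf{\beta},\mathbf{\gamma})$ is minimized exactly when $\mathbf{\beta}=\mathbf{\beta}_0$ (using A8 to rule out the degenerate direction $\mathbf{x}(\mathbf{\beta}-\mathbf{\beta}_0)=\mathbf{0}$) and $\Sigma_{jl}^{\ast}(\mathbf{\gamma})\propto\Sigma_{jl}(1,\mathbf{\gamma}_0)$. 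Summing over all $\binom{p}{2}$ pairs, the minimizers of $T^\infty$ satisfy $\Sigma_{jl}(1,\mathbf{\gamma})=c_{jl}\,\Sigma_{jl}(1,\mathbf{\gamma}_0)$ for every pair; matching diagonal entries forces the constants $c_{jl}$ to reduce to a single scalar $c$, so $\Sigma(1,\mathbf{\gamma})=c\,\Sigma(1,\mathbf{\gamma}_0)=\Sigma(c,\mathbf{\gamma}_0)$, and the identifiability assumption A9 yields $\mathbf{\gamma}=\mathbf{\gamma}_0$. Combining uniform convergence, tightness, and unique minimization gives $(\widehat{\mathbf{\beta}},\widehat{\mathbf{\gamma}})\to(\mathbf{\beta}_0,\mathbf{\gamma}_0)$ a.s. The $\widehat{\eta}$ part then follows by plugging into \eqref{equ:eta0}: under normality the pairwise squared Mahalanobis distances at the true parameters are distributed $\eta_0\chi_2^2$, so the LLN together with the definition $E_{\chi_2^2}\rho_1(v)=b$ and continuity in $(\mathbf{\beta},\mathbf{\gamma})$ forces $\widehat{\eta}\to\eta_0$.

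I expect the main obstacle to be the joint uniqueness step, specifically the passage from pairwise consistency (which only pins down each $\Sigma_{jl}(1,\mathbf{\gamma})$ up to a pair-specific scalar) to global identification of $\mathbf{\gamma}$. One must argue carefully that the pair-specific constants collapse to a single constant before invoking A9, and that the strict inequality for $\mathbf{\beta}\neq\mathbf{\beta}_0$ is strict at the population level (not merely non-strict), which in turn requires the strict monotonicity of $f_0^{\ast}$ near $0$ assumed in A7 and the non-degeneracy in A8. A secondary technical point is uniform convergence over unbounded parameter sets combined with the coercivity estimate needed for tightness near the boundary of $\Gamma$, where $|\Sigma(1,\mathbf{\gamma})|\to 0$ and the normalization in $\Sigma^{\ast}$ becomes delicate.
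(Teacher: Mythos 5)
Your core argument coincides with the paper's rigorous part. The paper only proves formally the Fisher consistency of the population functional (Theorem \ref{theorem:fisher} in Appendix \ref{app:asymptotic}), via exactly the chain you describe: an inequality for elliptical distributions (Lemma \ref{lemma:one}, taken from Lemma A.10 of \citet{garciaben_martinez_yohai_2006}) gives $\tau_{jl}^{a}(\mathbf{\beta},\mathbf{\gamma})\geq\tau_{jl}^{a}(\mathbf{\beta}_{0},\mathbf{\gamma}_{0})$ with strictness unless $P(\mathbf{x}^{jl}(\mathbf{\beta}-\mathbf{\beta}_{0})\neq\mathbf{0})=0$ and $\Sigma_{jl}(1,\mathbf{\gamma})=\alpha_{jl}\Sigma_{jl}(1,\mathbf{\gamma}_{0})$; the pair-specific constants $\alpha_{jl}$ are then collapsed to a single $\alpha$ by matching shared entries before invoking A9, and A8 with a union bound handles $\mathbf{\beta}$ --- precisely your ``matching diagonal entries'' step. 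Your $\varepsilon$-sandwich for $\widehat{\eta}$ is also the paper's. Where you genuinely differ is the passage from population to sample: the paper explicitly declines to prove Theorem \ref{teo:consistency} formally and gives only a heuristic, asserting without proof that the functional $(\mathbf{B}(F),\mathbf{G}(F))$ is continuous at $F_{0}$ under weak convergence; you propose instead the standard uniform-convergence-on-compacta-plus-tightness route, which is more explicit but requires work the paper does not do. One specific claim in your tightness step is wrong as stated: under A8 alone, $P(\mathbf{x}^{jl}\mathbf{b}\neq\mathbf{0})$ may well be smaller than $b=0.5$, and then the population M-scale $s_{jl}^{a}$ does \emph{not} diverge as $\Vert\mathbf{\beta}\Vert\rightarrow\infty$, since the defining equation $E\rho_{1}(m/s)=b$ retains a finite solution whenever the fraction of diverging Mahalanobis distances is below $b$. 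What you actually need --- and what does follow from the strict inequalities of Lemma \ref{lemma:three} together with compactness of the set of directions --- is that $\inf_{\mathbf{\gamma}}T^{a}(\mathbf{\beta},\mathbf{\gamma},F_{0})$ stays bounded away from $T^{a}(\mathbf{\beta}_{0},\mathbf{\gamma}_{0},F_{0})$ outside every neighborhood of $\mathbf{\beta}_{0}$; coercivity to $+\infty$ is neither needed nor generally true here. With that repair, your plan is a legitimate (and more complete) route to the result the paper states.
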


Note that for the consistency of $\widehat{\mathbf{\beta}}$ and $\widehat
{\mathbf{\gamma}}$ is not necessary that $\mathbf{y}_{i}|\mathbf{x}_{i}$ be
multivariate normal. We do not give a formal proof of this Theorem. In Theorem
\ref{theorem:fisher} of the Appendix \ref{app:asymptotic} we give a rigorous
proof of the Fisher consistency of the estimating functional associated to the
compose $\tau$-estimator. From this result we derive an heuristic proof of
Theorem \ref{teo:consistency}.

The following Theorem states the asymptotic normality of composite $\tau
$-estimators. We need the following assumptions

\begin{description}
\item[A10] Let $H_{0}$ be the distribution of $\mathbf{x}$. Then $H_{0}$ has
finite second moments and $E_{H_{0}}(\mathbf{x} \mathbf{x}^{\top})$ is non--singular.

\item[A11] The functions $\rho_{i}$, $i=1,2$ are twice differentiable.
\end{description}

\begin{theorem}
\label{teo:normality} Let $\mathbf{\lambda}=(\mathbf{\beta}^{\top
},\mathbf{\gamma}^{\top})^{\top}$ and $\widehat{\mathbf{\lambda}}%
=(\widehat{\mathbf{\beta}}^{\top},\widehat{\mathbf{\gamma}}^{\top})^{\top}$ be
the composite $\tau$-estimator. Consider the same assumptions as in Theorem
\ref{teo:consistency}, A10 and A11. Then, we have
\begin{equation*}
\sqrt{n}(\widehat{\mathbf{\lambda}}-\mathbf{\lambda})\overset{D}{\rightarrow
}N\left(  \mathbf{0},\Sigma_{\mathbf{\lambda}}\right)  ,
\end{equation*}
where
\begin{equation*}
\Sigma_{\mathbf{\lambda}}=E\left[  \nabla_{\mathbf{\lambda}}^{2}%
T(\mathbf{\lambda})\right]  ^{-1}E\left[  \nabla_{\mathbf{\lambda}%
}T(\mathbf{\lambda})\text{ }\nabla_{\mathbf{\lambda}}T(\mathbf{\lambda
})^{\text{T}}\right]  \left(  E\left[  \nabla_{\mathbf{\lambda}}%
^{2}T(\mathbf{\lambda})\right]  ^{-1}\right)  ^{\text{T}},
\end{equation*}
and $\nabla_{\mathbf{\lambda}}T(\mathbf{\lambda})$ and $\nabla
_{\mathbf{\lambda}}^{2}T(\mathbf{\lambda})$ are the gradient and Hessian
matrix of $T(\mathbf{\lambda})$ respectively.
\end{theorem}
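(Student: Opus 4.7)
The plan is to follow the classical M-estimator route for asymptotic normality: exploit the first-order optimality condition at $\widehat{\mathbf{\lambda}}$, perform a mean-value expansion around $\mathbf{\lambda}_0$, apply a CLT to the normalized score, apply a uniform LLN to the Hessian, and combine via Slutsky's theorem. By Theorem \ref{teo:consistency}, $\widehat{\mathbf{\lambda}}\to\mathbf{\lambda}_0$ a.s. Under A11, $T$ is twice continuously differentiable in a neighborhood of $\mathbf{\lambda}_0$, so with probability tending to one $\nabla_{\mathbf{\lambda}}T(\widehat{\mathbf{\lambda}})=\mathbf{0}$. The mean-value expansion
\begin{equation*}
\mathbf{0}=\nabla T(\widehat{\mathbf{\lambda}})=\nabla T(\mathbf{\lambda}_0)+[\nabla^2 T(\widetilde{\mathbf{\lambda}})]\,(\widehat{\mathbf{\lambda}}-\mathbf{\lambda}_0),
\end{equation*}
with $\widetilde{\mathbf{\lambda}}$ lying on the segment joining $\widehat{\mathbf{\lambda}}$ and $\mathbf{\lambda}_0$, gives $\sqrt{n}(\widehat{\mathbf{\lambda}}-\mathbf{\lambda}_0)=-[\nabla^2 T(\widetilde{\mathbf{\lambda}})]^{-1}\sqrt{n}\,\nabla T(\mathbf{\lambda}_0)$, which is the skeleton of the argument.

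The first nontrivial step is producing an explicit i.i.d.\ representation for the score. Each pairwise $\tau$-scale $\tau_{jl}$ depends on the M-scale $s_{jl}$ defined implicitly by the equation $\frac{1}{n}\sum_i\rho_1(m_i^{jl}/s_{jl})=b$. Implicit differentiation yields
\begin{equation*}
\nabla s_{jl}=\frac{\sum_i \psi_1(m_i^{jl}/s_{jl})\,\nabla_{\mathbf{\lambda}}m_i^{jl}}{\sum_i \psi_1(m_i^{jl}/s_{jl})\,m_i^{jl}/s_{jl}},
\end{equation*}
and substituting this into the product rule for $\tau_{jl}=s_{jl}\cdot \frac{1}{n}\sum_i \rho_2(m_i^{jl}/s_{jl})$ collapses, via the Yohai--Zamar (1988) $\tau$-identity, to the average form $\nabla \tau_{jl}(\mathbf{\lambda})=\frac{1}{n}\sum_i \Psi^{(jl)}(\mathbf{y}_i,\mathbf{x}_i;\mathbf{\lambda})$ for a bounded $\Psi^{(jl)}$ built from $\psi_1,\psi_2,\nabla m_i^{jl}$. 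Summing over pairs, $\nabla T(\mathbf{\lambda}_0)=\frac{1}{n}\sum_i \Psi(\mathbf{y}_i,\mathbf{x}_i;\mathbf{\lambda}_0)$. Fisher consistency of the functional (Theorem \ref{theorem:fisher} of the Appendix) gives $E_{F_0}[\Psi(\mathbf{y},\mathbf{x};\mathbf{\lambda}_0)]=\mathbf{0}$; boundedness of the $\psi_i$ together with A7 and A10 yields finite second moments of $\Psi$. The multivariate CLT then produces $\sqrt{n}\,\nabla T(\mathbf{\lambda}_0)\overset{D}{\to}N(\mathbf{0},\,E[\nabla_{\mathbf{\lambda}}T\,\nabla_{\mathbf{\lambda}}T^{\top}])$.

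For the Hessian, the strong consistency gives $\widetilde{\mathbf{\lambda}}\to\mathbf{\lambda}_0$ a.s., and a uniform LLN over a compact neighborhood of $\mathbf{\lambda}_0$ — justified by A11, the boundedness of $\rho_i,\psi_i,\psi_i'$, finite second moments of $\mathbf{x}$ from A10, and the elliptical error structure A7 — delivers $\nabla^2 T(\widetilde{\mathbf{\lambda}})\to E[\nabla^2 T(\mathbf{\lambda}_0)]$ a.s. Nonsingularity of this limit follows from A8, A9 and A10 by the same strict convexity argument that underlies identification in Theorem \ref{theorem:fisher}. Applying Slutsky's theorem yields the stated sandwich covariance $\Sigma_{\mathbf{\lambda}}$.

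The main obstacle will be the Hessian step. Because $s_{jl}$ is only implicitly defined, $\nabla^2 T$ involves $\nabla^2 s_{jl}$ together with outer products $\nabla s_{jl}\,\nabla s_{jl}^{\top}$, and in addition $\widetilde{\mathbf{\lambda}}$ is a random point near $\mathbf{\lambda}_0$, so one needs a genuinely uniform LLN rather than pointwise consistency. Verifying the Glivenko--Cantelli type envelope required (so that the implicit derivatives remain integrable and continuous in $\mathbf{\lambda}$ uniformly on a shrinking neighborhood) is the most delicate part; once that uniform control is in place, all the remaining pieces — CLT, nonsingularity, Slutsky — are standard.
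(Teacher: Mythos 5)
The paper does not actually prove Theorem \ref{teo:normality}: it only remarks that the result follows by ``standard delta method arguments'' as in Theorem 10.9 of \citet{maronna_martin_yohai_2006}. Your outline --- first-order condition at the minimizer, mean-value expansion, CLT for the normalized score, uniform LLN for the Hessian at the intermediate point, Slutsky --- is exactly that standard M-estimation argument, so in approach you coincide with what the authors intend. One concrete gap worth naming, which is at least as delicate as the Hessian step you single out: the summands in $\nabla_{\mathbf{\lambda}}\tau_{jl}(\mathbf{\lambda}_{0})$ are \emph{not} functions of $(\mathbf{y}_{i},\mathbf{x}_{i})$ alone, because (see the Appendix expression for $T_{\mathbf{\beta}}$) each term contains the sample M-scale $s_{jl}(\mathbf{\lambda}_{0})$ and the sample averages $A_{2,jl}$ and $B_{2,jl}$, all of which depend on the whole sample. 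Before invoking the multivariate CLT you must replace these by their population counterparts (the asymptotic scale $s_{jl}^{a}$ and the corresponding expectations) and show that the substitution perturbs the normalized score only by $o_{p}(n^{-1/2})$; this extra linearization is the standard step in $\tau$-estimator asymptotics going back to \citet{yohai_zamar_1988} and should be made explicit rather than absorbed into the phrase that the product rule ``collapses via the $\tau$-identity.'' With that addition your sketch is a faithful rendering of the proof the paper omits.
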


We do not give the proof of Theorem \ref{teo:normality}. However, it can be
obtained using standard delta method arguments, see for example Theorem 10.9
in \citet{maronna_martin_yohai_2006}. This Theorem allows to define Wald tests
for null hypothesis and confidence intervals for $\mathbf{\beta}$ and
$\mathbf{\gamma}$, but not for $\eta$. However in most practical applications
the interest is centered in $\mathbf{\beta}$ and $\mathbf{\gamma}$.

\section{Computational aspects \label{sec:computation}}

The composite $\tau$-estimators are obtained by an iterative algorithm.
Hereafter we provide some details. Given starting values $\widetilde
{\mathbf{\beta}}^{(0)}$ and $\widetilde{\mathbf{\gamma}}^{(0)}$, we perform
iterations on steps \textbf{(A)-(C)} until convergence. Suppose that we have
already computed $\widetilde{\mathbf{\beta}}^{(h)}$ and $\widetilde
{\mathbf{\gamma}}^{(h)},$ then we performed the following steps \textbf{A},
\textbf{B} and \textbf{C} to obtain $\widetilde{\mathbf{\beta}}^{(h+1)}$ and
$\widetilde{\mathbf{\gamma}}^{(h+1)}$:

\begin{description}
\item[(A)] Find scales $s_{jl}(\widetilde{\mathbf{\beta}}^{(h)},\widetilde
{\mathbf{\gamma}}^{(h)})$ ($1\leq j<l\leq p$) by solving equations
\begin{equation*}
\frac{1}{n}\sum_{i=1}^{n}\rho_{1}\left(  \frac{m_{i}^{jl}(\widetilde
{\mathbf{\beta}}^{(h)},\widetilde{\mathbf{\gamma}}^{(h)})}{s_{jl}%
(\widetilde{\mathbf{\beta}}^{(h)},\widetilde{\mathbf{\gamma}}^{(h)})}\right)
=b
\end{equation*}

\item[(B)] Update $\mathbf{\beta}$ by the fixed point equation using equation
(\ref{equ:Tbeta}) derived in Appendix \ref{sec:equationsBeta}. That is,
\begin{equation*}
\widetilde{\mathbf{\beta}}^{(h+1)}=\left[  \sum_{i=1}^{n}\sum_{j=1}^{p-1}%
\sum_{l=j+1}^{p}\tilde{W}_{i}^{jl}(\widetilde{\mathbf{\beta}}^{(h)}%
,\widetilde{\mathbf{\gamma}}^{(h)})\left(  \dot{\mathbf{x}}_{i}^{jl\top}%
\dot{\mathbf{x}}_{i}^{jl}\right)  \right]  ^{-1}\sum_{i=1}^{n}\sum_{j=1}%
^{p-1}\sum_{l=j+1}^{p}\tilde{W}_{i}^{jl}(\widetilde{\mathbf{\beta}}%
^{(h)},\widetilde{\mathbf{\gamma}}^{(h)})\left(  \dot{\mathbf{x}}_{i}^{jl\top
}\dot{\mathbf{y}}_{i}^{jl}\right)  ,
\end{equation*}
where $\dot{\mathbf{x}}_{i}^{jl}=\Sigma_{jl}^{\ast-1/2}\mathbf{x}_{i}^{jl}.$

\item[(C)] A fixed point equation for $\mathbf{\gamma}$ can be derived from
the estimating equation (\ref{equ:Tgamma}). However we found that to use this
equation to update $\mathbf{\gamma}$ was numerically unstable. We preferred to
make this updating by means of a direct minimization of the goal function $T$
defined in (\ref{goal-tau}), that is, we define $\widetilde{\mathbf{\gamma}%
}^{(h+1)}$ by
\begin{equation*}
\widetilde{\mathbf{\gamma}}^{(h+1)}=\arg\min_{\mathbf{\gamma}} T(\widetilde
{\mathbf{\beta}}^{(h+1)},\mathbf{\gamma}).
\end{equation*}
For this purpose, in the example of Section \ref{sec:examples} and in the
Monte Carlo study of Section \ref{sec:simulations} we used the function
\texttt{optim} of the \texttt{R} program.

\item[(D)] Once the convergence criterion for $(\widetilde{\mathbf{\beta}%
}^{(h)},\mathbf{\gamma}^{(h)})$ is reached, the estimator of $\eta$ is
obtained by solving the equation (\ref{equ:eta0}).
\end{description}

To start the iterative algorithm the initial estimators $\widetilde
{\mathbf{\beta}}^{(0)}$ and $\widetilde{\mathbf{\gamma}}^{(0)}$ are necessary.
Let $\mathbf{Y}=(\mathbf{y}_{1}^{\top},\ldots,\mathbf{y}_{n}^{\top})^{\top}$,
$\mathbf{x}_{i}^{(j)}$ the $j$-th column of $\mathbf{x}_{i},$ $\mathbf{X}%
_{j}=(\mathbf{x}_{1}^{(j)\top},\ldots,\mathbf{x}_{n}^{(j)\top})^{\top}$,
$1\leq j\leq k$. Then, $\widetilde{\mathbf{\beta}}^{(0)}$can be obtained by
means of robust regression estimator using $\mathbf{Y}$ as response and
$\mathbf{X}_{j}$, $1\leq j\leq k$ as covariables. In our algorithm we use an
MM-estimator as implemented in function \texttt{lmRob} of the \texttt{R} package
\texttt{robust} using an efficiency of $0.85$. Once this initial estimator
$\widetilde{\mathbf{\beta}}^{(0)}$ is computed, the residuals $\mathbf{r}%
_{i}=\mathbf{y}_{i}-\mathbf{x}_{i}\widetilde{\mathbf{\beta}}^{(0)}$
$(i=1,\ldots,n)$ can be evaluated. Then, a robust covariance matrix of
$\mathbf{u}$ robust under the ICM model is obtained applying to these
residuals  the estimator  presented in
\citet{agostinelli_leung_yohai_zamar_2014} based on filtering and
S-estimators with missing observations. Call $\widetilde{\mathbf{\Sigma}}$ to
this matrix and let $\mathbf{t}$ be the vector of the $p(p+1)/2$ values of the
lower triangular side of this matrix including the diagonal elements. In a
similar way, let $\mathbf{v}_{j}$ be the column vector of the $p(p+1)/2\times
1$ values of the lower triangular side of the matrix $V_{j}$. An initial
estimator $\widetilde{\mathbf{\gamma}}^{(0)}$ of $\mathbf{\gamma}$ could be
obtained by means of a regression estimator using $\mathbf{t}$ as response and
$\mathbf{v}_{1},\ldots,\mathbf{v}_{J}$ as covariables. Since neither
$\mathbf{t}$ nor $\mathbf{v}_{1},\ldots,\mathbf{v}_{J}$ need to have outliers,
it is not necessary to use a robust estimator, in fact we use function
\texttt{lm} of \texttt{R} to perform this step.

\section{Example \label{sec:examples}}

Hereafter we present one application of the introduced method on a real
data set. The example is a prospective longitudinal study of
children with disorder of neural development. In this data set, outliers are
present in the couples rather than in the units and the composite $\tau$-estimator provides a different analysis with respect to maximum likelihood
and classical robust procedures.

\subsection{Autism \label{sec:autism}}

The data used in this example were collected by researchers at the University
of Michigan \citep{anderson_oti_lord_welch_2009} as part of a prospective
longitudinal study of $214$ children and they are analyzed, among others, also
in \citet{west_welch_galechi_2007}. The children were divided into three
diagnostic groups when they were $2$ years old: autism, pervasive
developmental disorder (PDD), and nonspectrum children. The study was designed
to collect information on each child at ages $2$, $3$, $5$, $9$, and $13$
years, although not all children were measured at each age. One of the study
objectives was to assess the relative influence of the initial diagnostic
category (autism or PDD), language proficiency at age $2$, and other
covariates on the developmental trajectories of the socialization of these
children. Study participants were children who had consecutive referrals to
one of two autism clinics before the age of $3$ years. Social development was
assessed at each age using the Vineland Adaptive Behavior Interview survey
form, a parent-reported measure of socialization. The dependent variable, vsae
(Vineland Socialization Age Equivalent), was a combined score that included
assessments of interpersonal relationships, play/leisure time activities, and
coping skills. Initial language development was assessed using the Sequenced
Inventory of Communication Development (SICD) scale; children were placed into
one of three groups (sicdegp, $s_{(1)},s_{(2)},s_{(3)}$, where $s_{(k)}$ is
the indicator function of the $k$ group) based on their initial SICD scores on
the expressive language subscale at age $2$. We consider the subset of $n=41$
children for which all measurements are available. We analyze this data using
a regression model with random coefficients where vsae is explained by
intercept, age, age$^{2}$ and sicdegp as a factor variable plus interaction
among the age related variables and sicdegp. Hereafter, the variable age is
shifted by $2$. Let $y_{ij}$ be the value of the $i$-th vsae for the $j$-th
ages value $a_{j}$, then it is assumed that for $1\leq i\leq41,$ $1\leq
j\leq5$ we have%
\begin{align*}
y_{ij}  &  =b_{i1}+b_{i2}a_{j}+b_{i3}a_{j}^{2}\\
&  +\beta_{4}s_{(1)i}+\beta_{5}s_{(2)i}\\
&  +\beta_{6}a_{j}\times s_{(1)i}+\beta_{7}a_{j}\times s_{(2)i}+\beta_{8}%
a_{j}^{2}\times s_{(1)i}+\beta_{9}a_{j}^{2}\times s_{(2)i}+\varepsilon_{ij},
\end{align*}
where $(b_{i1},b_{i2},b_{i3})$ are i.i.d. random coefficients with mean
$(\beta_{1},\beta_{2},\beta_{3})$ and covariance matrix
\begin{equation*}
\Sigma_{b}=\left(
\begin{array}
[c]{ccc}%
\sigma_{11} & \sigma_{1a} & \sigma_{1a^{2}}\\
\sigma_{1a} & \sigma_{aa} & \sigma_{aa^{2}}\\
\sigma_{1a^{2}} & \sigma_{aa^{2}} & \sigma_{a^{2}a^{2}}\\
&  &
\end{array}
\right)  .
\end{equation*}
$\beta_{4},\ldots,\beta_{9}$ are fixed coefficients and the $\varepsilon_{ij}$
are i.i.d. random errors independent of the random coefficients with zero mean
and variance $\sigma_{\varepsilon\varepsilon}$. Then, the model could be
rewritten in term of (\ref{mu}) and (\ref{cov}) with $p=5$, $n=41$, $J=6$ and
$k=9$, $\mathbf{y}_{i}=(y_{i1}, \ldots,y_{i5})^{\top}$,
\begin{equation*}
\mathbf{x}_{i}=\left(
\begin{array}
[c]{ccccccccc}%
1 & a_{1} & a_{1}^{2} & s_{(1)i} & s_{(2)i} & a_{1}\ s_{(1)i} &
a_{1}\ s_{(2)i} & a_{1}^{2}\ s_{(1)i} & a_{1}^{2}\ s_{(2)i}\\
 \cdots &  \cdots &  \cdots &  \cdots &  \cdots &  \cdots &  \cdots &  \cdots &  \cdots\\
 \cdots &  \cdots &  \cdots &  \cdots &  \cdots &  \cdots &  \cdots &  \cdots &  \cdots\\
1 & a_{5} & a_{5}^{2} & s_{(1)i} & s_{(2)i} & a_{5}\ s_{(1)i} &
a_{5}\ s_{(2)i} & a_{5}^{2}\ s_{(1)i} & a_{5}^{2}\ s_{(2)i}
\end{array}
\right)  ,
\end{equation*}
while the variance and covariance structure $\Sigma(\eta,\mathbf{\gamma}%
)=\eta(I+\sum_{j=1}^{J}\gamma_{j}V_{j})$ is as follows. Let, $\mathbf{j}$ a
$5$-vector of ones, $\mathbf{a}=(a_{1},a_{2},a_{3},a_{4},a_{5})^{\top}$, which
corresponds to $\text{age}$ and $\mathbf{b}=\mathbf{a}^{2}$ which corresponds
to $\text{age}^{2}$. Then, we have $V_{1}=\mathbf{j}\mathbf{j}^{\top}$,
$V_{2}=\mathbf{a}\mathbf{a}^{\top}$, $V_{3}=\mathbf{b}\mathbf{b}^{\top}$,
$V_{4}=\mathbf{j}\mathbf{a}^{\top}+\mathbf{a}\mathbf{j}^{\top}$,
$V_{5}=\mathbf{j}\mathbf{b}^{\top}+\mathbf{b}\mathbf{j}^{\top}$ and
$V_{6}=\mathbf{a}\mathbf{b}^{\top}+\mathbf{b}\mathbf{a}^{\top}$. $\eta
=\sigma_{\varepsilon\varepsilon}$ is the scale of the error term, $\gamma
_{1}=\sigma_{11}/\sigma_{\varepsilon\varepsilon}$, $\gamma_{2}=\sigma
_{aa}/\sigma_{\varepsilon\varepsilon}$, $\gamma_{3}=\sigma_{a^{2}a^{2}}%
/\sigma_{\varepsilon\varepsilon}$, $\gamma_{4}=\sigma_{1a}/\sigma
_{\varepsilon\varepsilon}$, $\gamma_{5}=\sigma_{1a^{2}}/\sigma_{\varepsilon
\varepsilon}$ and $\gamma_{6}=\sigma_{aa^{2}}/\sigma_{\varepsilon\varepsilon}$.

\begin{table}[ptb]
\begin{center}
\resizebox{\textwidth}{!}{
\begin{tabular}{lrrrrrrrrr}
\hline
Method & Int. & $a$ & $a^{2}$ & $s_{(1)}$ & $s_{(2)}$ & $a \times s_{(1)}$ & $a \times s_{(2)}$ &  $a^2 \times s_{(1)}$ & $a^2 \times s_{(2)}$ \\ \hline
Max. Lik. &  12.847  &  6.851  & $-$0.062  & $-$5.245  & $-$2.154  &  $-$6.345  &  $-$4.512  &  0.133  &  0.236 \\
&  [0.000] & [0.000] &   [0.579] &   [0.041] &   [0.325] &    [0.000] &    [0.000] & [0.446] & [0.121] \\
Composite $\tau$ & 12.143 & 6.308 & $-$0.089 & $-$5.214 & $-$4.209 & $-$5.361 & $-$3.852 & 0.082 & 0.061 \\
& [0.000] & [0.000] & [0.329] & [0.000] & [0.012] & [0.000] & [0.001] & [0.578]& [0.677] \\
S Rocke & 10.934 & 7.162 & $-$0.107& $-$4.457& $-$0.108& $-$5.769& $-$4.995& 0.094 & 0.419 \\
& [0.000]  & [0.001]  & [0.666] & [0.049]  & [0.957] & [0.002] & [0.000] & [0.688]  & [0.011] \\
SMDM &   12.346&    6.020&   0.001 & $-$5.192 & $-$2.173 & $-$5.190 & $-$3.870 &   0.046 &   0.151 \\
& [0.000] & [0.000] & [0.992] & [0.010] & [0.213] & [0.000] & [0.000] & [0.781] & [0.300] \\
\hline
\end{tabular}}
\end{center}
\caption{Autism data set. Estimated fixed term parameters by different
methods. P-values are reported under squared parenthesis.}%
\label{tab:autism:fixed}%
\end{table}

Table \ref{tab:autism:fixed} report the estimators and the inference for the
fixed term parameters using different methods, while Table
\ref{tab:autism:random} reports the estimators of the random effect terms. ML,
S and SMDM provide similar results, while differences are present with
the composite $\tau$ method. Main differences are on the estimation of the
random effects terms, both in size (error variance component) and shape
(correlation components). Composite $\tau$ assign part of the total variance
to the random components while the other methods assign it to the error term. 
In fact, variances estimated by composite $\tau$ are
in general larger than that estimated by the other methods; composite $\tau$
suggests negative correlation between intercept and age, while ML, S and SMDM
suggest positive correlation. Composite $\tau$ provides small
estimates compared to the other methods for the error variance. 
These discrepancies reflects mainly
on the inference for the fixed term coefficients where the variable sicdegp is
significant using composite $\tau$ but is not using ML, S and SMDM procedures.
Interactions between age$^{2}$ and sicdegp is highly non significant using
composite $\tau$ and SMDM while it is somewhat significant using S.

To investigate more the reasons of differences between composite robust procedure and classic robust procedure results, we investigate cell, couple and row outliers. For a given dimension $1 \le q \le p$ we define as $q$-dimension outliers those $q$-dimension observations such that the corresponding squared Mahalanobis distance is greater than a quantile order $\alpha$ of a chi-square distribution with $q$ degree of freedom. In particular we call cell, couple and row outliers the $1$-dimension, $2$-dimension and $p$-dimension outliers respectively.
Composite $\tau$ procedure identifies $33$ couple outliers out of $410$ 
couples ($8\%$) at $\alpha=0.999$. The affected rows, with at least 
one couple outliers, are $12$ out of $41$. This means that the classic S 
and SMDM procedures have to deal with a data set with a level of 
contamination about $29\%$. We also run the analysis using the composite S 
estimator, not reported here, the results are similar to
those obtained by the composite $\tau$ estimator.

\begin{table}[ptb]
\begin{center}
\resizebox{\textwidth}{!}{
\begin{tabular}{lrrrrrrr}
\hline
Method & $\sigma_{11}$ & $\sigma_{aa}$ & $\sigma_{a^{2}a^{2}}$ & $\sigma_{1a} $ & $\sigma_{1a^{2}}$ & $\sigma_{aa^{2}}$ & $\sigma_{\varepsilon\varepsilon}$ \\ \hline
Maximum Likelihood & 2.643 & 2.328 & 0.102 & 0.775 & 0.429 & $-$0.038 & 51.360 \\
Composite $\tau$   & 9.362 & 9.670 & 0.052 & $-$4.019 & $-$0.002 & $-$0.327 & 5.164 \\
S Rocke            & 9.467 & 3.373 & 0.222 & 2.170 & 1.062 & $-$0.349 & 22.209 \\
SMDM    & 5.745 & 0.092 & 0.115 & 0.727 & 0.813 & 0.103 & 25.385 \\
\hline
\end{tabular}}
\end{center}
\caption{Autism data set. Estimated random term parameters by different
methods.}%
\label{tab:autism:random}%
\end{table}

\section{Monte Carlo simulations \label{sec:simulations}}

In this section we describe the results of a Monte Carlo study with the aim of
illustrating the performance of the new procedure in the classical
contamination Model (CCM) and in the independent contamination model (ICM). We
consider a 2-way crossed classification with interaction linear mixed model
\begin{equation}
y_{fgh}=\mathbf{x}_{fgh}^{\top}\mathbf{\beta}_{0}+a_{f}+b_{g}+c_{fg}+e_{fgh},
\end{equation}
where $f=1,\ldots,F$, $g=1,\ldots,G$, and $h=1,\ldots,H$. Here, we set $F=2$,
$G=2$ and $H=3$ which leads to $p=F\times G\times H=12$. $\mathbf{x}_{fgh}$ is
a $(k+1)\times1$ vector where the last $k$ components are from a standard
multivariate normal and the first component is identically equal to $1$,
$\mathbf{\beta}_{0}=(0,2,2,2,2,2)^{\top}$ is $(k+1)\times1$ vector of the
fixed parameters with $k=5$. $a_{f}$, $b_{g}$ and $c_{fg}$ are the random
effect parameters which are normally distributed with variances $\sigma
_{a}^{2}$, $\sigma_{b}^{2}$, and $\sigma_{c}^{2}$. Arranging the $y_{fgh}$ in
lexicon order (ordered by $h$ within $g$ within $f$) we obtain the vector
$\mathbf{y}$ of dimension $p$ and in the similar way the $p\times k$ matrix
$\mathbf{x}$ obtained arranging $\mathbf{x}_{fgh}$. Similarly, we let
$\mathbf{a}=(a_{1},\ldots,a_{F})^{\top}$, $\mathbf{b}=(b_{1},\ldots
,b_{F})^{\top}$ and $\mathbf{c}=(c_{11},\ldots,c_{FG})^{\top}$, that is,
$\mathbf{a}\sim N_{F}(\mathbf{0},\sigma_{a}^{2}I_{F})$ and similar for
$\mathbf{b}$ and $\mathbf{c}$, while $\mathbf{e}=(e_{111},\ldots
,e_{FGH})^{\top}\sim N_{p}(\mathbf{0},\sigma_{e}^{2}I_{p})$. Hence
$\mathbf{y}$ is a $p$ multivariate normal with mean $\mathbf{\mu
}=\mathbf{x\beta}$ and variance matrix $\Sigma_{0}=\eta_{0}(V_{0}+\sum
_{j=0}^{J}\gamma_{j}V_{j}),$ where $V_{0}=I_{p}$, $V_{1}=I_{F}\otimes
J_{G}\otimes J_{H}$, $V_{2}=J_{F}\otimes I_{G}\otimes J_{H}$, and $V_{3}%
=J_{F}\otimes J_{G}\otimes I_{H}$; $\otimes$ is the Kronecker product and $J$
is a matrix of ones with appropriate dimension. We took $\sigma_{e}^{2}%
=\sigma_{a}^{2}=\sigma_{b}^{2}=1$ and $\sigma_{c}^{2}=2.$ Then $\mathbf{\gamma
}_{0}=(\gamma_{10},\gamma_{20},\gamma_{30})^{\top}=(\sigma_{a}^{2}/\sigma
_{e}^{2},\sigma_{b}^{2}/\sigma_{e}^{2},\sigma_{c}^{2}/\sigma_{e}^{2})^{\top
}=(1/4,1/4,1/2)^{\top}$ and $\eta_{0}=\sigma_{e}^{2}=1/4$. We consider a
sample of size $n=100$ and four levels of contamination $\varepsilon=0,5,10$
and $15\%$. In the CCM $n\times\varepsilon$ observations are contaminated by
replacing all the elements of the vector $\mathbf{y}$ by observations from
$\mathbf{y}_{0}\sim N_{p}(\mathbf{x}_{0}\mathbf{\beta}_{0}+\mathbf{\omega}%
_{0},\Sigma)$ where the corresponding components of $\mathbf{x}$ are sampled
from $\mathbf{x}_{0}\sim N_{p\times k}(\mathbf{\lambda}_{0},0.005^{2}%
\mathbf{I}_{p\times k})$ with all the components of $\mathbf{\lambda}_{0}$
equal to $1$ in the case of low leverage outliers (lev1) or to $20$ for large
leverage outliers (lev20) and $\mathbf{\omega}_{0}$ is a $p$-vector of
constants all equal to $\omega_{0}$. In the ICM we replace $n\times
p\times\varepsilon$ cells, randomly chosen in the $n\times p=1200$ values of
the dependent variable by $\mathbf{y}_{0}$ and the corresponding $k$ vector of
the explanatory variables by $\mathbf{x}_{0}$ with values as in the previous
case. In each cases we move $\omega_{0}$ trying at attain the maximum MSE. For
each combination of these factors we run the S-estimator as described in
\citet{victoria-feser_high_2006} with $\rho$ function with asymptotic
rejection probability set to $0.01$. We compute the composite $\tau$-estimator
with $\rho_{1}$ and $\rho_{2}$ in the family given by (\ref{optimalsq}) with
constant $c$ equals to $1.64$. For each case we run $500$ Monte Carlo
replications. 

Let $(\mathbf{y},\mathbf{x})$ be an observation independent of the sample
$(\mathbf{y}_{1}, \mathbf{x}_{1}), \ldots,(\mathbf{y}_{n}, \mathbf{x}_{n})$ 
used to compute $\widehat{\mathbf{\beta}}$ and let
$\widehat{\mathbf{y}}=\mathbf{x} \widehat{\mathbf{\beta}}$ be the predicted
value of $\mathbf{y}$ using $\mathbf{x}.$ Then the square Mahalanobis
distance between $\widehat{\mathbf{y}}$ and $\mathbf{y}$ using the matrix
$\Sigma_{0}$ is
\begin{align*}
m(\widehat{\mathbf{y}}, \mathbf{y}, \Sigma_{0}) &  =(\widehat{\mathbf{y}%
}-\mathbf{y})^{\top} \Sigma_{0}^{-1} (\widehat{\mathbf{y}}-\mathbf{y}) \\
& =(\widehat{\mathbf{\beta}}-\mathbf{\beta}_{0})^{\top} \mathbf{x}^{\top} 
\Sigma_{0}^{-1} \mathbf{x} (\widehat{\mathbf{\beta}}-\mathbf{\beta}_{0}) \\
& + (\mathbf{y}-\mathbf{x} \mathbf{\beta}_{0})^{\top} \Sigma_{0}^{-1} 
(\mathbf{y}-\mathbf{x} \mathbf{\beta}_{0}).
\end{align*}
Since $\mathbf{y}-\mathbf{x}\mathbf{\beta}_{0}$ is independent of $\mathbf{x}$ and has
covariance matrix $\mathbf{\Sigma}_{0}$, putting $A = E(\mathbf{x}^{\top} \Sigma_{0}^{-1} \mathbf{x})$ 
we have
\begin{align*}
E \left[ m(\widehat{\mathbf{y}}, \mathbf{y}, \Sigma_{0}) \right] 
& = E\left[ (\widehat{\mathbf{\beta}} - \mathbf{\beta}_{0})^{\top} A (\widehat{\mathbf{\beta}} - \mathbf{\beta}_{0}) \right] \\ 
& + \operatorname{trace} (\Sigma_{0}^{-1} (\mathbf{y}- \mathbf{x} \mathbf{\beta}_{0})(\mathbf{y}- \mathbf{x} \mathbf{\beta}_{0})^{\top}) \\
& = E\left[ (\widehat{\mathbf{\beta}}-\mathbf{\beta}_{0})^{\top} A (\widehat{\mathbf{\beta}}-\mathbf{\beta}_{0})\right] + p.
\end{align*}
Then, to evaluate an estimator $\widehat{\mathbf{\beta}}$ of $\mathbf{\beta}$ 
by its prediction performance we can use
\begin{equation}
E\left[ m(\widehat{\mathbf{\beta}},\mathbf{\mathbf{\beta}}_{0},A) \right]
=E \left[ (\widehat{\mathbf{\beta}} - \mathbf{\beta}_{0})^{\top} A (\widehat{\mathbf{\beta}}-\mathbf{\beta}_{0})\right] .
\end{equation}
Let $N$ be the number of replications in the simulation study, and let
$\widehat{\mathbf{\beta}}_{j}$, $1\leq j\leq N$ be the value of
$\widehat{\mathbf{\beta}}$ at the $j$-th replication, then we can estimate
$E\left[  m(\widehat{\mathbf{\beta}},\mathbf{\mathbf{\beta}}_{0},A) \right]$
by the Mean Square Mahalanobis distance
\begin{equation*}
\text{MSMD} = \frac{1}{N} \sum_{j=1}^{N} m(\widehat{\mathbf{\beta}}_{j}, \mathbf{\beta}_{0},A).
\end{equation*}
It is easy to prove that as in this case $\mathbf{x}$ is a $p\times k$ matrix
where the cells are independent $N(0,1)$ random variables, then $A= \operatorname{trace}(\Sigma_{0}^{-1}) I_{k}$.

Given two covariance matrices $\Sigma_{1}$ and $\Sigma_{0},$ one way to
measure how close are $\Sigma_{1}$ and $\Sigma_{0}$ is by the
Kullback-Leibler divergence between two normal distributions with the same
mean and covariance matrices equal to $\Sigma_{1}$ and $\Sigma_{0}$ given by%
\begin{equation*}
\text{KLD}(\Sigma_{1},\Sigma_{0})=\text{trace}\left(  \Sigma_{1}\Sigma
^{-1}\right)  -\log\left(  \Sigma_{1}\Sigma_{0}^{-1}\right)  -p.
\end{equation*}
Since $(\eta_{0},\mathbf{\gamma}_{0})$ determines $\Sigma_{0}=\Sigma
(\eta_{0},\mathbf{\gamma}_{0}),$ that is, the covariance matrix of
$\mathbf{y}$ given $\mathbf{x}$, one way to measure the performance of an
estimator $(\widehat{\eta},\widehat{\mathbf{\gamma}})$ of $(\eta
_{0},\mathbf{\gamma}_{0})$ is by
\begin{equation*}
E\left[ \text{KLD}(\Sigma(\widehat{\eta},\widehat{\mathbf{\gamma}}),\mathbf{\Sigma}_{0})\right] .
\end{equation*}
Let $(\widehat{\eta}_{j},\widehat{\mathbf{\gamma}}_{j}),1\leq j\leq N$, be
the value of $(\widehat{\eta},\widehat{\mathbf{\gamma}})$ at the $j$-th
replication, then we can estimate $E\left[ \text{KLD}(\Sigma(\widehat{\eta}%
,\widehat{\mathbf{\gamma}}),\mathbf{\Sigma}_{0})\right]$ by the Mean
Kullback-Leibler Divergence
\begin{equation*}
\text{MKLD}=\frac{1}{N}\sum_{j=1}^{N} \text{KLD}(\Sigma(\widehat{\eta}_{j}, \widehat{\mathbf{\gamma}}_{j}),\mathbf{\Sigma}_{0}).
\end{equation*}

Table \ref{tab:efficiency} reports the relative efficiency of the classic S-
and composite $\tau$-estimators with respect to the maximum likelihood in
absence of contamination. The efficiency of estimators of $\mathbf{\beta
}_{0}$ will be measured for the MSMD ratio while the efficiency of an
estimator of $(\eta_{0},\mathbf{\gamma}_{0})$ by the MKLD ratio.
\begin{table}[ptb]
\begin{center}%
\begin{tabular}
[c]{lcc}\hline
Method & MSMD EFF. & MKLD EFF.\\\hline
S & 0.712 & 0.571\\
composite $\tau$ & 0.806 & 0.739\\\hline
\end{tabular}
\end{center}
\caption{Relative efficiency of S- and composite $\tau$- estimators}%
\label{tab:efficiency}%
\end{table}

We report the results under $10\%$ of both types outlier contamination:
classical and independent. Figure \ref{fig:mse-beta-paper10} reports the
behavior of the MSMD as a function of $\omega_{0}$ while Figure
\ref{fig:mse-sigma-paper10} reports the behavior of MKLD. For easy of
comparison, Table \ref{tab:max} reports the maximum values of MSMD and MKLD in
the range of the Monte Carlo setting. Since similar behavior is observed for
negative values of $\omega_{0}$, these results are not reported.

Similar behavior was observed for the case $5\%$ and $15\%$ which are not
reported here. The composite $\tau$-estimator is very competitive with the
classical S-estimator under the classical contamination model, in fact, in the
low leverage case (lev1) the maximum values of MSMD and MKLD of the
composite $\tau$-estimator are only slightly larger than those of the
S-estimator. Instead for the high leverage case (lev20) the values MSDM are of
essentially the same for both estimators, while the maximum value of MKLD
is smaller for the composite $\tau$-estimator. In the independent
contamination model the composite $\tau$-estimator clearly outperforms the
classical S-estimator. In fact, while the MSMD and MKLD of the composite
$\tau$-estimator are always bounded by a small value, the MSMD and MKLD of the
classical S always show an unbounded behavior.

\begin{figure}[ptb]
\begin{center}
\includegraphics[width=0.8\textwidth]{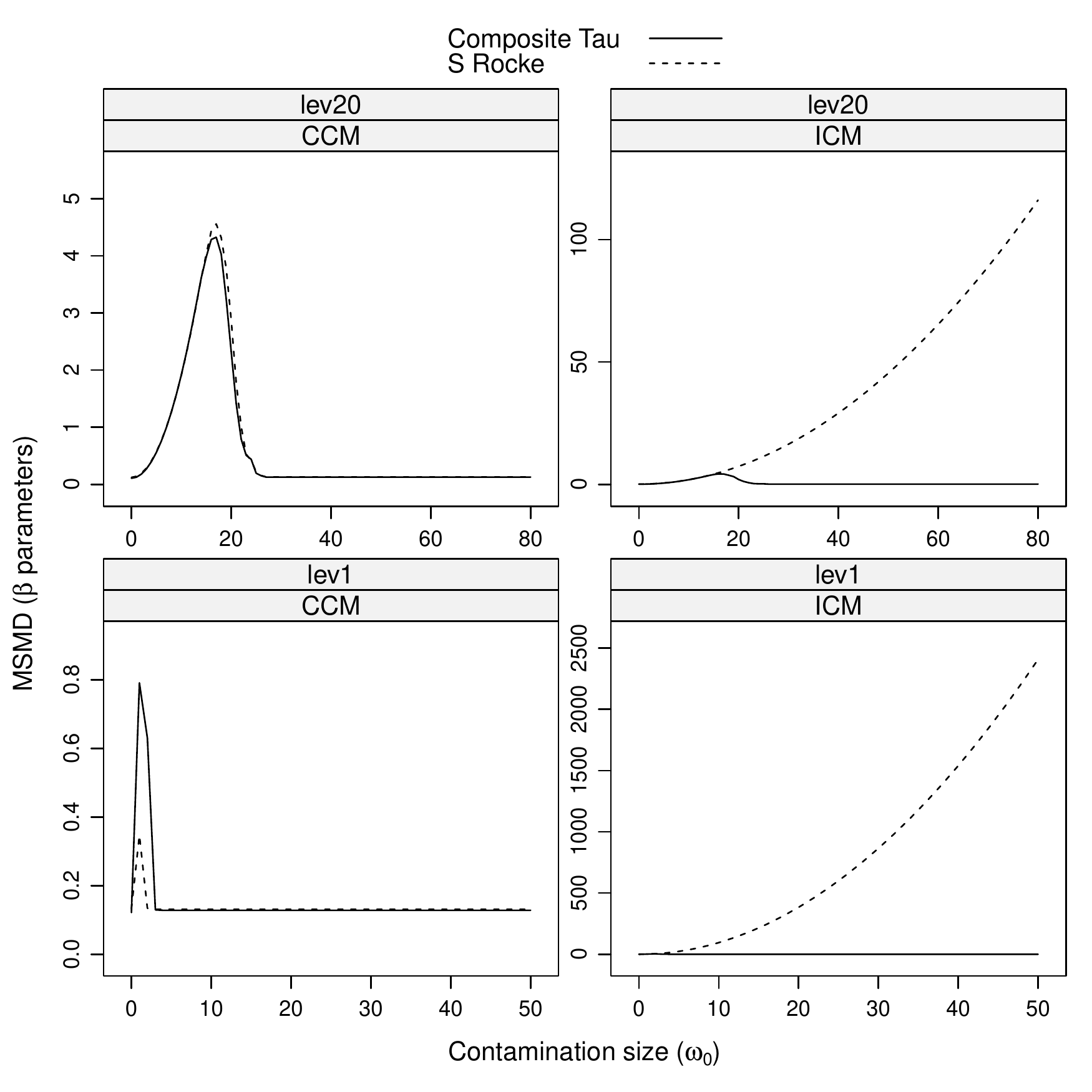}
\end{center}
\caption{MSMD performance of the S- and composite $\tau$-estimators of
$\mathbf{\beta_{0}}$ under 10\% of outlier contamination}%
\label{fig:mse-beta-paper10}%
\end{figure}

\begin{figure}[ptb]
\begin{center}
\includegraphics[width=0.8\textwidth]{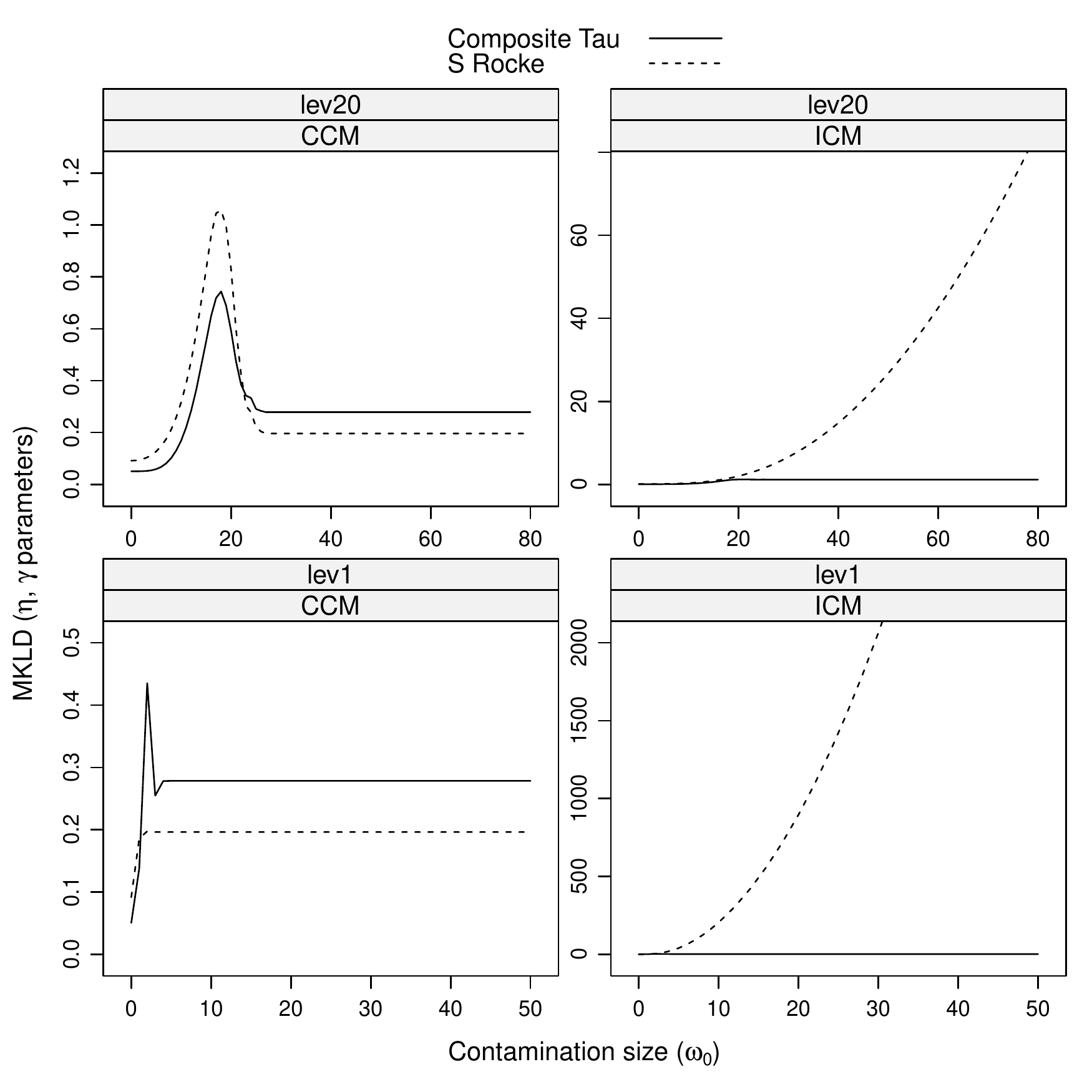}
\end{center}
\caption{MKLD performance of the S- and composite $\tau$-estimators of
$(\eta_{0},\mathbf{\gamma_{0}})$ under 10\% of outlier contamination }%
\label{fig:mse-sigma-paper10}%
\end{figure}

\begin{table}[ptb]
\begin{center}%
\begin{tabular}[c]{lcrr@{\extracolsep{15pt}}rr}\hline
&  & \multicolumn{2}{c}{CCM} & \multicolumn{2}{c}{ICM}\\
\cline{3-4} \cline{5-6} 
& Method & lev1 & lev20 & lev1 & lev20\\
\hline
MSDM & S & 0.347 & 4.558 & 2406.850 & 116.080 \\
& composite $\tau$ & 0.791 & 4.325 & 3.170 & 4.292 \\
\hline
MKLD & S & 0.197 & 1.057 & 5819.794 & 85.281 \\
& composite $\tau$ & 0.435 & 0.744 & 2.086 & 1.204 \\
\hline
\end{tabular}
\end{center}
\caption{Maximum values of MSDM and MKLD in Figures 1 and 2 respectively for
S- and composite $\tau$-estimators}%
\label{tab:max}%
\end{table}

\clearpage

\section{Conclusions \label{sec:conclusions}}

The independent contamination model presents new challenge problems for robust
statistics. Robust estimators developed for the classical Tukey-Huber
contamination model show non robust behavior under the ICM, in particular
their breakdown point converges to zero as the dimension $p$ increases.
Furthermore, affine equivariance, a proven asset for achieving CCM robustness,
becomes a hindrance under ICM because of outliers propagation. We introduce a
new class of robust estimators namely composite S-estimators and composite
$\tau$-estimators which are based on M and $\tau$-scales of the squared
Mahalanobis distances of two dimensional subvectors of $\mathbf{y}$ using the
same idea from the composite likelihood. We apply them in linear mixed models
estimation. Our methods are equivariant for some selected transformations
nevertheless provide fairly high resistance against both CCM and ICM outliers
with breakdown point $0.5$ and $0.25$ respectively.

An \texttt{R} \citep{Rsoftware} package \texttt{robustvarComp} is available in the Comprehensive R Archive Network at \\ \href{http://cran.r-project.org/web/packages/robustvarComp/index.html}{cran.r-project.org/web/packages/robustvarComp/index.html}. The package implements composite S and $\tau$-estimators and the classic S estimator for linear mixed models.

\section*{Acknowledgements} 

Victor Yohai research was partially supported by Grants PIP 112-200801-00216 
and 112-201101-00339 from CONICET, PICT 2011-397 from ANPCYT and W276 from 
the University of Buenos Aires, Argentina.

\appendix

\section{Estimating Equations and Algorithms}

In this Appendix we provide the derivative of the loss function
$T(\mathbf{\beta},\mathbf{\gamma})$ with respect to $\mathbf{\beta}$ and
$\mathbf{\gamma}$. For $\mathbf{\beta}$ a fixed point equation algorithm is
also presented.

\subsection{Derivation of the Estimating Equations for $\mathbf{\beta}$
\label{sec:equationsBeta}}

Hereafter, we are going to derive the expression of $T_{\mathbf{\beta}%
}(\mathbf{\beta},\mathbf{\gamma})=\sum_{n,j,l}\nabla_{\mathbf{\beta}}\tau
_{jl}(\mathbf{\beta},\mathbf{\gamma})$. For this aim, let $\dot{\mathbf{y}%
}_{i}^{jl}=\Sigma_{jl}^{\ast-1/2}\mathbf{y}_{i}^{jl}$ and $\dot{\mathbf{x}%
}_{i}^{jl}=\Sigma_{jl}^{\ast-1/2}\mathbf{x}_{i}^{jl}$ then
\begin{align*}
m_{i}^{jl}(\mathbf{\beta},\mathbf{\gamma})  &  =\left(  \mathbf{y}_{i}%
^{jl}-\mathbf{x}_{i}^{jl}\mathbf{\beta}\right)  ^{\top}\Sigma_{jl}^{\ast
-1}\left(  \mathbf{y}_{i}^{jl}-\mathbf{x}_{i}^{jl}\mathbf{\beta}\right) \\
&  =\left(  \dot{\mathbf{y}}_{i}^{jl}-\dot{\mathbf{x}}_{i}^{jl}\mathbf{\beta
}\right)  ^{\top}\left(  \dot{\mathbf{y}}_{i}^{jl}-\dot{\mathbf{x}}_{i}%
^{jl}\mathbf{\beta}\right)  .
\end{align*}
The derivative of the squared Mahalanobis distances is
\begin{align*}
\nabla_{\mathbf{\beta}}m_{i}^{jl}(\mathbf{\beta},\mathbf{\gamma})  &
=-2\dot{\mathbf{x}}_{i}^{jl\top}\left(  \dot{\mathbf{y}}_{i}^{jl}%
-\dot{\mathbf{x}}_{i}^{jl}\mathbf{\beta}\right) \\
&  =-2\left(  \dot{\mathbf{x}}_{i}^{jl\top}\dot{\mathbf{y}}_{i}^{jl}%
-\dot{\mathbf{x}}_{i}^{jl\top}\dot{\mathbf{x}}_{i}^{jl}\mathbf{\beta}\right)
\\
&  =-2\left(  \mathbf{x}_{i}^{jl\top}\Sigma_{jl}^{\ast-1}(\mathbf{\gamma
})\mathbf{y}_{i}^{jl}-\mathbf{x}_{i}^{jl\top}\Sigma_{jl}^{\ast-1}%
(\mathbf{\gamma})\mathbf{x}_{i}^{jl}\mathbf{\beta}\right)  .
\end{align*}
Let $W_{k}(x)=\rho_{k}^{\prime}(x)$ ($k=1,2$) and
\begin{equation*}
W_{k,i}^{jl}(\mathbf{\beta},\mathbf{\gamma})=W_{k}\left(  \frac{m_{i}%
^{jl}(\mathbf{\beta},\mathbf{\gamma})}{s_{jl}(\mathbf{\beta},\mathbf{\gamma}%
)}\right)
\end{equation*}
be a weight function. We compute the derivative of $s_{jl}(\mathbf{\beta
},\mathbf{\gamma})$ with respect to $\mathbf{\beta}$. We consider the
equality
\begin{equation*}
\frac{1}{n}\sum_{i=1}^{n}\rho_{1}\left(  \frac{m_{i}^{jl}(\mathbf{\beta
},\mathbf{\gamma})}{s_{jl}(\mathbf{\beta},\mathbf{\gamma})}\right)  =b
\end{equation*}
and we differentiate both sides
\begin{equation*}
\frac{1}{n}\sum_{i=1}^{n}\nabla_{\mathbf{\beta}}\rho_{1}\left(  \frac
{m_{i}^{jl}(\mathbf{\beta},\mathbf{\gamma})}{s_{jl}(\mathbf{\beta
},\mathbf{\gamma})}\right)  =0
\end{equation*}
which leads to the equation
\begin{equation*}
\frac{1}{n}\sum_{i=1}^{n}W_{1}\left(  \frac{m_{i}^{jl}(\mathbf{\beta
},\mathbf{\gamma} )}{s_{jl}(\mathbf{\beta},\mathbf{\gamma})}\right)
\frac{s_{jl}(\mathbf{\beta},\mathbf{\gamma})\nabla_{\mathbf{\beta}}m_{i}%
^{jl}(\mathbf{\beta},\mathbf{\gamma})-m_{i}^{jl}(\mathbf{\beta},\mathbf{\gamma
})\nabla_{\mathbf{\beta}}s_{jl}(\mathbf{\beta},\mathbf{\gamma})}{s_{jl}%
^{2}(\mathbf{\beta},\mathbf{\gamma})}=0,
\end{equation*}
and replacing the terms by the previous calculation, leads to the following
expression for $\nabla_{\mathbf{\beta}}s_{jl}(\mathbf{\beta},\mathbf{\gamma
})$
\begin{align*}
\nabla_{\mathbf{\beta}}s_{jl}(\mathbf{\beta},\mathbf{\gamma})  &
=\frac{-2\frac{1}{n}\sum_{i=1}^{n}W_{1,i}^{jl}(\mathbf{\beta},\mathbf{\gamma
})s_{jl}(\mathbf{\beta},\mathbf{\gamma})\left(  \dot{\mathbf{x}}_{i}^{jl\top
}\dot{\mathbf{y}}_{i}^{jl}-\dot{\mathbf{x}}_{i}^{jl\top}\dot{\mathbf{x}}%
_{i}^{jl}\mathbf{\beta}\right)  }{\frac{1}{n}\sum_{i=1}^{n}W_{1,i}%
^{jl}(\mathbf{\beta},\mathbf{\gamma})m_{i}^{jl}(\mathbf{\beta},\mathbf{\gamma
})}\\
&  =-2\frac{1}{n}\sum_{i=1}^{n}\dot{W}_{1,i}^{jl}(\mathbf{\beta}%
,\mathbf{\gamma})\left(  \dot{\mathbf{x}}_{i}^{jl\top}\dot{\mathbf{y}}%
_{i}^{jl}-\dot{\mathbf{x}}_{i}^{jl\top}\dot{\mathbf{x}}_{i}^{jl}\mathbf{\beta
}\right)  ,
\end{align*}
where
\begin{equation*}
\dot{W}_{1,i}^{jl}(\mathbf{\beta},\mathbf{\gamma})=\frac{W_{1,i}%
^{jl}(\mathbf{\beta},\mathbf{\gamma})s_{jl}(\mathbf{\beta},\mathbf{\gamma}%
)}{\frac{1}{n}\sum_{i=1}^{n}W_{1,i}^{jl}(\mathbf{\beta},\mathbf{\gamma}%
)m_{i}^{jl}(\mathbf{\beta},\mathbf{\gamma})}.
\end{equation*}
We are going to derive $\nabla_{\mathbf{\beta}}\tau_{jl}(\mathbf{\beta
},\mathbf{\gamma})$ to this aim we have
\begin{equation*}
\nabla_{\mathbf{\beta}}\tau_{jl}(\mathbf{\beta},\mathbf{\gamma})=\nabla
_{\mathbf{\beta}}s_{jl}(\mathbf{\beta},\mathbf{\gamma})\frac{1}{n}\sum
_{i=1}^{n}\rho_{2}\left(  \frac{m_{i}^{jl}(\mathbf{\beta},\mathbf{\gamma}%
)}{s_{jl}(\mathbf{\beta},\mathbf{\gamma})}\right)  +s_{jl}(\mathbf{\beta
},\mathbf{\gamma})\frac{1}{n}\sum_{i=1}^{n}\nabla_{\mathbf{\beta}}\rho
_{2}\left(  \frac{m_{i}^{jl}(\mathbf{\beta},\mathbf{\gamma})}{s_{jl}%
(\mathbf{\beta},\mathbf{\gamma})}\right)
\end{equation*}
and since
\begin{equation*}
\nabla_{\mathbf{\beta}}\rho_{2}\left(  \frac{m_{i}^{jl}(\mathbf{\beta
},\mathbf{\gamma})}{s_{jl}(\mathbf{\beta},\mathbf{\gamma})}\right)  =\left.
\rho_{2}^{\prime}(x)\right\vert _{x=m_{i}^{jl}(\mathbf{\beta},\mathbf{\gamma
})/s_{jl}(\mathbf{\beta},\mathbf{\gamma})}\ \frac{\nabla_{\mathbf{\beta}}%
m_{i}^{jl}(\mathbf{\beta},\mathbf{\gamma})s_{jl}(\mathbf{\beta},\mathbf{\gamma
})-\nabla_{\mathbf{\beta}}s_{jl}(\mathbf{\beta},\mathbf{\gamma})m_{i}%
^{jl}(\mathbf{\beta},\mathbf{\gamma})}{s_{jl}^{2}(\mathbf{\beta}%
,\mathbf{\gamma})}%
\end{equation*}
by further letting
\begin{align*}
A_{k,jl}  &  =\frac{1}{n}\sum_{i=1}^{n}\rho_{k}\left(  \frac{m_{i}%
^{jl}(\mathbf{\beta},\mathbf{\gamma})}{s_{jl}(\mathbf{\beta},\mathbf{\gamma}%
)}\right)  ,\\
B_{k,jl}  &  =\frac{1}{n}\sum_{i=1}^{n}W_{k}\left(  \frac{m_{i}^{jl}%
(\mathbf{\beta},\mathbf{\gamma})}{s_{jl}(\mathbf{\beta},\mathbf{\gamma}%
)}\right)  \frac{m_{i}^{jl}(\mathbf{\beta},\mathbf{\gamma})}{s_{jl}%
(\mathbf{\beta},\mathbf{\gamma})},
\end{align*}
we obtain
\begin{align*}
\nabla_{\mathbf{\beta}}\tau_{jl}(\mathbf{\beta},\mathbf{\gamma})  &
=-\frac{2}{n}\sum_{i=1}^{n}A_{2,jl}\dot{W}_{1,i}^{jl}\left(  \dot{\mathbf{x}%
}_{i}^{jl\top}\dot{\mathbf{y}}_{i}^{jl}-\dot{\mathbf{x}}_{i}^{jl\top}%
\dot{\mathbf{x}}_{i}^{jl}\mathbf{\beta}\right) \\
&  -\frac{2}{n}\sum_{i=1}^{n}W_{2,i}^{jl}\left(  \dot{\mathbf{x}}_{i}^{jl\top
}\dot{\mathbf{y}}_{i}^{jl}-\dot{\mathbf{x}}_{i}^{jl\top}\dot{\mathbf{x}}%
_{i}^{jl}\mathbf{\beta}\right) \\
&  +\frac{2}{n}\sum_{i=1}^{n}W_{2,i}^{jl}\left[  \frac{m_{i}^{jl}%
(\mathbf{\beta},\mathbf{\gamma})}{s_{jl}(\mathbf{\beta},\mathbf{\gamma})}%
\frac{1}{n}\sum_{k=1}^{n}\dot{W}_{1,k}^{jl}\left(  \dot{\mathbf{x}}%
_{k}^{jl\top}\dot{\mathbf{y}}_{k}^{jl}-\dot{\mathbf{x}}_{k}^{jl\top}%
\dot{\mathbf{x}}_{k}^{jl}\mathbf{\beta}\right)  \right] \\
&  =-\frac{2}{n}\sum_{i=1}^{n}\left(  (A_{2,jl}-B_{2,jl})\dot{W}_{1,i}%
^{jl}+W_{2,i}^{jl}\right)  \left(  \dot{\mathbf{x}}_{i}^{jl\top}%
\dot{\mathbf{y}}_{i}^{jl}-\dot{\mathbf{x}}_{i}^{jl\top}\dot{\mathbf{x}}%
_{i}^{jl}\mathbf{\beta}\right)  .
\end{align*}
Hence differentiating (\ref{goal-tau}) with respect to $\mathbf{\beta}$ leads
to
\begin{equation*}
T_{\mathbf{\beta}}(\mathbf{\beta},\mathbf{\gamma})=-\frac{2}{n}\sum_{i=1}%
^{n}\sum_{j=1}^{p-1}\sum_{l=j+1}^{p}\left(  (A_{2,jl}-B_{2,jl})\dot{W}%
_{1,i}^{jl}+W_{2,i}^{jl}\right)  \left(  \dot{\mathbf{x}}_{i}^{jl\top}%
\dot{\mathbf{y}}_{i}^{jl}-\dot{\mathbf{x}}_{i}^{jl\top}\dot{\mathbf{x}}%
_{i}^{jl}\mathbf{\beta}\right)  =\mathbf{0},
\end{equation*}
and finally, by letting $\tilde{W}_{i}^{jl}=(A_{2,jl}-B_{2,jl})\dot{W}%
_{1,i}^{jl}+W_{2,i}^{jl}$ a fixed point equation for $\mathbf{\beta}$ is
\begin{equation}
\label{equ:Tbeta}\mathbf{\beta}=\left[  \sum_{i=1}^{n}\sum_{j=1}^{p-1}%
\sum_{l=j+1}^{p}\tilde{W}_{i}^{jl}(\mathbf{\beta},\mathbf{\gamma})\left(
\dot{\mathbf{x}}_{i}^{jl\top}\dot{\mathbf{x}}_{i}^{jl}\right)  \right]
^{-1}\sum_{i=1}^{n}\sum_{j=1}^{p-1}\sum_{l=j+1}^{p}\tilde{W}_{i}%
^{jl}(\mathbf{\beta},\mathbf{\gamma})\left(  \dot{\mathbf{x}}_{i}^{jl\top}%
\dot{\mathbf{y}}_{i}^{jl}\right)  .
\end{equation}

\subsection{Derivation of the Estimating Equations for $\mathbf{\gamma}$
\label{sec:equationsGamma}}

Let the residual $\mathbf{r}_{i}^{jl}$ be as follows
\begin{equation*}
\mathbf{r}_{i}^{jl}=\mathbf{y}_{i}^{jl}-\mathbf{x}_{i}^{jl}\mathbf{\beta}.
\end{equation*}
In order to obtain the estimating equations for $\mathbf{\gamma}$ we have to
differentiate the function (\ref{goal-tau}) with respect to $\mathbf{\gamma}$.
Let us write
\begin{equation*}
m_{i}^{jl}(\mathbf{\beta},\mathbf{\gamma})=\mathbf{r}_{i}^{jl\top}\Sigma
_{jl}^{\ast\ -1}(\mathbf{\gamma})\mathbf{r}_{i}^{jl},
\end{equation*}
and therefore
\begin{equation*}
\frac{\partial}{\partial\gamma_{r}}m_{i}^{jl}(\mathbf{\beta},\mathbf{\gamma
})=\mathbf{r}_{i}^{jl\top}\frac{\partial}{\partial\gamma_{r}}\Sigma_{jl}%
^{\ast\ -1}(\mathbf{\gamma})\mathbf{r}_{i}^{jl}.
\end{equation*}
Call $\sigma_{jl}$ the $(j,l)$ element of $\Sigma(1,\mathbf{\gamma})$ and
$v_{r,jl}$ the $(j,l)$ element of $V_{r}$ ($r=1,\ldots,J$). We are going to
assume without loss of generality that $\eta=1$. We can write
\begin{equation}
\Sigma_{jl}^{\ast\ -1}=|\Sigma_{jl}|^{-1/2}\left(
\begin{array}
[c]{ll}%
\sigma_{ll} & -\sigma_{jl}\\
-\sigma_{jl} & \sigma_{jj}%
\end{array}
\right)  . \label{sigmastarinv}%
\end{equation}
Since $\sigma_{jl}=\delta_{jl}+\sum_{r=1}^{J}\gamma_{r}v_{r,jl}$ where
$\delta_{jl}=1$ if $j=l$ and $0$ otherwise, and $\frac{\partial}%
{\partial\gamma_{r}}\sigma_{jl}=v_{r,jl}$ we have
\begin{align*}
\frac{\partial}{\partial\gamma_{r}}|\mathbf{\Sigma}_{jl}|  &  =(v_{r,ll}%
\sigma_{jj}+\sigma_{ll}v_{r,jj}-2v_{r,jl}\sigma_{jl})\\
&  =2c_{r,jl},
\end{align*}
where
\begin{equation*}
c_{r,jl}=\frac{1}{2}v_{r,jj}\sigma_{ll}+\frac{1}{2}v_{r,ll}\sigma
_{jj}-v_{r,jl}\sigma_{jl}.\label{Crjl}%
\end{equation*}
Moreover,
\begin{equation*}
\left(
\begin{array}
[c]{ll}%
\partial\sigma_{ll}/\partial\gamma_{r} & -\partial\sigma_{jl}/\partial
\gamma_{r}\\
-\partial\sigma_{jl}/\partial\gamma_{r} & \partial\sigma_{jj}/\partial
\gamma_{r}%
\end{array}
\right)  =\left(
\begin{array}
[c]{ll}%
v_{r,ll} & -v_{r,jl}\\
-v_{r,jl} & v_{r,jj}%
\end{array}
\right)  =|V_{r,jl}|V_{r,jl}^{-1}.
\end{equation*}
Then noting that $\left(
\begin{array}
[c]{ll}%
\sigma_{ll} & -\sigma_{jl}\\
-\sigma_{jl} & \sigma_{jj}%
\end{array}
\right)  =|\Sigma_{jl}|\ \Sigma_{jl}^{-1}$ differentiating (\ref{sigmastarinv}%
) is
\begin{align*}
\frac{\partial}{\partial\gamma_{r}}\Sigma_{jl}^{\ast-1}  &  =\left(
\begin{array}
[c]{ll}%
\sigma_{ll} & -\sigma_{jl}\\
-\sigma_{jl} & \sigma_{jj}%
\end{array}
\right)  \frac{\partial}{\partial\gamma_{r}}|\Sigma_{jl}|^{-1/2}+|\Sigma
_{jl}|^{-1/2}\ \left(
\begin{array}
[c]{ll}%
\partial\sigma_{ll}/\partial\gamma_{r} & -\partial\sigma_{jl}/\partial
\gamma_{r}\\
-\partial\sigma_{jl}/\partial\gamma_{r} & \partial\sigma_{jj}/\partial
\gamma_{r}%
\end{array}
\right) \\
&  =-|\Sigma_{jl}|^{-3/2}\left(
\begin{array}
[c]{ll}%
\sigma_{ll} & -\sigma_{jl}\\
-\sigma_{jl} & \sigma_{jj}%
\end{array}
\right)  c_{r,jl}+|\Sigma_{jl}|^{-1/2}|V_{r,jl}|V_{r,jl}^{-1}\\
&  =-|\Sigma_{jl}|^{-3/2}|\Sigma_{jl}|\Sigma_{jl}^{-1}c_{r,jl}+|\Sigma
_{jl}|^{-1/2}|V_{r,jl}|V_{r,jl}^{-1}\\
&  =|\Sigma_{jl}|^{-1/2}\left(  +|V_{r,jl}|V_{r,jl}^{-1}-c_{r,jl}\Sigma
_{jl}^{-1}\right)  .
\end{align*}
Hence,
\begin{equation*}
\frac{\partial}{\partial\gamma_{r}}m_{i}^{jl}(\mathbf{\beta},\mathbf{\gamma
})=|\Sigma_{jl}|^{-1/2}\mathbf{r}_{i}^{jl\top}[|V_{r,jl}|V_{r,jl}%
^{-1}-c_{r,jl}\Sigma^{-1}]\mathbf{r}_{i}^{jl}.
\end{equation*}
We now compute the derivative of $s_{jl}(\mathbf{\beta},\mathbf{\gamma})$ with
respect to $\gamma_{r}$. We consider the equality
\begin{equation*}
\frac{1}{n}\sum_{i=1}^{n}\rho_{1}\left(  \frac{m_{i}^{jl}(\mathbf{\beta
},\mathbf{\gamma})}{s_{jl}(\mathbf{\beta},\mathbf{\gamma})}\right)  =b
\end{equation*}
and we differentiate both sides
\begin{equation*}
\frac{1}{n}\sum_{i=1}^{n}\frac{\partial}{\partial\gamma_{r}}\rho_{1}\left(
\frac{m_{i}^{jl}(\mathbf{\beta},\mathbf{\gamma})}{s_{jl}(\mathbf{\beta
},\mathbf{\gamma})}\right)  =0,\qquad1\leq r\leq J,
\end{equation*}
which leads to the equation
\begin{equation*}
\frac{1}{n}\sum_{i=1}^{n}W_{1,i}^{jl}(\mathbf{\beta},\mathbf{\gamma}%
)\frac{\frac{\partial}{\partial\gamma_{r}}m_{i}^{jl}(\mathbf{\beta
},\mathbf{\gamma})}{s_{jl}(\mathbf{\beta},\mathbf{\gamma})}-\frac{1}{n}%
\sum_{i=1}^{n}W_{1,i}^{jl}(\mathbf{\beta},\mathbf{\gamma})\frac{m_{i}%
^{jl}(\mathbf{\beta},\mathbf{\gamma})}{s_{jl}^{2}(\mathbf{\beta}%
,\mathbf{\gamma})}\frac{\partial}{\partial\gamma_{r}}s_{jl}(\mathbf{\beta
},\mathbf{\gamma})=0,
\end{equation*}
and replacing the terms by the previous calculation, leads to the following
expression for $\frac{\partial}{\partial\gamma_{r}}s_{jl}(\mathbf{\beta
},\mathbf{\gamma})$
\begin{align*}
\frac{\partial}{\partial\gamma_{r}}s_{jl}(\mathbf{\beta},\mathbf{\gamma})  &
=\frac{s_{jl}(\mathbf{\beta},\mathbf{\gamma})\frac{1}{n}\sum_{i=1}^{n}%
W_{1,i}^{jl}(\mathbf{\beta},\mathbf{\gamma})\ \frac{\partial}{\partial
\gamma_{r}}m_{i}^{jl}(\mathbf{\beta},\mathbf{\gamma})}{\frac{1}{n}\sum
_{i=1}^{n}W_{1,i}^{jl}(\mathbf{\beta},\mathbf{\gamma})m_{i}^{jl}%
(\mathbf{\beta},\mathbf{\gamma})}\\
&  =\frac{1}{n}\sum_{i=1}^{n}\dot{W}_{1,i}^{jl}(\mathbf{\beta},\mathbf{\gamma
})\frac{\partial}{\partial\gamma_{r}}m_{i}^{jl}(\mathbf{\beta},\mathbf{\gamma
})\\
&  =\frac{1}{n}\sum_{i=1}^{n}\dot{W}_{1,i}^{jl}(\mathbf{\beta},\mathbf{\gamma
})|\Sigma_{jl}|^{-1/2}\mathbf{r}_{i}^{jl\top}\left(  |V_{r,jl}|V_{r,jl}%
^{-1}-c_{r,jl}\Sigma_{jl}^{-1}\right)  \mathbf{r}_{i}^{jl}.
\end{align*}
Going back to the derivative $\frac{\partial}{\partial\gamma_{r}}\tau_{jl}$ we
obtain
\begin{align*}
\frac{\partial}{\partial\gamma_{r}}\tau_{jl}  &  =\frac{1}{n}\sum_{i=1}%
^{n}\rho_{2}\left(  \frac{m_{i}^{jl}(\mathbf{\beta},\mathbf{\gamma})}%
{s_{jl}(\mathbf{\beta},\mathbf{\gamma})}\right)  \frac{\partial}%
{\partial\gamma_{r}}s_{jl}(\mathbf{\beta},\mathbf{\gamma})\\
&  +\frac{1}{n}\sum_{i=1}^{n}W_{2,i}\left(  \frac{m_{i}^{jl}(\mathbf{\beta
},\mathbf{\gamma})}{s_{jl}(\mathbf{\beta},\mathbf{\gamma})}\right)  \left[
\frac{\partial}{\partial\gamma_{r}}m_{i}^{jl}(\mathbf{\beta},\mathbf{\gamma
})-\frac{m_{i}^{jl}(\mathbf{\beta},\mathbf{\gamma})}{s_{jl}(\mathbf{\beta
},\mathbf{\gamma})}\frac{\partial}{\partial\gamma_{r}}s_{jl}(\mathbf{\beta
},\mathbf{\gamma})\right] \\
&  =\frac{1}{n}\sum_{i=1}^{n}\left[  \left(  A_{2,jl}-B_{2,jl}\right)  \dot
{W}_{1,i}^{jl}(\mathbf{\beta},\mathbf{\gamma})+W_{2,i}^{jl}(\mathbf{\beta
},\mathbf{\gamma})\right]  \frac{\partial}{\partial\gamma_{r}}m_{i}%
^{jl}(\mathbf{\beta},\mathbf{\gamma}).
\end{align*}
which leads to the estimating equation ($r=1,\ldots,J$)
\begin{equation}
T_{\gamma_{r}}(\mathbf{\beta},\mathbf{\gamma})=\frac{1}{n}\sum_{i=1}^{n}%
\sum_{j=1}^{p-1}\sum_{l=j+1}^{p}\frac{\tilde{W}_{i}^{jl}(\mathbf{\beta
},\mathbf{\gamma})}{\left\vert \Sigma_{jl}{}^{-1/2}(1,\mathbf{\gamma
})\right\vert ^{1/2}}\ \mathbf{r}_{i}^{jl\top}(\mathbf{\beta})\left(
|V_{r,jl}|V_{r,jl}^{-1}-c_{r,jl}\Sigma_{jl}^{-1}(1,\mathbf{\gamma})\right)
\mathbf{r}_{i}^{jl}(\mathbf{\beta})=0. \label{equ:Tgamma}%
\end{equation}

\section{Breakdown Point \label{app:breakdown1}}

To prove Theorems \ref{teo:classicbp} and \ref{teo:independentbp} that give
lower bounds for the FSBDPCC and FSBDPIC of the composite $\tau$-estimators
respectively, we need the following Lemmas.

\begin{lemma}
\label{lemmadelta} Consider a sample $\mathbf{T}$ $=(\mathbf{t}_{1}%
,\ldots,\mathbf{t}_{n})$. Let $h=h(\mathbf{T}$) and $h^{\ast}=h^{\ast
}(\mathbf{T}$) be defined by (\ref{equ:h}) and (\ref{equ:hstar}) respectively.
Define for all ($j,l)$%
\begin{equation*}
\delta_{jl}=\inf_{\Vert\mathbf{b}\Vert=1,\mathbf{b}\in\mathbb{R}^{k},}%
\inf_{1\leq i_{1}<\cdots<i_{h}<i_{{h}+1}\leq n}\max\{\Vert\mathbf{x}_{i_{1}%
}^{jl}\mathbf{b}\Vert,\ldots,\Vert\mathbf{x}_{i_{h}}^{jl}\mathbf{b}\Vert
,\Vert\mathbf{x}_{i_{{h}+1}}^{jl}\mathbf{b}\Vert\}\label{deltajl}%
\end{equation*}
and
\begin{align}
\delta_{jl}^{\ast}=\inf_{\Vert\mathbf{u}\Vert=1,\mathbf{u}\in\mathbb{R}%
^{p},\mathbf{b}\in\mathbb{R}^{k},}\inf_{1\leq i_{1}<\cdots<i_{h^{\ast}%
}<i_{{h^{\ast}}+1}\leq n}  &  \max\{\Vert\mathbf{u}^{\top}(\mathbf{y}_{i_{1}%
}^{jl}-\mathbf{x}_{i_{1}}^{jl}\mathbf{b})\Vert,\ldots,\nonumber\\
&  \Vert\mathbf{u}^{\top}(\mathbf{y}_{i_{h^{\ast}}}^{jl}-\mathbf{x}%
_{i_{h^{\ast}}}^{jl}\mathbf{b})\Vert,\Vert\mathbf{u}^{\top}(\mathbf{y}%
_{i_{h^{\ast}+1}}^{jl}-\mathbf{x}_{i_{h^{\ast}+1}}^{jl}\mathbf{b})\Vert\}.
\label{deltastarjl}%
\end{align}
Then, $\delta=\min_{jl}\delta_{jl}>0$ and $\delta^{\ast}=\min_{jl}\delta
_{jl}^{\ast}>0$.
\end{lemma}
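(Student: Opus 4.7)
The plan is a contradiction-by-compactness argument, carried out in parallel for the two assertions. The claim $\delta>0$ is the easier one since the relevant parameter lies in a compact set, while $\delta^{*}>0$ is more delicate because $\mathbf{b}$ ranges over the non-compact $\mathbb{R}^{k}$.

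For $\delta>0$, I fix a pair $(j,l)$ and a $(h+1)$-subset $I$ of indices. The map $\mathbf{b}\mapsto\max_{i\in I}\|\mathbf{x}_{i}^{jl}\mathbf{b}\|$ is continuous on the compact sphere $\{\mathbf{b}\in\mathbb{R}^{k}:\|\mathbf{b}\|=1\}$, so it attains its minimum. If that minimum were zero, some unit $\mathbf{b}^{*}$ would satisfy $\mathbf{x}_{i}^{jl}\mathbf{b}^{*}=\mathbf{0}$ for all $h+1$ indices in $I$, giving $h_{jl}(\mathbf{T})\geq h+1>h$, which contradicts (\ref{equ:h}). Taking the minimum over the finitely many pairs $(jl,I)$ yields $\delta>0$.

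For $\delta^{*}>0$, I assume for contradiction that $\delta_{jl,I}^{*}=0$ for some $(j,l)$ and some $(h^{*}+1)$-subset $I=\{i_{1},\ldots,i_{h^{*}+1}\}$, and pick a minimizing sequence $(\mathbf{u}_{m},\mathbf{b}_{m})$ with $\|\mathbf{u}_{m}\|=1$ and $\max_{k}|\mathbf{u}_{m}^{\top}(\mathbf{y}_{i_{k}}^{jl}-\mathbf{x}_{i_{k}}^{jl}\mathbf{b}_{m})|\to 0$. By compactness of the unit sphere I extract $\mathbf{u}_{m}\to\mathbf{u}^{*}$ along a subsequence. When $(\mathbf{b}_{m})$ has a bounded subsequence, passing to a limit $\mathbf{b}^{*}$ gives $\mathbf{u}^{*\top}(\mathbf{y}_{i_{k}}^{jl}-\mathbf{x}_{i_{k}}^{jl}\mathbf{b}^{*})=0$ for $k=1,\ldots,h^{*}+1$, contradicting (\ref{equ:hstar}).

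The remaining case $\|\mathbf{b}_{m}\|\to\infty$ is the main obstacle. My plan is to replace $\mathbf{b}_{m}$ by its orthogonal projection $\mathbf{b}_{m}'$ onto $\ker(M(\mathbf{u}_{m}))^{\perp}$, where $M(\mathbf{u})$ is the $(h^{*}+1)\times k$ matrix with rows $\mathbf{u}^{\top}\mathbf{x}_{i_{k}}^{jl}$; this leaves every $\mathbf{u}_{m}^{\top}(\mathbf{y}_{i_{k}}^{jl}-\mathbf{x}_{i_{k}}^{jl}\mathbf{b})$ unchanged and presents $\mathbf{b}_{m}'$ as the minimum-norm preimage of the bounded vector $M(\mathbf{u}_{m})\mathbf{b}_{m}=(\mathbf{u}_{m}^{\top}\mathbf{y}_{i_{k}}^{jl}+o(1))_{k}$. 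If the smallest nonzero singular value of $M(\mathbf{u}_{m})$ stays bounded below, $\mathbf{b}_{m}'$ is bounded and I am back in the previous case. The genuine difficulty, which I expect to be the hard part, is when $\mathbf{u}_{m}$ approaches a rank-drop direction of $M$, forcing this singular value to zero; I plan to handle this by extracting limits of the vanishing singular vectors, arguing they lie in $\ker M(\mathbf{u}^{*})$, projecting them out of $\mathbf{b}_{m}$, and iterating. Since the dimension of the collapsing kernel strictly decreases at each step, the iteration terminates and produces a bounded replacement sequence, closing the argument. Finally, taking minima over the finitely many $(j,l)$ and $(h^{*}+1)$-subsets $I$ yields $\delta^{*}>0$.
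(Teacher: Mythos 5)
Your argument for $\delta>0$ is correct and complete: continuity of $\mathbf{b}\mapsto\max_{i\in I}\Vert\mathbf{x}_{i}^{jl}\mathbf{b}\Vert$ on the compact unit sphere, the observation that a zero minimum would give $h_{jl}(\mathbf{T})\geq h+1>h\geq h_{jl}(\mathbf{T})$, and a final minimum over the finitely many pairs $(j,l)$ and subsets $I$. This is already more than the paper supplies, since its entire proof of the lemma is the single sentence that the claim ``follows from the definition'' of $h(\mathbf{T})$ and $h^{\ast}(\mathbf{T})$. Your treatment of the bounded-subsequence case for $\delta^{\ast}$ is likewise fine.

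The gap is exactly where you place it, and it cannot be closed as the lemma is stated: along sequences with $\Vert\mathbf{b}_{m}\Vert\rightarrow\infty$ the conclusion $\delta^{\ast}>0$ can genuinely fail, so no projection-and-iteration scheme on the singular vectors of $M(\mathbf{u}_{m})$ will rescue it. Take $k=1$, $p=2$ (so the only pair is $(j,l)=(1,2)$), $\mathbf{x}_{i}^{12}=(1,0)^{\top}$ for all $i$, and $\mathbf{y}_{i}^{12}=(i,1)^{\top}$. For any unit $\mathbf{u}=(u_{1},u_{2})^{\top}$ and any $b$ the quantity $\mathbf{u}^{\top}(\mathbf{y}_{i}^{12}-\mathbf{x}_{i}^{12}b)=u_{1}(i-b)+u_{2}$ vanishes for at most one index $i$ (two indices would force $u_{1}=u_{2}=0$), so $h^{\ast}=1$, and clearly $h=0$. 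Yet with $\mathbf{u}_{\epsilon}=(\epsilon,\sqrt{1-\epsilon^{2}})^{\top}$ and $b_{\epsilon}=1+\sqrt{1-\epsilon^{2}}/\epsilon$ one gets $\mathbf{u}_{\epsilon}^{\top}(\mathbf{y}_{i}^{12}-\mathbf{x}_{i}^{12}b_{\epsilon})=\epsilon(i-1)$, so the maximum over the pair of indices $\{1,2\}$ equals $\epsilon\rightarrow0$ and $\delta_{12}^{\ast}=0$. The structural point your singular-value analysis is circling is this: what must be excluded is the existence of $h^{\ast}+1$ indices with $\mathbf{u}^{\top}\mathbf{x}_{i}^{jl}\mathbf{c}=0$ for a common unit pair $(\mathbf{u},\mathbf{c})$ --- a single scalar equation per $i$ --- together with a compatible alignment of the values $\mathbf{u}^{\top}\mathbf{y}_{i}^{jl}$. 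This is controlled neither by $h_{jl}$, which counts the much stronger event that the whole vector $\mathbf{x}_{i}^{jl}\mathbf{c}$ vanishes, nor by $h_{jl}^{\ast}$, which only sees finite $\mathbf{b}$. A clean repair is to introduce $\tilde{h}_{jl}=\max_{\mathbf{u}\neq\mathbf{0},\mathbf{c}\neq\mathbf{0}}\#\{i:\mathbf{u}^{\top}\mathbf{x}_{i}^{jl}\mathbf{c}=0\}$ and run your two cases over subsets of size $\max(h^{\ast},\max_{jl}\tilde{h}_{jl})+1$: in the unbounded case a uniform lower bound $\max_{i\in I}|\mathbf{u}^{\top}\mathbf{x}_{i}^{jl}\mathbf{c}|\geq\delta^{\prime}>0$ on the product of spheres then forces $\max_{i\in I}|\mathbf{u}_{m}^{\top}(\mathbf{y}_{i}^{jl}-\mathbf{x}_{i}^{jl}\mathbf{b}_{m})|\rightarrow\infty$, closing the argument. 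As written, however, your proposal does not prove the stated lemma, and neither does the paper.
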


\begin{proof}
The proof follows from the definition of $h(\mathbf{T}$) and $h^{\ast
}(\mathbf{T}$) in (\ref{equ:h}) and (\ref{equ:hstar}) respectively.
\end{proof}

\begin{lemma}
\label{lemma:bdpgamma} Consider the same assumptions as in Theorem
(\ref{teo:classicbp}), a sample $\mathbf{T}$ $=(\mathbf{t}_{1},\ldots
,\mathbf{t}_{n})$ and let $m<\min((1-b)n-f(\mathbf{T}),bn)$. Then, if
$\mathbf{\check{T}}\in\mathcal{T}_{m}^{(\text{C})}$ and $\widehat
{\mathbf{\gamma}}(\mathbf{\check{T}}$) is the $\tau$-estimator of
$\mathbf{\gamma}$ for the sample $\mathbf{\check{T}}$, there exists $K>0$ that
for all couples $(j,l)$ $(1\leq j<l\leq p)$, the two eigenvalues of
$\Sigma_{jl}^{\ast}(\widehat{\mathbf{\gamma}}(\mathbf{\check{T}}))$,
$\omega_{jl}^{-}(\mathbf{\check{T}})\leq\omega_{jl}^{+}(\mathbf{\check{T}})$
are such that
\begin{equation*}
1\leq\frac{\omega_{jl}^{+}(\mathbf{\check{T}})}{\omega_{jl}^{-}(\mathbf{\check
{T}})}\leq K.
\end{equation*}

\end{lemma}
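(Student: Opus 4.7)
The plan is to exploit the determinant-one constraint $|\Sigma_{jl}^{\ast}(\mathbf{\gamma})|=1$: since $\omega_{jl}^{-}\omega_{jl}^{+}=1$, the ratio in the lemma equals $(\omega_{jl}^{+})^{2}=1/(\omega_{jl}^{-})^{2}$, so it suffices to establish a positive lower bound on $\omega_{jl}^{-}(\mathbf{\check{T}})$ that is uniform over $\mathbf{\check{T}}\in\mathcal{T}_{m}^{(\text{C})}$ and over the $p(p-1)/2$ couples $(j,l)$.

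First I would establish a uniform upper bound on each $\tau_{jl}$ at the estimator. Plugging in the reference parameters $(\mathbf{\beta}_{0},\mathbf{0})$, for which $\Sigma_{jl}^{\ast}(\mathbf{0})=I_{2}$, reduces $\tau_{jl}(\mathbf{\beta}_{0},\mathbf{0})$ to the univariate $\tau$-scale of the Euclidean squared residuals $\Vert\mathbf{y}_{i}^{jl}-\mathbf{x}_{i}^{jl}\mathbf{\beta}_{0}\Vert^{2}$. Because $\tau$-scales inherit the breakdown point of their underlying M-scale and $m<bn$ leaves more than $(1-b)n$ of these residuals unaltered, this quantity stays below a constant $K_{1}=K_{1}(\mathbf{T})$ for every $\mathbf{\check{T}}\in\mathcal{T}_{m}^{(\text{C})}$. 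By optimality of $(\widehat{\mathbf{\beta}},\widehat{\mathbf{\gamma}})$, $\tau_{jl}(\widehat{\mathbf{\beta}},\widehat{\mathbf{\gamma}})\leq T(\widehat{\mathbf{\beta}},\widehat{\mathbf{\gamma}})\leq T(\mathbf{\beta}_{0},\mathbf{0})\leq\binom{p}{2}K_{1}=:K_{1}'$.

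Next I would force $\omega_{jl}^{-}$ to control $\tau_{jl}$ from below. Let $\mathbf{u}^{-}$ be the unit eigenvector of $\Sigma_{jl}^{\ast}(\widehat{\mathbf{\gamma}})$ for $\omega_{jl}^{-}$; then $m_{i}^{jl}(\widehat{\mathbf{\beta}},\widehat{\mathbf{\gamma}})\geq(\mathbf{u}^{-\top}\mathbf{r}_{i}^{jl})^{2}/\omega_{jl}^{-}$ with $\mathbf{r}_{i}^{jl}=\mathbf{y}_{i}^{jl}-\mathbf{x}_{i}^{jl}\widehat{\mathbf{\beta}}$. Among the $n-m$ observations unchanged from $\mathbf{T}$, the definition of $h^{\ast}$ together with Lemma \ref{lemmadelta} implies that at most $h^{\ast}$ can have $|\mathbf{u}^{-\top}\mathbf{r}_{i}^{jl}|<\delta^{\ast}$; since by hypothesis $m<(1-b)n-h-h^{\ast}$, there remain at least $n-m-h^{\ast}>bn+h>bn$ indices $i$ with $m_{i}^{jl}\geq(\delta^{\ast})^{2}/\omega_{jl}^{-}$.

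The contradiction then comes in two steps. The M-scale identity $\tfrac{1}{n}\sum\rho_{1}(m_{i}^{jl}/s_{jl})=b$, combined with $\rho_{1}\leq1$ and A5, forces $s_{jl}(\widehat{\mathbf{\beta}},\widehat{\mathbf{\gamma}})\geq c_{1}/\omega_{jl}^{-}$ for a positive constant $c_{1}=c_{1}(\rho_{1},b,\delta^{\ast})$, because otherwise more than $bn$ of the ratios $m_{i}^{jl}/s_{jl}$ would be so large that their $\rho_{1}$-values drive $\tfrac{1}{n}\sum\rho_{1}$ strictly above $b$. Furthermore, the same identity gives $\max_{i}\rho_{1}(m_{i}^{jl}/s_{jl})\geq b$, so some ratio exceeds $c_{b}:=\inf\{v:\rho_{1}(v)\geq b\}>0$ and hence $\tfrac{1}{n}\sum\rho_{2}(m_{i}^{jl}/s_{jl})\geq\rho_{2}(c_{b})/n>0$ by A5 applied to $\rho_{2}$. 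Multiplying the two inequalities,
\begin{equation*}
\tau_{jl}(\widehat{\mathbf{\beta}},\widehat{\mathbf{\gamma}})\geq\frac{c_{1}\rho_{2}(c_{b})}{n\,\omega_{jl}^{-}},
\end{equation*}
and comparing with $\tau_{jl}\leq K_{1}'$ gives the uniform lower bound $\omega_{jl}^{-}\geq c_{1}\rho_{2}(c_{b})/(nK_{1}')$, from which $K$ follows. The main obstacle is the quantitative step converting the M-scale constraint plus ``more than $bn$ residuals of size $(\delta^{\ast})^{2}/\omega_{jl}^{-}$'' into the explicit bound $s_{jl}\geq c_{1}/\omega_{jl}^{-}$; this is a routine continuity/monotonicity argument using A4--A5 but is the only place where the precise interplay between the assumption $m<(1-b)n-f(\mathbf{T})$ and the $\rho_{1}$-sum being exactly $nb$ is used.
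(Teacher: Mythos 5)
Your proof is correct and follows essentially the same route as the paper: both use Lemma \ref{lemmadelta} together with the hypothesis $m<\min((1-b)n-f(\mathbf{T}),bn)$ to guarantee more than $bn$ unchanged observations whose projections on the small-eigenvalue direction exceed $\delta^{\ast}$, so that $\tau_{jl}$ must blow up as $\omega_{jl}^{-}\rightarrow 0$, while the optimal value of $T$ stays bounded by its value at a fixed reference parameter. The only difference is presentational: you make the argument quantitative (an explicit bound $\tau_{jl}\geq c_{1}\rho_{2}(c_{b})/(n\,\omega_{jl}^{-})$ against $\tau_{jl}\leq K_{1}'$, hence an explicit $K$) and re-derive the M-scale and $\tau$-scale breakdown inequalities directly from A1--A5, whereas the paper argues by contradiction along a sequence $\{\mathbf{\check{T}}_{N}\}$ and cites Lemmas A.1 and A.3 of \citet{garciaben_martinez_yohai_2006} for those facts.
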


\begin{proof}
Suppose that lemma is not true for the couple $(j,l)$. Then there exists a
sequence $\{\mathbf{\check{T}}_{N}\}_{1\leq N<\infty}$ such that $\omega
_{jl}^{+}(\mathbf{\check{T}}_{N})/\omega_{jl}^{-}(\mathbf{\check{T}}%
_{N})\rightarrow\infty$. Since $|\Sigma_{jl}^{\ast}(\widehat{\mathbf{\gamma}%
}(\mathbf{\check{T}}_{N}))|=1$ this is equivalent to $\omega_{jl}%
^{-}(\mathbf{\check{T}}_{N})\rightarrow0$. We are going to show that, for this
sequence,
\begin{equation}
\lim_{N\rightarrow\infty}\tau_{jl}(\widehat{\mathbf{\beta}}(\mathbf{\check{T}%
}_{N}),\widehat{\mathbf{\gamma}}(\mathbf{\check{T}}_{N}))=\infty.
\label{equ:snf}%
\end{equation}
Let $U_{N}$ be an orthogonal matrix of eigenvectors of $\Sigma_{jl}^{\ast
-1}(\widehat{\mathbf{\gamma}}(\mathbf{\check{T}}_{N}))$ and $\Lambda_{N}$ be
the diagonal matrix with the corresponding eigenvalues, i.e., $\Lambda
_{N}=\operatorname{diag}(\lambda_{jlN}^{+}=1/\omega_{jlN}^{-},\lambda
_{jlN}^{-}=1/\omega_{jlN}^{+})$ and let
\begin{equation*}
\mathbf{g}_{Ni}=\left(
\begin{array}
[c]{l}%
g_{Ni1}\\
g_{Ni2}%
\end{array}
\right)  =\left(  U_{N}^{\top}\mathbf{y}_{Ni}^{jl}-U_{N}^{\top}\mathbf{x}%
_{Ni}^{jl}\widehat{\mathbf{\beta}}(\mathbf{\check{T}}_{N})\right)  ,
\end{equation*}
then, calling $\mathbf{u}_{N1}$ the first column of $U_{N}$ we get
\begin{align}
m_{Ni}^{jl}(\widehat{\mathbf{\beta}}(\mathbf{\check{T}}_{N}),\widehat
{\mathbf{\gamma}}(\mathbf{\check{T}}_{N}))  &  =\left(  \mathbf{y}_{Ni}%
^{jl}-\mathbf{x}_{Ni}^{jl}\widehat{\mathbf{\beta}}(\mathbf{\check{T}}%
_{N})\right)  ^{\top}U_{N}\Lambda_{N}U_{N}^{\top}\left(  \mathbf{y}_{Ni}%
^{jl}-\mathbf{x}_{Ni}^{jl}\widehat{\mathbf{\beta}}(\mathbf{\check{T}}%
_{N})\right) \nonumber\\
&  =\left(  U_{N}^{\top}\mathbf{y}_{Ni}^{jl}-U_{N}^{\top}\mathbf{x}_{Ni}%
^{jl}\widehat{\mathbf{\beta}}(\mathbf{\check{T}}_{N})\right)  ^{\top}%
\Lambda_{N}\left(  U_{N}^{\top}\mathbf{y}_{Ni}^{jl}-U_{N}^{\top}%
\mathbf{x}_{Ni}^{jl}\widehat{\mathbf{\beta}}(\mathbf{\check{T}}_{N})\right)
\nonumber\\
&  \geq\left(  \mathbf{u}_{N1}^{T}\mathbf{y}_{Ni}^{jl}-\mathbf{u}_{N1}%
^{T}\mathbf{x}_{Ni}^{jlT}\widehat{\mathbf{\beta}}(\mathbf{\check{T}}%
_{N})\right)  ^{2}\lambda_{jlN}^{+}\nonumber\\
&  =g_{Ni1}^{2}\lambda_{jlN}^{+}\nonumber\\
&  =\frac{g_{Ni1}^{2}}{\omega_{jlN}^{-}}. \label{equ:meta}%
\end{align}
By Lemma \ref{lemmadelta} there exists $\delta^{\ast}>0$ such that for at
least $n-f(\mathbf{T})$ observations from $\mathbf{T}$ we have
\begin{equation*}
\inf_{\Vert\mathbf{u}\Vert=1,\mathbf{b}}\Vert\mathbf{u}^{\top}(\mathbf{y}%
_{i}^{jl}-\mathbf{x}_{i}^{jl}\mathbf{b})\Vert>\delta^{\ast}%
\end{equation*}
and we can find in any sample $\mathbf{\check{T}}_{N}$ more than
$[bn]+f(\mathbf{T})$ observations from the original sample and therefore
$[bn]+1$ indexes $i_{1},\ldots,i_{q},\ldots,i_{[bn]+1}$ such that
\begin{equation*}
g_{N{i_{q}}1}^{2}\geq\delta^{\ast},\text{ }1\leq q\leq\lbrack bn]+1.
\end{equation*}
Then according to equation (\ref{equ:meta}) there are more than $[bn]+1$
observations such that
\begin{equation*}
m_{N{i_{q}}}^{jl}(\widehat{\mathbf{\beta}}(\mathbf{\check{T}}_{N}),\hat
{\gamma}(\mathbf{\check{T}}_{N}))\rightarrow\infty.
\end{equation*}
Using Lemma A.3 of \citet{garciaben_martinez_yohai_2006} (see also
\citet{yohai_zamar_1986}) this implies (\ref{equ:snf}). On the other hand if
we put $\widehat{\mathbf{\beta}}_{N}=\mathbf{0}$ and $\widehat{\mathbf{\gamma
}}_{N}=\mathbf{0}$ we will have, for all pair $(j,l)$ more than $[bn]+1$
observations such that the corresponding squared Mahalanobis distances are
uniformly bounded on $N$ and therefore by Lemma A.1 of
\citet{garciaben_martinez_yohai_2006} (see also \citet{yohai_zamar_1986}) all
the $\tau_{jl}(\mathbf{0},\mathbf{0})$ will be uniformly bounded and therefore
$T(\mathbf{0},\mathbf{0})$ will be finite. This contradicts the definition of
the composite $\tau$-estimator for $\mathbf{\beta}$ and $\mathbf{\gamma}$.
\end{proof}

\begin{lemma}
\label{lemma:bdpbeta} Consider the same assumptions as in Theorem
(\ref{teo:classicbp}). Then for any $m<\min((1-b)n-f(\mathbf{T}),bn)$ we have
$B_{m}^{(\text{C})}(\mathbf{T},\widehat{\mathbf{\beta}})<\infty$.
\end{lemma}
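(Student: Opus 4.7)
The plan is to argue by contradiction. Suppose there exists a sequence $\mathbf{\check{T}}_N\in\mathcal{T}_m^{(\text{C})}$ with $\|\widehat{\mathbf{\beta}}(\mathbf{\check{T}}_N)\|\to\infty$, and write $\widehat{\mathbf{\beta}}_N=\widehat{\mathbf{\beta}}(\mathbf{\check{T}}_N)$, $\widehat{\mathbf{\gamma}}_N=\widehat{\mathbf{\gamma}}(\mathbf{\check{T}}_N)$ for brevity. First I would establish a uniform upper bound on the goal function at a fixed parameter value. Evaluate $T$ at $(\mathbf{0},\mathbf{0})$: for every $\mathbf{\check{T}}_N$ at least $n-m$ of the entries coincide with the original sample, so at $(\mathbf{0},\mathbf{0})$ at least $n-m>(1-b)n$ of the squared Mahalanobis distances $m_i^{jl}(\mathbf{0},\mathbf{0})$ remain bounded by a constant depending only on $\mathbf{T}$. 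A standard argument using A1--A5 then shows that the M-scale $s_{jl}(\mathbf{0},\mathbf{0})$ is bounded uniformly in $N$, and since $\tau_{jl}\leq s_{jl}$ (because $\rho_2\leq 1$) we conclude that there is $C_0<\infty$ with $T(\mathbf{0},\mathbf{0})\leq C_0$ for every $\mathbf{\check{T}}_N$. By minimality of the composite $\tau$-estimator, $T(\widehat{\mathbf{\beta}}_N,\widehat{\mathbf{\gamma}}_N)\leq C_0$ as well.

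Next I would derive a contradiction by exhibiting some pair $(j,l)$ for which $\tau_{jl}(\widehat{\mathbf{\beta}}_N,\widehat{\mathbf{\gamma}}_N)\to\infty$. Passing to a subsequence, write $\widehat{\mathbf{\beta}}_N=\|\widehat{\mathbf{\beta}}_N\|\mathbf{b}_N$ with $\mathbf{b}_N\to\mathbf{b}_*$, $\|\mathbf{b}_*\|=1$. By Lemma \ref{lemma:bdpgamma} (applicable since $m<\min((1-b)n-f(\mathbf{T}),bn)$) the condition number of $\Sigma_{jl}^{\ast}(\widehat{\mathbf{\gamma}}_N)$ is bounded by $K$, so the smallest eigenvalue of $\Sigma_{jl}^{\ast-1}(\widehat{\mathbf{\gamma}}_N)$ is bounded below by $1/\sqrt{K}$. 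Fix any pair $(j,l)$. By the definition of $h_{jl}(\mathbf{T})$ in \eqref{equ:hjl}, at most $h_{jl}(\mathbf{T})\leq h(\mathbf{T})$ indices $i$ satisfy $\mathbf{x}_i^{jl}\mathbf{b}_*=\mathbf{0}$; hence at least $(n-m)-h(\mathbf{T})$ of the original observations retained in $\mathbf{\check{T}}_N$ satisfy $\mathbf{x}_i^{jl}\mathbf{b}_*\neq\mathbf{0}$. For each such $i$,
\begin{equation*}
\|\mathbf{y}_i^{jl}-\mathbf{x}_i^{jl}\widehat{\mathbf{\beta}}_N\|\geq\|\widehat{\mathbf{\beta}}_N\|\,\|\mathbf{x}_i^{jl}\mathbf{b}_N\|-\|\mathbf{y}_i^{jl}\|\longrightarrow\infty,
\end{equation*}
so $m_i^{jl}(\widehat{\mathbf{\beta}}_N,\widehat{\mathbf{\gamma}}_N)\geq(1/\sqrt{K})\|\mathbf{y}_i^{jl}-\mathbf{x}_i^{jl}\widehat{\mathbf{\beta}}_N\|^{2}\to\infty$.

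Finally, the hypothesis $m<(1-b)n-f(\mathbf{T})=(1-b)n-h(\mathbf{T})-h^{\ast}(\mathbf{T})$ gives $(n-m)-h(\mathbf{T})>bn+h^{\ast}(\mathbf{T})\geq bn$, so strictly more than $bn$ indices have $m_i^{jl}(\widehat{\mathbf{\beta}}_N,\widehat{\mathbf{\gamma}}_N)\to\infty$. Invoking Lemma~A.3 of \citet{garciaben_martinez_yohai_2006} (the same tool already used in the proof of Lemma~\ref{lemma:bdpgamma}), this forces $\tau_{jl}(\widehat{\mathbf{\beta}}_N,\widehat{\mathbf{\gamma}}_N)\to\infty$, and therefore $T(\widehat{\mathbf{\beta}}_N,\widehat{\mathbf{\gamma}}_N)\to\infty$, contradicting the uniform bound $T(\widehat{\mathbf{\beta}}_N,\widehat{\mathbf{\gamma}}_N)\leq C_0$. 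The main obstacle in this outline is the verification that more than $bn$ original indices in the contaminated sample point in a direction transverse to $\mathbf{b}_*$; the clean bookkeeping is precisely what the definitions of $h_{jl}$ and $f$ in \eqref{equ:hjl}--\eqref{equ:f} are designed to supply, while the eigenvalue control from Lemma~\ref{lemma:bdpgamma} is what converts divergence of residual norms into divergence of Mahalanobis distances.
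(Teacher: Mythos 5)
Your proposal is correct and follows essentially the same route as the paper's proof: contradiction via the minimality of $T$ against a uniform bound on $T(\mathbf{0},\mathbf{0})$, eigenvalue control from Lemma \ref{lemma:bdpgamma}, a counting argument based on $h(\mathbf{T})$ and $f(\mathbf{T})$ to produce more than $[bn]$ diverging squared Mahalanobis distances, and Lemma A.3 of \citet{garciaben_martinez_yohai_2006} to conclude $\tau_{jl}\to\infty$. The only cosmetic differences are that you extract a convergent subsequence of directions $\mathbf{b}_N\to\mathbf{b}_*$ where the paper invokes the uniform constant $\delta$ from Lemma \ref{lemmadelta}, and that you establish the bound $T(\mathbf{0},\mathbf{0})\leq C_0$ at the outset rather than at the end.
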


\begin{proof}
Assume that there exists a sequence $\{\mathbf{\check{T}}_{N}\}_{N}$ with
$\mathbf{\check{T}}_{N}\in\mathcal{T}_{m}^{(\text{C})}$ such that
$\Vert\widehat{\mathbf{\beta}}(\mathbf{\check{T}}_{N})\Vert\rightarrow\infty$
as $N\rightarrow\infty$. We can assume without loss of generality that
$\widehat{\mathbf{\beta}}(\mathbf{\check{T}}_{N})/\Vert\widehat{\mathbf{\beta
}}(\mathbf{\check{T}}_{N})\Vert\rightarrow\mathbf{c}_{N}$. Then, we will show
that there exists a pair $(j,l)$ such that $\tau_{jl}(\widehat{\mathbf{\beta}%
}(\mathbf{\check{T}}_{N}),\widehat{\mathbf{\gamma}}(\mathbf{\check{T}}%
_{N}))\rightarrow\infty$ as $N\rightarrow\infty.$ Let $U_{N}$ be an orthogonal
matrix of eigenvectors of $\Sigma_{jl}^{\ast-1}(\widehat{\mathbf{\gamma}%
}(\mathbf{\check{T}}_{N}))$ and $\Lambda_{N}$ be the diagonal matrix with the
corresponding eigenvalues as in the proof of Lemma \ref{lemma:bdpgamma},
hence
\begin{equation*}
m_{Ni}^{jl}(\widehat{\mathbf{\beta}}(\mathbf{\check{T}}_{N}),\widehat
{\mathbf{\gamma}}(\mathbf{\check{T}}_{N}))=\mathbf{g}_{Ni}^{\top}\Lambda
_{N}\mathbf{g}_{Ni}.
\end{equation*}
and
\begin{equation*}
\mathbf{g}_{Ni}=\left(  U_{Ni}^{\top}\mathbf{y}_{Ni}^{jl}-U_{Ni}^{\top
}\mathbf{x}_{Ni}^{jl}\mathbf{c}_{N}\Vert\widehat{\mathbf{\beta}}%
(\mathbf{\check{T}}_{N})\Vert\right)  .
\end{equation*}
By Lemma \ref{lemmadelta} there exists $\delta>0$ such that for at least
$n-f(\mathbf{T}$) observations from $\mathbf{T}$ we have
\begin{equation}
\inf_{\Vert\mathbf{b}\Vert=1}\Vert(\mathbf{x}_{i}^{jl}\mathbf{b})\Vert
>\delta\label{del}%
\end{equation}
and we can find in any sample $\mathbf{\check{T}}_{N}$ more than
$[bn]+f(\mathbf{T})$ observations from the original sample. Therefore we can
find $[bn]+1$ indexes $i_{1},\ldots,i_{q},\ldots,i_{[bn]}$ such that
\begin{equation*}
\left\vert \left\vert \mathbf{x}_{Ni_{q}}^{jl}\frac{\widehat{\mathbf{\beta}%
}(\mathbf{\check{T}}_{N})}{\Vert\widehat{\mathbf{\beta}}(\mathbf{\check{T}%
}_{N})\Vert}\right\vert \right\vert \geq\delta,\text{ }1\leq q\leq\lbrack
bn]+1,
\end{equation*}
then,
\begin{equation*}
\lim_{N\rightarrow\infty}g_{Ni_{q}k}=\infty,\text{ }1\leq q\leq\lbrack
bn]+1,\text{ }k=1,2.
\end{equation*}
According to the Lemma \ref{lemma:bdpgamma} the diagonal elements of the
matrix $\Lambda_{N}$ are greater than some positive constant $K$, this implies
that for any vector $\mathbf{a}$
\begin{equation}
\mathbf{a}^{\top}\Lambda_{N}\mathbf{a}>K\Vert a\Vert^{2}, \label{equ:Lambda}%
\end{equation}
which leads for $1\leq q\leq\lbrack bn]+1$ to
\begin{align*}
\lim_{N\rightarrow\infty}m_{Ni_{q}}^{jl}(\widehat{\mathbf{\beta}%
}(\mathbf{\check{T}}_{N}),\widehat{\mathbf{\gamma}}(\mathbf{\check{T}}_{N}))
&  =\lim_{N\rightarrow\infty}\mathbf{g}_{Ni_{q}}^{T}\Lambda_{N}\mathbf{g}%
_{Ni_{q}}\\
&  \geq K\lim_{N\rightarrow\infty}\Vert\mathbf{g}_{Ni_{q}}\Vert^{2}\\
&  =\infty.
\end{align*}
Then, we have more than $[bn]+1$ elements $m_{Ni_{q}}^{jl}(\widehat
{\mathbf{\beta}}(\mathbf{\check{T}}_{N}),\widehat{\mathbf{\gamma}%
}(\mathbf{\check{T}}_{N}))$ going to infinity. Using Lemma A.3 of
\citet{garciaben_martinez_yohai_2006} (see also \citet{yohai_zamar_1986}) this
implies $\tau_{jl}(\widehat{\mathbf{\beta}}(\mathbf{\check{T}}_{N}%
),\widehat{\mathbf{\gamma}}(\mathbf{\check{T}}_{N}))\rightarrow\infty$ as
$N\rightarrow\infty$. Then $T(\widehat{\mathbf{\beta}}(\mathbf{\check{T}}%
_{N}),\widehat{\mathbf{\gamma}}(\mathbf{\check{T}}_{N}))$ tends to infinity
too. On the other hand if we put $\widehat{\mathbf{\beta}}_{N}=\mathbf{0}$ and
$\widehat{\mathbf{\gamma}}_{N}=\mathbf{0}$ we will have, for all pair $(j,l)$
more than $[bn]$ observations such that the corresponding squared Mahalanobis
distances are uniformly bounded on $N$ and therefore by Lemma A.1 of
\citet{garciaben_martinez_yohai_2006} (see also \citet{yohai_zamar_1986}) all
the $\tau_{jl}(\mathbf{0},\mathbf{0})$ will be uniformly bounded and therefore
$T(\mathbf{0},\mathbf{0})$ will be finite. This contradicts the definition of
the composite $\tau$-estimator for $\mathbf{\beta}$ and $\mathbf{\gamma}$.
\end{proof}

\begin{lemma}
\label{lemma:bdpeta} Consider the same assumptions as in Theorem
(\ref{teo:classicbp}). Then, for any $m<\min((1-b)n-f(\mathbf{T}),bn)$ we have
$B_{m}^{-(\text{C})}(\mathbf{T},\widehat{\mathbf{\upsilon}})>0$ and
$B_{m}^{+(\text{C})}(\mathbf{T},\widehat{\mathbf{\upsilon}})<\infty$.
\end{lemma}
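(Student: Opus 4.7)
The plan is to argue both bounds by contradiction through a rewriting of (\ref{equ:eta0}). Using $\Sigma_{jl}(1,\widehat{\mathbf{\gamma}})^{-1}=\widehat{d}_{jl}^{-1}\Sigma_{jl}^{\ast-1}(\widehat{\mathbf{\gamma}})$ with $\widehat{d}_{jl}:=|\Sigma_{jl}(1,\widehat{\mathbf{\gamma}})|^{1/2}$, equation (\ref{equ:eta0}) is equivalent to
\begin{equation*}
\frac{2}{p(p-1)n}\sum_{i=1}^{n}\sum_{j<l}\rho\!\left(\frac{m_{i}^{jl}(\widehat{\mathbf{\beta}},\widehat{\mathbf{\gamma}})}{\nu_{jl}}\right)=b,\qquad\nu_{jl}:=\widehat{\eta}\,\widehat{d}_{jl}=|\Sigma_{jl}(\widehat{\eta},\widehat{\mathbf{\gamma}})|^{1/2}.
\end{equation*}
Thus $B_{m}^{+(\text{C})}<\infty$ will follow once I show that $\|\widehat{\mathbf{\upsilon}}_{N}\|\to\infty$ forces $\nu_{jl,N}\to\infty$ for every pair (driving the LHS below $b$), and $B_{m}^{-(\text{C})}>0$ will follow once I show that $\|\widehat{\mathbf{\upsilon}}_{N}\|\to 0$ forces $\nu_{jl,N}\to 0$ for every pair (driving the LHS above $b$).

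First I would assemble uniform two-sided bounds on the pairwise Mahalanobis distances. Lemma \ref{lemma:bdpbeta} gives $\widehat{\mathbf{\beta}}_{N}$ bounded over $\check{\mathbf{T}}_{N}\in\mathcal{T}_{m}^{(\text{C})}$; Lemma \ref{lemma:bdpgamma} confines the two eigenvalues of $\Sigma_{jl}^{\ast}(\widehat{\mathbf{\gamma}}_{N})$, which have product one and ratio at most $K$, to $[K^{-1/2},K^{1/2}]$. Lemma \ref{lemmadelta} (applied with $\mathbf{u}$ ranging over unit vectors) then implies that at most $h^{\ast}$ original observations satisfy $\|\mathbf{y}_{i}^{jl}-\mathbf{x}_{i}^{jl}\widehat{\mathbf{\beta}}_{N}\|<\delta^{\ast}$, so at least $n-m-h^{\ast}>bn+h$ observations in $\check{\mathbf{T}}_{N}$ retain their original pair $(\mathbf{y}_{i},\mathbf{x}_{i})$ and satisfy $m_{i}^{jl}(\widehat{\mathbf{\beta}}_{N},\widehat{\mathbf{\gamma}}_{N})\in[c_{1},c_{2}]$ for positive constants uniform in $N$.

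For the upper bound, suppose $\|\widehat{\mathbf{\upsilon}}_{N}\|\to\infty$. Passing to a subsequence, $\widehat{\mathbf{\upsilon}}_{N}/\|\widehat{\mathbf{\upsilon}}_{N}\|\to\mathbf{v}$ of unit length and $\Sigma_{N}/\|\widehat{\mathbf{\upsilon}}_{N}\|\to M:=\sum_{j}v_{j}V_{j}$, a nonzero PSD matrix (using linear independence of $V_{0},\ldots,V_{J}$, implicit in the identifiability hypothesis). Since $M$ is a nonzero PSD matrix some diagonal entry $M_{j_{0}j_{0}}$ is strictly positive, so $\Sigma_{j_{0}j_{0},N}\to\infty$. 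For any other index $l$, $\lambda_{\max}(\Sigma_{j_{0}l,N})\geq\Sigma_{j_{0}j_{0},N}\to\infty$, and the bounded condition number from Lemma \ref{lemma:bdpgamma} forces $\lambda_{\min}(\Sigma_{j_{0}l,N})\geq\lambda_{\max}(\Sigma_{j_{0}l,N})/K\to\infty$, whence $\Sigma_{ll,N}\geq\lambda_{\min}(\Sigma_{j_{0}l,N})\to\infty$. Every diagonal entry of $\Sigma_{N}$ therefore blows up and consequently $\nu_{jl,N}=(\lambda_{\max}\lambda_{\min})^{1/2}\to\infty$ for every pair. Then $\rho(m_{i}^{jl}/\nu_{jl,N})\to 0$ on the good observations (where $m_{i}^{jl}$ is bounded) and is $\leq 1$ on the at most $m$ contaminated ones, so the LHS of the rewritten equation is bounded by $m/n+o(1)<b$ (as $m<bn$), contradicting the equation.

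For the lower bound, suppose $\|\widehat{\mathbf{\upsilon}}_{N}\|\to 0$. Every component $\widehat{\upsilon}_{j,N}\to 0$, so $\Sigma(\widehat{\eta}_{N},\widehat{\mathbf{\gamma}}_{N})\to 0$ in norm and $\nu_{jl,N}\to 0$ for every pair. On the $\geq bn+h$ good observations with $m_{i}^{jl}\geq c_{1}>0$ one has $\rho(m_{i}^{jl}/\nu_{jl,N})\to 1$, so every pairwise partial average eventually exceeds $b$ and so does the full LHS, again contradicting the equation. The main technical obstacle lies in the upper bound: a priori the divergence of $\widehat{\mathbf{\upsilon}}_{N}$ could manifest itself in a single direction of $\Sigma_{N}$ and leave some pairwise submatrices bounded, but the uniform condition-number control supplied by Lemma \ref{lemma:bdpgamma} is precisely the ingredient that propagates the blow-up from one diagonal entry of $\Sigma_{N}$ to all of them, and hence to every $\nu_{jl,N}$.
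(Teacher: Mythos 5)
Your proof is correct and follows the same overall strategy as the paper: argue by contradiction from the fixed aggregated M-scale equation (\ref{equ:eta0}), using Lemma \ref{lemmadelta} to guarantee at least $[bn]+1$ retained observations per pair whose residual norms are bounded away from $0$ and from $\infty$, and Lemma \ref{lemma:bdpbeta} and Lemma \ref{lemma:bdpgamma} to control $\widehat{\mathbf{\beta}}_N$ and the pairwise eigenvalues. For the case $\Vert\widehat{\mathbf{\upsilon}}_N\Vert\rightarrow 0$ your argument coincides with the paper's. Where you genuinely add value is the case $\Vert\widehat{\mathbf{\upsilon}}_N\Vert\rightarrow\infty$, which the paper dispatches with a single sentence ("we can similarly derive that the scale \ldots tends to $0$"). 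As you correctly observe, this step is not symmetric: divergence of $\widehat{\mathbf{\upsilon}}_N$ only forces some entries of $\Sigma(\widehat{\eta}_N,\widehat{\mathbf{\gamma}}_N)$ to blow up, and without further argument a particular $2\times 2$ submatrix $\Sigma_{jl}$ could a priori stay bounded, leaving the corresponding terms of the sum away from $0$. Your device --- normalizing $\Sigma_N$ by $\Vert\widehat{\mathbf{\upsilon}}_N\Vert$ to extract a nonzero positive semidefinite limit (which does require the linear independence of $I,V_1,\ldots,V_J$; this follows from A9 but deserves the one-line verification), locating a diverging diagonal entry, and then propagating the divergence to every other diagonal entry through the uniform condition-number bound of Lemma \ref{lemma:bdpgamma} --- supplies exactly the missing ingredient and makes the "similarly" rigorous. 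So your proposal is not merely equivalent to the paper's proof; on this half it is more complete.
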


\begin{proof}
Recall that $\mathbf{\upsilon}=(\eta,\eta\mathbf{\gamma})$ and that
$\Sigma(\eta,\mathbf{\gamma})=\eta\Sigma(1,\mathbf{\gamma})$ and therefore
from equation (\ref{equ:eta0}), $\eta$ is the solution of
\begin{equation}
\frac{2}{p(p-1)n}\sum_{i=1}^{n}\sum_{j=1}^{p-1}\sum_{l=j+1}^{p}\rho_{1}\left(
\frac{(\mathbf{y}_{i}^{jl}-\mathbf{x}_{i}^{jl}\widehat{\mathbf{\beta}})^{\top
}\Sigma_{jl}(\eta,\widehat{\mathbf{\gamma}})^{-1}(\mathbf{y}_{i}%
^{jl}-\mathbf{x}_{i}^{jl}\widehat{\mathbf{\beta}})}{s_{0}}\right)  =b,
\label{etaso}%
\end{equation}

where $0<s_{0}<\infty$ is defined as follows
\begin{equation*}
\mathbb{E}\left(  \rho\left(  \frac{v}{s_{0}}\right)  \right)  =b,\text{
}v\sim\chi_{2}^{2}.
\end{equation*}
Assume that there exists a sequence $\{\mathbf{\check{T}}_{N}\}_{N}$ with
$\mathbf{\check{T}}_{N}\in\mathcal{T}_{m}$, such that $\Vert\widehat
{\mathbf{\upsilon}}(\mathbf{\check{T}}_{n})\Vert\rightarrow0$ as
$N\rightarrow\infty$. This implies that all the eigenvalues of the
matrices $\Sigma_{jl}(\widehat{\mathbf{\upsilon}}(\mathbf{\check{T}}_{n}))$
converge to zero. Then all the eigenvalues of the matrices $\Sigma
_{jl}(\widehat{\mathbf{\upsilon}}(\mathbf{\check{T}}_{n}))^{-1}$ converge to
infinity. Let $U_{Njl}$ and $\Lambda_{Njl}=\operatorname{diag}(\lambda
_{Njl}^{+},\lambda_{Njl}^{-})$ be the eigenvectors and eigenvalues of these
matrices and let
\begin{equation*}
\mathbf{g}_{Ni}=\left(
\begin{array}
[c]{l}%
g_{Ni1}\\
g_{Ni2}%
\end{array}
\right)  =\left(  U_{Njl}^{\top}\mathbf{y}_{Ni}^{jl}-U_{Njl}^{\top}%
\mathbf{x}_{Ni}^{jl}\widehat{\mathbf{\beta}}(\mathbf{\check{T}}_{N})\right)
,
\end{equation*}
as in proof of Lemma \ref{lemma:bdpgamma}. As shown in Lemma \ref{lemmadelta}
there exists a $\delta^{\ast}>0$ such that for at least $n-f(\mathbf{T}$)
observations from $\mathbf{T}$ we have
\begin{equation*}
\inf_{\Vert\mathbf{u}\Vert=1,\mathbf{b}}\Vert\mathbf{u}^{\top}(\mathbf{y}%
_{i}^{jl}-\mathbf{x}_{i}^{jl}\mathbf{b})\Vert>\delta^{\ast},
\end{equation*}

and we can find in any sample $\mathbf{\check{T}}_{N}$ more than
$[bn]+f(\mathbf{T}$) observations from the original sample and therefore there
are $[bn]+1$ indexes $i_{1},\ldots,i_{q},\ldots,i_{[bn]+1}$ such that
\begin{equation*}
g_{N{i_{q}}1}^{2}\geq\delta^{\ast},\text{ }1\leq q\leq\lbrack bn]+1.
\end{equation*}
Then, for $1\leq q\leq\lbrack bn]+1$
\begin{align*}
(\mathbf{y}_{i_{q}}^{jl}-\mathbf{x}_{i_{q}}^{jl}\widehat{\mathbf{\beta}%
})^{\top}\Sigma_{jl}(\eta,\widehat{\mathbf{\gamma}})^{-1}(\mathbf{y}_{i_{q}%
}^{jl}-\mathbf{x}_{i_{q}}^{jl}\widehat{\mathbf{\beta}})  &  =\mathbf{g}%
_{Ni_{q}}^{\top}\Lambda_{Njl}\mathbf{g}_{Ni_{q}}\\
&  \geq g_{Ni_{q}1}^{2}\lambda_{Njl}^{+}\\
&  \geq\delta^{\ast}\lambda_{Njl}^{+}%
\end{align*}
and therefore we have%
\begin{equation*}
\lim_{N\rightarrow\infty}(\mathbf{y}_{i_{q}}^{jl}-\mathbf{x}_{i_{q}}%
^{jl}\widehat{\mathbf{\beta}})^{\top}\Sigma_{jl}(\eta,\widehat{\mathbf{\gamma
}})^{-1}(\mathbf{y}_{i_{q}}^{jl}-\mathbf{x}_{i_{q}}^{jl}\widehat
{\mathbf{\beta}})=\infty
\end{equation*}
for all pair $(j,l)$ and $1\leq q\leq\lbrack bn]+1.$ Then, the fraction of
squared Mahalanobis distances that goes to infinity in the left hand side of
equation (\ref{equ:eta0}) is going to be larger than $[bn].$ Then, according
to the Lemma A.3 of \citet{garciaben_martinez_yohai_2006} (see also
\citet{yohai_zamar_1986}), this implies that the scale should go to infinity.
This contradict the fact that according to (\ref{etaso}) this scale is always
$s_{0}$. Suppose that we assume that there exists a sequence $\mathbf{\check
{T}}_{N}$, $N\geq1$ with $\mathbf{\check{T}}_{N}\in\mathcal{T}_{m}$, such that
$\Vert\widehat{\mathbf{\upsilon}}(\mathbf{\check{T}}_{n})\Vert\rightarrow
\infty$ as $N\rightarrow\infty$. Then, we can similarly derive that the scale
of the $\widehat{\eta}(\mathbf{\check{T}}_{N})^{-1}m_{Ni}^{jl}(\widehat
{\mathbf{\beta}}(\mathbf{\check{T}}_{N}),\widehat{\mathbf{\gamma}%
}(\mathbf{\check{T}}_{N}))$ for $1\leq i\leq n,$ $1\leq j,l\leq p$ tends to 0
and this contradicts again the fact that it is constantly equal to $s_{0}$.
\end{proof}

\begin{proof}
[Proof of Theorem \ref{teo:classicbp}]The proof of Theorem \ref{teo:classicbp}
follows immediately from Lemmas \ref{lemma:bdpgamma}, \ref{lemma:bdpbeta} and
\ref{lemma:bdpeta}.
\end{proof}

\begin{proof}
[Proof of Theorem \ref{teo:independentbp}]The proof of Theorem
\ref{teo:independentbp} follows immediately from Lemmas \ref{lemma:bdpgamma},
\ref{lemma:bdpbeta} and \ref{lemma:bdpeta} once we notice that the results of
aforementioned Lemmas will continue to hold if the total number of
contaminated rows (in one or both columns) for each pair $(j,l)$ will be less
than $bn$. To ensure this fact under the independent contamination model, it
is sufficient to consider a contamination level not greater than $b/2$.
\end{proof}

\section{Asymptotic properties \label{app:asymptotic}}

Hereafter we prove the Fisher Consistency of the estimating functional
associated to the composite $\tau$-estimator. Let $(\mathbf{y,x})$ with
distribution $F$, then given $\mathbf{\beta}$ and $\mathbf{\gamma,}$ the
asymptotic M-scales $s_{jl}^{a}(\mathbf{\beta},\mathbf{\gamma},F)$ are defined
by
\begin{equation*}
E\left(  \rho_{1}\left(  \frac{m(\mathbf{y}^{jl},\mathbf{\mu}^{jl}%
(\mathbf{\beta}),\Sigma_{jl}^{\ast}(\mathbf{\gamma}))}{s_{jl}^{a}%
(\mathbf{\beta},\mathbf{\gamma},F)}\right)  \right)  =b,
\end{equation*}
and the asymptotic $\tau$-scales $\tau_{jl}^{a}(\mathbf{\beta},\mathbf{\gamma
},F)$ by
\begin{equation*}
\tau_{jl}^{a}(\mathbf{\beta},\mathbf{\gamma},F)=s_{jl}^{a}(\mathbf{\beta
},\mathbf{\gamma},F)E\left(  \rho_{2}\left(  \frac{m(\mathbf{y}^{jl}%
,\mathbf{\mu}^{jl}(\mathbf{\beta}),\Sigma_{jl}^{\ast}(\mathbf{\gamma}%
))}{s_{jl}^{a}(\mathbf{\beta},\mathbf{\gamma},F)}\right)  \right)  .
\end{equation*}
Finally, we define the asymptotic composite $\tau$ loss function as
\begin{equation*}
T^{a}(\mathbf{\beta},\mathbf{\gamma},F)=\sum_{j=1}^{p-1}\sum_{l=j+1}^{p}%
\tau_{jl}^{a}(\mathbf{\beta},\mathbf{\gamma},F).
\end{equation*}
Then the estimating functional $(\mathbf{B}(F),\mathbf{G}(F))$ of
$(\mathbf{\beta},\mathbf{\gamma})$ associated to the composite $\tau
$-estimators is defined by%
\begin{equation}
(\mathbf{B}(F),\mathbf{G}(F))=\arg\min_{(\mathbf{\beta,\gamma)}}%
T^{a}(\mathbf{\beta},\mathbf{\gamma},F), \label{esfuntau}%
\end{equation}
and the composite $\tau$-estimator of $(\mathbf{\beta},\mathbf{\gamma})$ can
be defined by
\begin{equation}
(\widehat{\mathbf{\beta}},\widehat{\mathbf{\gamma}})=(\mathbf{B}%
(F_{n}),\mathbf{G}(F_{n})), \label{fuctau}%
\end{equation}
where $F_{n}$ is the empirical distribution of $(\mathbf{y}_{1},\mathbf{x}%
_{1}),\ldots,(\mathbf{y}_{n},\mathbf{x}_{n})$.

Now we can state the theorem establishing the Fisher consistency of the
estimating functional associated to the compose $\tau$-estimators.

\begin{theorem}
\label{theorem:fisher} Let $(\mathbf{y},\mathbf{x})$ have distribution $F_{0}$
and call $H_{0}$ the marginal distribution of $\mathbf{x}$. Assume (i)
$\rho_{1}$ satisfies (A1-A5), (ii) $\rho_{2}$ satisfies A1-A6, (iii) under
$F_{0}$ A7 and A8 holds and (iv) A9. Then, if $(\mathbf{\beta},\mathbf{\gamma
})\neq(\mathbf{\beta}_{0},\mathbf{\gamma}_{0})$
\begin{equation*}
T^{a}(\mathbf{\beta},\mathbf{\gamma},F_{0})>T^{a}(\mathbf{\beta}%
_{0},\mathbf{\gamma}_{0},F_{0}),
\end{equation*}
that is, $(\mathbf{B}(F_{0}),\mathbf{G}(F_{0}))=(\mathbf{\beta}_{0}%
,\mathbf{\gamma}_{0}).$
\end{theorem}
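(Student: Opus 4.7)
The plan is to reduce the theorem to the Fisher consistency of a bivariate $\tau$-functional applied to each pair $(j,l)$ separately, and then to use A8 and A9 to transfer the pairwise conclusions to the full statement.

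\emph{Step 1: pairwise reduction.} By A7 and marginalization of elliptical distributions, for every pair $(j,l)$ the bivariate residual $\mathbf{u}^{jl}=\mathbf{y}^{jl}-\mathbf{x}^{jl}\mathbf{\beta}_{0}$ is independent of $\mathbf{x}$ and elliptically distributed with center $\mathbf{0}$, scatter $A_{0}^{jl}=\Sigma_{jl}(\eta_{0},\mathbf{\gamma}_{0})$, and a non-increasing, eventually strictly decreasing, radial density. Writing $\mathbf{\delta}=\mathbf{\beta}-\mathbf{\beta}_{0}$, the scale $s_{jl}^{a}(\mathbf{\beta},\mathbf{\gamma},F_{0})$ is characterized by
\[
E\!\left[\rho_{1}\!\left(\tfrac{(\mathbf{u}^{jl}-\mathbf{x}^{jl}\mathbf{\delta})^{\top}\Sigma_{jl}^{\ast-1}(\mathbf{\gamma})(\mathbf{u}^{jl}-\mathbf{x}^{jl}\mathbf{\delta})}{s_{jl}^{a}}\right)\right]=b,
\]
so $\tau_{jl}^{a}(\mathbf{\beta},\mathbf{\gamma},F_{0})$ coincides with the asymptotic $\tau$-scale associated with a bivariate linear regression with elliptical errors.

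\emph{Step 2: pairwise Fisher consistency.} For each $(j,l)$ I would establish two inequalities. First, conditioning on $\mathbf{x}$, the random vector $\mathbf{u}^{jl}-\mathbf{x}^{jl}\mathbf{\delta}$ has a unimodal elliptical density centered at $-\mathbf{x}^{jl}\mathbf{\delta}$, so a standard symmetry/unimodality argument (replacing $\mathbf{u}^{jl}$ by $-\mathbf{u}^{jl}$ and averaging) gives, for any positive definite $B$ with $|B|=1$ and every $s>0$,
\[
E\!\left[\rho_{1}\!\left(\tfrac{(\mathbf{u}^{jl}-\mathbf{x}^{jl}\mathbf{\delta})^{\top}B^{-1}(\mathbf{u}^{jl}-\mathbf{x}^{jl}\mathbf{\delta})}{s}\right)\right]\geq E\!\left[\rho_{1}\!\left(\tfrac{\mathbf{u}^{jl\top}B^{-1}\mathbf{u}^{jl}}{s}\right)\right],
\]
with strict inequality whenever $P_{H_{0}}(\mathbf{x}^{jl}\mathbf{\delta}\neq\mathbf{0})>0$, using A5. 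By monotonicity of the M- and $\tau$-scales in the distribution of the Mahalanobis distances this yields $\tau_{jl}^{a}(\mathbf{\beta},\mathbf{\gamma},F_{0})\geq\tau_{jl}^{a}(\mathbf{\beta}_{0},\mathbf{\gamma},F_{0})$, strictly in the same regime. Second, for $\mathbf{\beta}=\mathbf{\beta}_{0}$, I would invoke the $\tau$-scale Fisher-consistency result for elliptical scatter of \citet{yohai_zamar_1988}: the map $B\mapsto \tau\text{-scale of }\mathbf{u}^{jl\top}B^{-1}\mathbf{u}^{jl}$ on unit-determinant positive definite matrices is uniquely minimized at the normalized true scatter $B=A_{0}^{jl}/|A_{0}^{jl}|^{1/2}=\Sigma_{jl}^{\ast}(\mathbf{\gamma}_{0})$. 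Condition A6 is the crucial ingredient here, as it guarantees that off-shape candidates produce a strictly larger $\tau$-scale and not merely a larger M-scale. Consequently $\tau_{jl}^{a}(\mathbf{\beta}_{0},\mathbf{\gamma},F_{0})\geq\tau_{jl}^{a}(\mathbf{\beta}_{0},\mathbf{\gamma}_{0},F_{0})$ with strict inequality unless $\Sigma_{jl}^{\ast}(\mathbf{\gamma})=\Sigma_{jl}^{\ast}(\mathbf{\gamma}_{0})$.

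\emph{Step 3: summation and identification.} Adding the inequalities of Step 2 over all $p(p-1)/2$ pairs gives $T^{a}(\mathbf{\beta},\mathbf{\gamma},F_{0})\geq T^{a}(\mathbf{\beta}_{0},\mathbf{\gamma}_{0},F_{0})$. To obtain strict inequality when $(\mathbf{\beta},\mathbf{\gamma})\neq(\mathbf{\beta}_{0},\mathbf{\gamma}_{0})$ I would distinguish two cases. If $\mathbf{\beta}\neq\mathbf{\beta}_{0}$, then A8 gives an index $j$ such that $P_{H_{0}}(\mathbf{x}_{j}(\mathbf{\beta}-\mathbf{\beta}_{0})\neq 0)>0$; for any $l\neq j$ this makes $P_{H_{0}}(\mathbf{x}^{jl}(\mathbf{\beta}-\mathbf{\beta}_{0})\neq\mathbf{0})>0$ and the first inequality of Step 2 is strict at $(j,l)$. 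If $\mathbf{\beta}=\mathbf{\beta}_{0}$ but $\mathbf{\gamma}\neq\mathbf{\gamma}_{0}$, suppose towards contradiction that the second inequality were an equality at every pair. Then $\Sigma_{jl}(1,\mathbf{\gamma})=r_{jl}\Sigma_{jl}(1,\mathbf{\gamma}_{0})$ for some $r_{jl}>0$ and every $(j,l)$; matching the $(j,j)$ diagonal entry across varying $l$ forces $r_{jl}$ to depend only on $j$, and matching across $j$ then forces $r_{jl}\equiv\alpha$ for a single $\alpha>0$. Hence $\Sigma(1,\mathbf{\gamma})=\alpha\,\Sigma(1,\mathbf{\gamma}_{0})=\Sigma(\alpha,\mathbf{\gamma}_{0})$, contradicting A9. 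In either case at least one pairwise inequality is strict, giving $T^{a}(\mathbf{\beta},\mathbf{\gamma},F_{0})>T^{a}(\mathbf{\beta}_{0},\mathbf{\gamma}_{0},F_{0})$, and the claim $(\mathbf{B}(F_{0}),\mathbf{G}(F_{0}))=(\mathbf{\beta}_{0},\mathbf{\gamma}_{0})$ follows from~(\ref{esfuntau}).

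The hard part will be the bivariate scatter-consistency in Step 2: although essentially contained in \citet{yohai_zamar_1988}, one has to show carefully that the inequalities for the M-scale (controlled by $\rho_{1}$ via A5) propagate as \emph{strict} inequalities to the $\tau$-scale, which is where A6 on $\rho_{2}$ is needed. The combinatorial passage in Step 3 from equality of all normalized $2\times 2$ submatrices to proportionality of the full $p\times p$ matrices is elementary, but must be executed explicitly to invoke A9.
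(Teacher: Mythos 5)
Your proposal is correct and follows essentially the same route as the paper: a pairwise reduction, a key inequality stating that the expected $\rho$ of the pairwise Mahalanobis distances is minimized at the true center and normalized scatter, propagation from the M-scale to the $\tau$-scale, and then exactly the same identification step --- the union bound over the rows of $\mathbf{x}$ for A8, and the combinatorial matching of the proportionality constants across overlapping $2\times2$ blocks for A9. Two remarks on where your sketch and the paper diverge. First, the paper proves the pairwise inequality in one shot (Lemma \ref{lemma:two}, which allows a simultaneous shift $\mathbf{v}$ and a wrong shape $\Sigma$, derived from Lemma A.10 of \citet{garciaben_martinez_yohai_2006}), whereas you factor it as $\tau_{jl}^{a}(\mathbf{\beta},\mathbf{\gamma})\geq\tau_{jl}^{a}(\mathbf{\beta}_{0},\mathbf{\gamma})\geq\tau_{jl}^{a}(\mathbf{\beta}_{0},\mathbf{\gamma}_{0})$; both factorizations work, but your justification of the first factor by ``replacing $\mathbf{u}^{jl}$ by $-\mathbf{u}^{jl}$ and averaging'' does not go through for a bounded (hence non-convex) $\rho$ --- the correct tool is Anderson's lemma applied to the elliptical level sets $\{(\mathbf{u}-\mathbf{c})^{\top}B^{-1}(\mathbf{u}-\mathbf{c})\leq r\}$, which is precisely what the cited lemma encapsulates. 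Second, A6 is not what makes the $\tau$-scale inequality strict: it only yields the (weak) monotonicity of $s\mapsto s\,E(\rho_{2}(v/s))$ used to pass from $s_{jl}^{a}(\mathbf{\beta},\mathbf{\gamma})$ to $s_{jl}^{a}(\mathbf{\beta}_{0},\mathbf{\gamma}_{0})$; the strictness is then inherited from the strict version of Lemma \ref{lemma:two} applied to $\rho_{2}$ at the fixed scale $s_{jl}^{a}(\mathbf{\beta}_{0},\mathbf{\gamma}_{0})$, exactly as for the M-scale.
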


The following lemmas are required to prove this Theorem.

\begin{lemma}
[\citet{garciaben_martinez_yohai_2006}, A.10]\label{lemma:one} Suppose that
$\rho$ satisfies A1-A5 and $\mathbf{u}$ is a random vector of dimension $h$
with density given by (\ref{denseli}) with $\Sigma=\Sigma_{0}$ and
$f_{0}^{\ast}$ non increasing and with at least one point of decrease in the
interval where $\rho$ is strictly increasing. Let $\mathbf{v}$ be a random
vector independent of $\mathbf{u}$ and $\Sigma$ a scatter matrix such that
$|\Sigma|=|\Sigma_{0}|.$ Then%
\begin{equation}
E\left(  \rho\left(  (\mathbf{u}-\mathbf{v})^{\top}\Sigma^{-1}(\mathbf{u}%
-\mathbf{v})\right)  \right)  \geq E\left(  \rho\left(  \mathbf{u}^{\top
}\Sigma_{0}^{-1}\mathbf{u}\right)  \right)  . \label{lem10-1}%
\end{equation}
Moreover, if either (i) $P(\mathbf{v}\neq\mathbf{0})>0$ or (ii) $\Sigma
\neq\Sigma_{0}$, then
\begin{equation}
E\left(  \rho\left(  (\mathbf{u}-\mathbf{v})^{\top}\Sigma^{-1}(\mathbf{u}%
-\mathbf{v})\right)  \right)  >E\left(  \rho\left(  \mathbf{u}^{\top}%
\Sigma_{0}^{-1}\mathbf{u}\right)  \right)  . \label{lem10-2}%
\end{equation}

\end{lemma}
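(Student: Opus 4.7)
My plan is to reduce the inequality to a geometric comparison of the density of $\mathbf{u}$ integrated over two ellipsoids of equal Lebesgue volume, and then apply a bathtub/Anderson-type principle. First, using the independence of $\mathbf{u}$ and $\mathbf{v}$, I would condition on $\mathbf{v}=\mathbf{v}_{0}$, so that it suffices to prove
\[
g(\mathbf{v}_{0}) := E\left[\rho\left((\mathbf{u}-\mathbf{v}_{0})^{\top}\Sigma^{-1}(\mathbf{u}-\mathbf{v}_{0})\right)\right] \;\geq\; E\left[\rho\left(\mathbf{u}^{\top}\Sigma_{0}^{-1}\mathbf{u}\right)\right]
\]
for every fixed $\mathbf{v}_{0}\in\mathbb{R}^{h}$, with strict inequality whenever $\Sigma\neq\Sigma_{0}$ (for every $\mathbf{v}_{0}$) or whenever $\mathbf{v}_{0}\neq\mathbf{0}$ (even if $\Sigma=\Sigma_{0}$). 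Integrating against the distribution of $\mathbf{v}$ then covers cases (i) and (ii) simultaneously, since under (ii) the strict bound holds for every $\mathbf{v}_{0}$, while under (i) it holds on a set of positive $\mathbf{v}$-probability.

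Second, using A1--A3, $\rho$ admits the layer-cake representation $\rho(t)=\int_{0}^{\infty}\mathbf{1}\{t\geq s\}\,d\mu(s)$, with $\mu$ the non-negative Lebesgue--Stieltjes measure induced by $\rho$, and Fubini reduces the claim to showing
\[
\int_{E_{s}^{\prime}}\phi(\mathbf{x})\,d\mathbf{x} \;\leq\; \int_{E_{s}}\phi(\mathbf{x})\,d\mathbf{x}
\]
for each $s>0$, where $\phi(\mathbf{x})=f_{0}^{\ast}(\mathbf{x}^{\top}\Sigma_{0}^{-1}\mathbf{x})/|\Sigma_{0}|^{1/2}$ is the density of $\mathbf{u}$, $E_{s}=\{\mathbf{x}:\mathbf{x}^{\top}\Sigma_{0}^{-1}\mathbf{x}\leq s\}$ and $E_{s}^{\prime}=\{\mathbf{x}:(\mathbf{x}-\mathbf{v}_{0})^{\top}\Sigma^{-1}(\mathbf{x}-\mathbf{v}_{0})\leq s\}$. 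The two ellipsoids share the same Lebesgue volume, because both equal $V_{h}s^{h/2}|\Sigma_{0}|^{1/2}$ (with $V_{h}$ the volume of the unit ball in $\mathbb{R}^{h}$) under the hypothesis $|\Sigma|=|\Sigma_{0}|$. Since $\phi$ is non-increasing along the $\Sigma_{0}$-level sets and its super-level sets are exactly the $E_{s}$, the classical bathtub principle yields the comparison: among all sets of that fixed volume, the integral of $\phi$ is maximised precisely by $E_{s}$, giving (\ref{lem10-1}).

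For the strict case I would identify a range of $s$-values of positive $\mu$-measure on which the bathtub estimate is strict. If $\mathbf{v}_{0}\neq\mathbf{0}$ or $\Sigma\neq\Sigma_{0}$, the symmetric difference $E_{s}\,\triangle\,E_{s}^{\prime}$ has positive Lebesgue measure for every $s$ in a non-trivial open range, and on this range the bathtub comparison is strict whenever the portion of $E_{s}^{\prime}\setminus E_{s}$ lies inside the region where $\phi$ is strictly smaller than on the corresponding piece of $E_{s}\setminus E_{s}^{\prime}$, i.e., where $f_{0}^{\ast}$ is not locally constant. The main obstacle is aligning the support of $\mu$ with the strict-decrease region of $f_{0}^{\ast}$: by A5, $\rho$ is strictly increasing up to the level where it attains its supremum $1$, so $\mu$ charges every subinterval within that region, and the hypothesis that $f_{0}^{\ast}$ has at least one point of decrease inside the interval where $\rho$ is strictly increasing places positive $\mu$-mass on $s$-values where the bathtub inequality is strict. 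Integrating the strict pointwise comparison against $d\mu$ yields $g(\mathbf{v}_{0}) > E[\rho(\mathbf{u}^{\top}\Sigma_{0}^{-1}\mathbf{u})]$, and taking expectation over $\mathbf{v}$ finishes the strict inequality (\ref{lem10-2}).
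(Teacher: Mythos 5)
The paper never proves this lemma: it is imported verbatim from \citet{garciaben_martinez_yohai_2006} (their Lemma A.10), so there is no internal proof to compare against and your argument has to stand on its own. The first half does. Conditioning on $\mathbf{v}=\mathbf{v}_{0}$, writing $\rho(t)=\int_{0}^{\infty}\mathbf{1}\{t\geq s\}\,d\mu(s)$ (legitimate by A1--A4) and applying Fubini reduces (\ref{lem10-1}) to the comparison $\int_{E_{s}^{\prime}}\phi\leq\int_{E_{s}}\phi$ for two ellipsoids of equal Lebesgue volume; the volume identity under $|\Sigma|=|\Sigma_{0}|$ is right, and the bathtub step is airtight because $\phi\geq f_{0}^{\ast}(s)/|\Sigma_{0}|^{1/2}$ on $E_{s}\setminus E_{s}^{\prime}$, $\phi\leq f_{0}^{\ast}(s)/|\Sigma_{0}|^{1/2}$ on $E_{s}^{\prime}\setminus E_{s}$, and the two pieces have equal measure. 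This gives the weak inequality for every $s$ and every $\mathbf{v}_{0}$.

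The gap is in the strict inequality, which is the only nontrivial part of the lemma. You write that the bathtub comparison is strict ``whenever the portion of $E_{s}^{\prime}\setminus E_{s}$ lies inside the region where $f_{0}^{\ast}$ is not locally constant,'' and then assert that the hypothesis on $f_{0}^{\ast}$ places positive $\mu$-mass on such $s$ --- but you never verify that this geometric condition actually holds for any $s$, and for a fixed $s$ it can genuinely fail: if $\Sigma=\Sigma_{0}$ and $E_{s}^{\prime}=E_{s}+\mathbf{v}_{0}$ with $\mathbf{v}_{0}$ small, the set $E_{s}\setminus E_{s}^{\prime}$ is a thin lune on which the quadratic form $m=\mathbf{x}^{\top}\Sigma_{0}^{-1}\mathbf{x}$ only ranges over $((\sqrt{s}-\Vert\mathbf{v}_{0}\Vert_{\Sigma_{0}^{-1}})^{2},\,s]$, so if $f_{0}^{\ast}$ is constant on that range the comparison at that $s$ is an equality. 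What must be shown, and is missing, is the following: letting $u_{1}=\inf\{u:\rho(u)=1\}$ and $s_{0}<u_{1}$ be a point of decrease of $f_{0}^{\ast}$, there is a nondegenerate interval of values $s\in(s_{0},u_{1})$ for which $E_{s}\setminus E_{s}^{\prime}$ has positive Lebesgue measure inside $\{m<s_{0}-\epsilon\}$ --- for a translate this is the tip of the lune in the direction $-\mathbf{v}_{0}$, reachable exactly when $(\sqrt{s}-\Vert\mathbf{v}_{0}\Vert_{\Sigma_{0}^{-1}})^{2}<s_{0}-\epsilon<s_{0}+\epsilon<s$, and for $\Sigma\neq\Sigma_{0}$ with equal determinant it is a cone around a direction $\mathbf{e}$ with $\mathbf{e}^{\top}\Sigma^{-1}\mathbf{e}>\mathbf{e}^{\top}\Sigma_{0}^{-1}\mathbf{e}$, on which $m$ drops to a fixed fraction of $s$. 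On that set $\phi$ strictly exceeds $f_{0}^{\ast}(s)/|\Sigma_{0}|^{1/2}$, the comparison is strict there, and A5 guarantees $\mu$ charges the interval, yielding (\ref{lem10-2}). This verification is short and fits your scheme, but as written the decisive step is asserted rather than proved.
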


Using the above result we can prove the following this Lemma.

\begin{lemma}
\label{lemma:two} Suppose that $\rho$ satisfies A1-A5 and let $\mathbf{u}$ and
$\mathbf{v}$ be as in Lemma \ref{lemma:one} and let $\Sigma$ be a $h\times h$
positive definite symmetric matrix. Put $\Sigma_{0}^{\ast}=\Sigma_{0}%
/|\Sigma_{0}|^{1/h}$ and $\Sigma^{\ast}=\Sigma/|\Sigma|^{1/h}$, then
\begin{equation}
E\left(  \rho\left(  (\mathbf{u}-\mathbf{v})^{\top}\Sigma^{\ast-1}%
(\mathbf{u}-\mathbf{v})\right)  \right)  \geq E\left(  \rho\left(
\mathbf{u}^{\top}\Sigma_{0}^{\ast-1}\mathbf{u}\right)  \right)  .
\label{quaop}%
\end{equation}
Moreover, suppose that either (i) $P(\mathbf{v}\neq\mathbf{0})>0$ or (ii)
$\Sigma\neq\alpha\Sigma_{0}$ for some $\alpha>0$ then
\begin{equation}
E\left(  \rho\left(  (\mathbf{u}-\mathbf{v})^{\top}\Sigma^{\ast-1}%
(\mathbf{u}-\mathbf{v})\right)  \right)  >E\left(  \rho\left(  \mathbf{u}%
^{\top}\Sigma_{0}^{\ast-1}\mathbf{u}\right)  \right)  . \label{lem2fin}%
\end{equation}

\end{lemma}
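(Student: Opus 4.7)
The plan is to reduce Lemma \ref{lemma:two} to Lemma \ref{lemma:one} by a joint rescaling that makes the scatter matrices of $\mathbf{u}$ and the comparison matrix $\Sigma^{\ast}$ share the same determinant. Set $c=|\Sigma_0|^{1/h}$ (so $c>0$), and define $\mathbf{u}'=\mathbf{u}/c^{1/2}$ and $\mathbf{v}'=\mathbf{v}/c^{1/2}$. A direct change-of-variable shows that $\mathbf{u}'$ has an elliptical density of the form (\ref{denseli}) with scatter matrix $\Sigma_0/c=\Sigma_0^{\ast}$, which has determinant $1$. Moreover $\mathbf{v}'$ remains independent of $\mathbf{u}'$, and $\Sigma_0^{\ast}$ inherits the assumption that the radial part $f_0^{\ast}$ is non-increasing with a point of decrease where the relevant $\rho$ is strictly increasing.

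Next, introduce the rescaled loss $\tilde\rho(t)=\rho(ct)$. Since $c>0$, $\tilde\rho$ still satisfies A1--A5 (A4 follows from $\sup_v\tilde\rho(v)=\sup_w\rho(w)=1$, and monotonicity/continuity/strict monotonicity properties are preserved pointwise under positive rescaling of the argument). A routine manipulation gives the identities
\begin{equation*}
(\mathbf{u}-\mathbf{v})^{\top}\Sigma^{\ast-1}(\mathbf{u}-\mathbf{v})=c\,(\mathbf{u}'-\mathbf{v}')^{\top}\Sigma^{\ast-1}(\mathbf{u}'-\mathbf{v}'),\qquad \mathbf{u}^{\top}\Sigma_0^{\ast-1}\mathbf{u}=c\,\mathbf{u}'^{\top}\Sigma_0^{\ast-1}\mathbf{u}',
\end{equation*}
so the two expectations in (\ref{quaop}) equal $E(\tilde\rho((\mathbf{u}'-\mathbf{v}')^{\top}\Sigma^{\ast-1}(\mathbf{u}'-\mathbf{v}')))$ and $E(\tilde\rho(\mathbf{u}'^{\top}\Sigma_0^{\ast-1}\mathbf{u}'))$ respectively. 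Because $|\Sigma^{\ast}|=|\Sigma_0^{\ast}|=1$, Lemma \ref{lemma:one} applies verbatim to $(\mathbf{u}',\mathbf{v}',\Sigma_0^{\ast},\Sigma^{\ast},\tilde\rho)$ and yields the weak inequality (\ref{quaop}).

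For the strict inequality (\ref{lem2fin}), both alternative hypotheses transfer cleanly under the rescaling. Case (i) is immediate since $P(\mathbf{v}\neq\mathbf{0})=P(\mathbf{v}'\neq\mathbf{0})$. For case (ii) I need the equivalence
\begin{equation*}
\Sigma\neq\alpha\Sigma_0\text{ for every }\alpha>0\iff\Sigma^{\ast}\neq\Sigma_0^{\ast},
\end{equation*}
which follows by computing $\Sigma^{\ast}=\Sigma_0^{\ast}$ directly from $\Sigma=\alpha\Sigma_0$, and, in the reverse direction, by noting that $\Sigma^{\ast}=\Sigma_0^{\ast}$ forces $\Sigma=(|\Sigma|/|\Sigma_0|)^{1/h}\Sigma_0$. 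Hence under (ii) we have $\Sigma^{\ast}\neq\Sigma_0^{\ast}$, and Lemma \ref{lemma:one} (strict case) applied to the primed data yields (\ref{lem2fin}).

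There is no substantial obstacle; the only point that deserves care is verifying that $\tilde\rho$ really does satisfy A1--A5 and that the hypothesis about $f_0^{\ast}$ having a point of decrease in the interval where $\tilde\rho$ is strictly increasing is inherited from the original hypothesis on $\rho$ (which is automatic because that interval is a linear rescaling of the interval for $\rho$ and both contain a neighborhood of $0$, where $f_0^{\ast}$ is assumed strictly decreasing).
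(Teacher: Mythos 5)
Your proof is correct and uses essentially the same mechanism as the paper: both define the rescaled loss $\tilde\rho(t)=\rho(|\Sigma_0|^{1/h}\,t)$ and reduce to Lemma \ref{lemma:one} applied to two scatter matrices of equal determinant (the paper normalizes $\Sigma$ to $\Sigma_1=\Sigma^{\ast}|\Sigma_0|^{1/h}$ with $|\Sigma_1|=|\Sigma_0|$, while you normalize both matrices to determinant one by rescaling the data, which is the same bookkeeping). The transfer of the strictness hypotheses and the check that $\tilde\rho$ satisfies A1--A5 are handled correctly.
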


\begin{proof}
Let $\tilde{\rho}(u)=\rho(u|\Sigma_{0}|^{1/h})$. Clearly $\tilde{\rho}$
satisfies A1-A5 too. Put $\Sigma_{1}=\Sigma|\Sigma|^{-1/h}|\Sigma_{0}|^{1/h}$
and note that $\Sigma_{1}$ is different from $\Sigma_{0}$ but with the same
determinant. We have that%
\begin{align*}
E\left(  \rho\left(  \mathbf{u}^{\top}\Sigma_{0}^{\ast-1}\mathbf{u}\right)
\right)   &  =E\left(  \rho\left(  \mathbf{u}^{\top}\Sigma_{0}^{-1}|\Sigma
_{0}|^{1/h}\mathbf{u}\right)  \right) \\
&  =E\left(  \tilde{\rho}\left(  \mathbf{u}^{\top}\Sigma_{0}^{-1}%
\mathbf{u}\right)  \right)
\end{align*}
and%
\begin{equation*}
E\left(  \rho\left(  (\mathbf{u}-\mathbf{v})^{\top}\Sigma^{\ast
-1}(\mathbf{u}-\mathbf{v})\right)  \right)  =E\left(  \tilde{\rho}\left(
(\mathbf{u}-\mathbf{v})^{\top}\Sigma_{1}^{-1}(\mathbf{u}-\mathbf{v})\right)
\right)  .
\end{equation*}
Then Lemma \ref{lemma:two} follows from Lemma \ref{lemma:one}
\end{proof}

\begin{lemma}
\label{lemma:three} Assume that (i) $\rho_{1}$ satisfies A1-A5, (ii) $\rho
_{2}$ satisfies A1, A6 (iii) A7 holds with $\mathbf{\beta}=\mathbf{\beta}_{0}$
and $\mathbf{\gamma}=\mathbf{\gamma}_{0}$. Then, $s_{jl}^{a}(\mathbf{\beta
},\mathbf{\gamma})\geq s_{jl}^{a}(\mathbf{\beta}_{0},\mathbf{\gamma}_{0})$ and
$\tau_{jl}^{a}(\mathbf{\beta},\mathbf{\gamma})\geq\tau_{jl}^{a}(\mathbf{\beta
}_{0},\mathbf{\gamma}_{0})$ for all couples $(j,l)$ Moreover, if for the pairs
$(j,l)$ either
\begin{equation}
P(\mathbf{x}^{jl}\mathbf{\beta}-\mathbf{x}^{jl}\mathbf{\beta}_{0}%
\neq\mathbf{0})>0\quad\text{or}\quad\Sigma_{jl}(1,\mathbf{\gamma})\neq
\alpha\Sigma_{jl}(1,\mathbf{\gamma}_{0}) \label{assor}%
\end{equation}
for all $\alpha>0,$ then $s_{jl}^{a}(\mathbf{\beta},\mathbf{\gamma}%
)>s_{jl}^{a}(\mathbf{\beta}_{0},\mathbf{\gamma}_{0})$ and $\tau_{jl}%
^{a}(\mathbf{\beta},\mathbf{\gamma})>\tau_{jl}^{a}(\mathbf{\beta}%
_{0},\mathbf{\gamma}_{0})$.
\end{lemma}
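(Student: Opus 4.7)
The plan is to reduce both inequalities to applications of Lemma~\ref{lemma:two} through a single change of variables, and then to bridge the M-scale bound to the $\tau$-scale bound via assumption A6 and an auxiliary composite rho function.

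I would first set $\mathbf{u} := \mathbf{y}^{jl} - \mathbf{x}^{jl}\mathbf{\beta}_0$ (the 2-dimensional residual at the true parameter restricted to the pair $(j,l)$) and $\mathbf{v} := \mathbf{x}^{jl}(\mathbf{\beta} - \mathbf{\beta}_0)$. Under A7, the full residual is independent of $\mathbf{x}$, so $\mathbf{u}$ is independent of $\mathbf{v}$, and the 2-dimensional marginal inherits ellipticity with shape matrix proportional to $\Sigma_{jl}(\eta_0,\mathbf{\gamma}_0)$; the non-increasing property of the generator is preserved under marginalization of elliptical families, via the standard integral representation of $g_2$ as a weighted integral of $g$. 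The two Mahalanobis forms inside the scale definitions then become $\mathbf{u}^\top \Sigma_{jl}^{*-1}(\mathbf{\gamma}_0)\mathbf{u}$ and $(\mathbf{u}-\mathbf{v})^\top \Sigma_{jl}^{*-1}(\mathbf{\gamma})(\mathbf{u}-\mathbf{v})$. Since both starred matrices have determinant one, Lemma~\ref{lemma:two} applies with $\Sigma_0 = \Sigma_{jl}^{*}(\mathbf{\gamma}_0)$ and $\Sigma = \Sigma_{jl}^{*}(\mathbf{\gamma})$; moreover, taking determinants shows that $\Sigma = \alpha\Sigma_0$ forces $\alpha=1$, so the alternative ``$\Sigma \neq \alpha\Sigma_0$ for every $\alpha>0$'' appearing in Lemma~\ref{lemma:two} coincides with the second clause of~(\ref{assor}).

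Writing $s_0 := s_{jl}^a(\mathbf{\beta}_0,\mathbf{\gamma}_0)$ and $s := s_{jl}^a(\mathbf{\beta},\mathbf{\gamma})$, I would apply Lemma~\ref{lemma:two} with the rescaled function $\tilde\rho(v) := \rho_1(v/s_0)$ (which inherits A1--A5) to obtain $E[\rho_1((\mathbf{u}-\mathbf{v})^\top \Sigma_{jl}^{*-1}(\mathbf{\gamma})(\mathbf{u}-\mathbf{v})/s_0)] \geq E[\rho_1(\mathbf{u}^\top \Sigma_{jl}^{*-1}(\mathbf{\gamma}_0)\mathbf{u}/s_0)] = b$. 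Since $\sigma\mapsto E[\rho_1(m_{jl}(\mathbf{\beta},\mathbf{\gamma})/\sigma)]$ is non-increasing in $\sigma$ by A2 and equals $b$ at $\sigma=s$, this forces $s \geq s_0$; under~(\ref{assor}) the strict part of Lemma~\ref{lemma:two} combined with A5 upgrades the inequality to $s > s_0$.

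The delicate step is the $\tau$-scale comparison, since $\rho_2$ here is only required to satisfy A1 and A6, so Lemma~\ref{lemma:two} cannot be invoked with $\rho_2$ directly. My plan is to introduce the composite function $\rho^\sharp_W(v) := W\rho_1(v) + \rho_2(v)$ for a sufficiently large constant $W>0$: A6 gives the bound $\psi_2(v) < 2\rho_2(v)/v$, so one expects $W\psi_1$ to eventually dominate any non-monotone piece of $\psi_2$ and make $\rho^\sharp_W$ inherit A1--A5 from $\rho_1$. Exploiting the M-scale identity $E\rho_1(m/s)=b$ (and its analogue at $(\mathbf{\beta}_0,\mathbf{\gamma}_0)$) one rewrites $\tau_{jl}^a(\mathbf{\beta},\mathbf{\gamma}) = s E\rho^\sharp_W(m/s) - Wbs$ and $\tau_{jl}^a(\mathbf{\beta}_0,\mathbf{\gamma}_0) = s_0 E\rho^\sharp_W(m^*/s_0) - Wbs_0$. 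Lemma~\ref{lemma:two} applied to $\rho^\sharp_W$ then handles the distributional comparison at any fixed scale, and the remaining task is to control the scale bridge from $s_0$ to $s$: one must show that the scalar map $\sigma \mapsto \sigma E\rho^\sharp_W(m^*/\sigma) - Wb\sigma$ is non-decreasing on $[s_0,s]$. This reduces to a sign analysis of $E[\rho_2(m^*/\sigma) - (m^*/\sigma)\psi_2(m^*/\sigma)]$ combined with the M-scale constraint at $\sigma=s_0$ and A6. This scale-bridging step, and the uniform choice of $W$ that makes both ingredients compatible, is what I expect to be the main technical obstacle; the strict version of the final inequality under~(\ref{assor}) then follows by propagating the strict part of Lemma~\ref{lemma:two} together with the strict inequality $2\rho_2(v) - v\psi_2(v) > 0$ of A6.
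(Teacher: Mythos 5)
Your treatment of the M-scale half is correct and coincides with the paper's: the change of variables $\mathbf{u}=\mathbf{y}^{jl}-\mathbf{x}^{jl}\mathbf{\beta}_{0}$, $\mathbf{v}=\mathbf{x}^{jl}(\mathbf{\beta}-\mathbf{\beta}_{0})$, the independence of $\mathbf{u}$ and $\mathbf{v}$ from A7, the application of Lemma \ref{lemma:two} at the fixed scale $s_{0}=s_{jl}^{a}(\mathbf{\beta}_{0},\mathbf{\gamma}_{0})$, and the monotonicity in $\sigma$ of $E[\rho_{1}(m/\sigma)]$ to conclude $s\geq s_{0}$ (strictly under (\ref{assor})). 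The gap is in the $\tau$-scale half. Your premise --- that Lemma \ref{lemma:two} cannot be invoked with $\rho_{2}$ --- comes from reading ``$\rho_{2}$ satisfies A1, A6'' literally; in context this is a typo for A1--A6 (compare the hypotheses of Theorem \ref{theorem:fisher} and Theorem \ref{teo:consistency}), and the paper's proof does apply Lemma \ref{lemma:two} to $\rho_{2}$ directly. Its chain is
\begin{equation*}
\tau_{jl}^{a}(\mathbf{\beta},\mathbf{\gamma})=s\,E\rho_{2}(m/s)\geq s_{0}\,E\rho_{2}(m/s_{0})\geq s_{0}\,E\rho_{2}(m^{\ast}/s_{0})=\tau_{jl}^{a}(\mathbf{\beta}_{0},\mathbf{\gamma}_{0}),
\end{equation*}
where the first inequality is the monotonicity of $\sigma\mapsto\sigma E\rho_{2}(v/\sigma)$ under A6 (Lemma A.8 of \citet{garciaben_martinez_yohai_2006}), applied with the \emph{same} random variable $m$, and the second is Lemma \ref{lemma:two} at the fixed scale $s_{0}$.

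Your substitute construction does not close the argument, for two concrete reasons. First, $\rho_{W}^{\sharp}=W\rho_{1}+\rho_{2}$ need not satisfy A2 for any finite $W$: by A4, $\rho_{1}$ is bounded, so $\psi_{1}$ vanishes identically on the region where $\rho_{1}$ has reached its supremum (which happens for the redescending $\rho$'s of (\ref{optimal})); if $\psi_{2}<0$ anywhere in that region, $W\psi_{1}+\psi_{2}<0$ there for every $W$, and A4's normalization is violated in any case. Second, and more seriously, your scale bridge points the wrong way. Writing $h(\sigma)=\sigma E\rho_{W}^{\sharp}(m^{\ast}/\sigma)-Wb\sigma=W\sigma\bigl(E\rho_{1}(m^{\ast}/\sigma)-b\bigr)+\sigma E\rho_{2}(m^{\ast}/\sigma)$, the first summand is $0$ at $\sigma=s_{0}$ and non-positive (generically strictly negative) for $\sigma>s_{0}$ because $E\rho_{1}(m^{\ast}/\sigma)$ is non-increasing in $\sigma$; it is multiplied by the large constant $W$, while the second summand gains only a $W$-independent amount. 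Hence for $W$ large one gets $h(s)<h(s_{0})$ whenever $s>s_{0}$, which is the opposite of the monotonicity you need on $[s_{0},s]$. So the step you flag as ``the main technical obstacle'' is not merely unfinished; as formulated it fails in the regime of large $W$ that the construction requires. The fix is to drop the auxiliary function, take $\rho_{2}$ to satisfy A1--A6, and argue as in the displayed chain above.
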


\begin{proof}
Let $\mathbf{q}=\mathbf{x}(\mathbf{\beta}-\mathbf{\beta}_{0})$, then
$\mathbf{y}-\mathbf{x}\mathbf{\beta}=\mathbf{u}-\mathbf{q}$, where
$\mathbf{u}=\mathbf{y}-\mathbf{x}\mathbf{\beta}$. Observe that $\mathbf{q}$
depends only on $\mathbf{x}$ and hence it is independent of $\mathbf{u}$.
Moreover $\mathbf{u}^{jl}$ has an elliptical distribution with density of the
form%
\begin{equation*}
\frac{g(\mathbf{u}^{jl\top}\Sigma_{jl}(\eta_{0},\mathbf{\gamma}_{0}%
)^{-1}\mathbf{u}^{jl})}{\left\vert \Sigma_{jl}(\eta_{0},\mathbf{\gamma}%
_{0})\right\vert ^{1/2}},
\end{equation*}
where $g$ is non increasing and strictly increasing in a neighborhood of $0$.
Them, by Lemma \ref{lemma:two} we have%
\begin{align}
&  E\left(  \rho_{1}\left(  \frac{m(\mathbf{y}^{jl},\mathbf{\mu}%
^{jl}(\mathbf{\beta}),\Sigma_{jl}^{\ast}(\mathbf{\gamma}))}{s_{jl}%
^{a}(\mathbf{\beta}_{0},\mathbf{\gamma}_{0})}\right)  \right)  =\nonumber\\
&  =E\left(  \rho_{1}\left(  \frac{\left(  \mathbf{x}^{jl}(\mathbf{\beta}%
_{0}-\mathbf{\beta})+\mathbf{u}^{jl}\right)  ^{\top}\Sigma_{jl}^{\ast
-1}(\mathbf{\gamma})\left(  \mathbf{x}^{jl}(\mathbf{\beta}_{0}-\mathbf{\beta
})+\mathbf{u}^{jl}\right)  }{s_{jl}^{a}(\mathbf{\beta}_{0},\mathbf{\gamma}%
_{0})}\right)  \right) \nonumber\\
&  =E\left(  \rho_{1}\left(  \frac{\left(  \mathbf{u}^{jl}-\mathbf{q}%
^{jl}\right)  ^{\top}\Sigma_{jl}^{\ast-1}(\mathbf{\gamma})\left(
\mathbf{u}^{jl}-\mathbf{q}^{jl}\right)  }{s_{jl}^{a}(\mathbf{\beta}%
_{0},\mathbf{\gamma}_{0})}\right)  \right) \nonumber\\
&  \geq E\left(  \rho_{1}\left(  \frac{\mathbf{u}^{jl\top}\Sigma_{jl}^{\ast
-1}(\mathbf{\gamma}_{0})\mathbf{u}^{jl}}{s_{jl}^{a}(\mathbf{\beta}%
_{0},\mathbf{\gamma}_{0})}\right)  \right) \label{equ:scale}\\
&  =E\left(  \rho_{1}\left(  \frac{m(\mathbf{y}^{jl},\mathbf{\mu}%
^{jl}(\mathbf{\beta}_{0}),\Sigma_{jl}^{\ast}(\mathbf{\gamma}_{0}))}{s_{jl}%
^{a}(\mathbf{\beta}_{0},\mathbf{\gamma}_{0})}\right)  \right)  =b,\nonumber
\end{align}
and therefore $s_{jl}^{a}(\mathbf{\beta},\gamma)\geq s_{jl}^{a}(\mathbf{\beta
}_{0},\mathbf{\gamma}\mathbf{_{0}})$. Under at least one of the two
assumptions in (\ref{assor}), by Lemma \ref{lemma:two} the inequality in
(\ref{equ:scale}) becomes a strict inequality and hence $s_{jl}^{a}%
(\mathbf{\beta},\mathbf{\gamma})>s_{jl}^{a}(\mathbf{\beta}_{0},\mathbf{\gamma
}_{0})$.

Lemma A.8 in \citet{garciaben_martinez_yohai_2006} proves that the function
$\tau(s)=s\ E(\rho_{2}(v/s))$ is a non-decreasing function of $s$ for any non
negative value $v$ under A1-A6. Using this result and since $s_{jl}%
^{a}(\mathbf{\beta},\mathbf{\gamma})>s_{jl}^{a}(\mathbf{\beta}_{0}%
,\mathbf{\gamma}_{0})$ we have
\begin{align*}
\tau_{jl}^{2}(\mathbf{\beta},\mathbf{\gamma})  &  =s_{jl}^{a}(\mathbf{\beta
},\mathbf{\gamma})E\left(  \rho_{2}\left(  \frac{m(\mathbf{y}^{jl}%
,\mathbf{\mu}^{jl}(\mathbf{\beta}),\Sigma_{jl}^{\ast}(\mathbf{\gamma}%
))}{s_{jl}^{a}(\mathbf{\beta},\mathbf{\gamma})}\right)  \right) \\
&  \geq s_{jl}^{a}(\mathbf{\beta}_{0},\mathbf{\gamma}_{0})E\left(  \rho
_{2}\left(  \frac{m(\mathbf{y}^{jl},\mathbf{\mu}^{jl}(\mathbf{\beta}%
),\Sigma_{jl}^{\ast}(\mathbf{\gamma}))}{s_{jl}^{a}(\mathbf{\beta}%
_{0},\mathbf{\gamma}_{0})}\right)  \right) \\
&  \geq s_{jl}^{a}(\mathbf{\beta}_{0},\mathbf{\gamma}_{0})E\left(  \rho
_{2}\left(  \frac{m(\mathbf{y}^{jl},\mathbf{\mu}^{jl}(\mathbf{\beta}%
_{0}),\Sigma_{jl}^{\ast}(\mathbf{\gamma}_{0}))}{s_{jl}^{a}(\mathbf{\beta}%
_{0},\mathbf{\gamma}_{0})}\right)  \right)  =\tau_{jl}^{2}(\mathbf{\beta}%
_{0},\mathbf{\gamma}_{0}).
\end{align*}
When one of the two assumptions in (\ref{assor}) hold, the last inequality is
strict proving that $\tau_{jl}^{2}(\mathbf{\beta},\mathbf{\gamma})>\tau
_{jl}^{2}(\mathbf{\beta}_{0},\mathbf{\gamma}_{0})$.
\end{proof}

\begin{proof}
[Proof of Theorem \ref{theorem:fisher}]

Consider first the case of $\mathbf{\gamma}\neq\mathbf{\gamma}_{0}$. According
to the Lemma \ref{lemma:three} it is enough to show that $\Sigma
(1,\mathbf{\gamma})\neq\Sigma(1,\mathbf{\gamma}_{0})$ implies that there
exists at least one pair $(j,l)$ such that $\Sigma_{jl}^{\ast}(\mathbf{\gamma
})\neq\Sigma_{jl}^{\ast}(\mathbf{\gamma}_{0})$. We prove it by contradiction.
Let us assume that $\Sigma_{jl}^{\ast}(\mathbf{\gamma})=\Sigma_{jl}^{\ast
}(\mathbf{\gamma}_{0})$ for all $1\leq j\leq l\leq p$ this implies that for
all $(j,l)$ there exists $\alpha_{jl}$ such that $\Sigma_{jl}(1,\mathbf{\gamma
})=\alpha_{jl}\Sigma_{jl}(1,\mathbf{\gamma}_{0})$. However $\alpha_{jl}%
=\alpha_{j^{\prime}l}$ since the corresponding matrices have one common
element and similarly we can prove that $\alpha_{jl}=\alpha_{jl^{\prime}}$.
Then all $\alpha_{jl}$ are equals to a same value $\alpha$. Then
$\Sigma(1,\mathbf{\gamma})=\alpha\Sigma(1,\mathbf{\gamma}_{0})$ contradicting
A9. Consider now the case $\mathbf{\delta}=\mathbf{\beta}-\mathbf{\beta}%
_{0}\neq\mathbf{0}$. It will be enough, according to Lemma \ref{lemma:three},
that there exists a pair $(j,l)$ such that $P(\mathbf{x}^{jl}\mathbf{\delta
}\neq\mathbf{0})>0$. For this to be true it is enough to show that there
exists a $j$ such that $P(\mathbf{x}^{j}\mathbf{\delta}\neq0)>0$ where
$\mathbf{x}^{j}$ stands for the $j$ row of the $p\times k$ matrix $\mathbf{x}%
$. Let us assume that $P(\mathbf{x}^{j}\mathbf{\delta}\neq0)=0$ for all $1\leq
j\leq p$. Since $\{\mathbf{x}\mathbf{\delta}\neq\mathbf{0}\}=\cup_{j=1}%
^{p}\{\mathbf{x}^{j}\mathbf{\delta}\neq0\}$ then
\begin{equation*}
0\leq P(\mathbf{x}\mathbf{\delta}\neq\mathbf{0})=P\left(  \cup_{j=1}%
^{p}\{\mathbf{x}^{j}\mathbf{\delta}\neq0\}\right)  \leq\sum_{j=1}%
^{p}P(\mathbf{x}^{j}\mathbf{\delta}\neq0)=0.
\end{equation*}
And this contradict the assumption A8.
\end{proof}

\begin{proof}
[Heuristic proof of Theorem \ref{teo:consistency}]It can be proved that the
functional $(\mathbf{B}(F),\mathbf{G}(F))$ defined in (\ref{esfuntau}) is
continuous at $F_{0}$ with the topology associated to the convergence in
distribution. Let $F_{n}$ be the empirical distribution of $(\mathbf{y}%
_{i},\mathbf{x}_{i})$, $1\leq i\leq n$. Then $F_{n}\overset{d}{\rightarrow
}F_{0}$ a.s. ( where $\overset{d}{\rightarrow}$ denotes weak convergence).
Then $(\widehat{\mathbf{\beta}},\widehat{\mathbf{\gamma}})=(\mathbf{B}%
(F_{n}),\mathbf{G}(F_{n}))\overset{d}{\rightarrow}(\mathbf{\beta}%
_{0},\mathbf{\gamma}_{0})$.

Note that
\begin{equation*}
(\mathbf{y}_{i}^{jl}-\mathbf{x}_{i}^{jl}\widehat{\mathbf{\beta}})^{\top}%
\Sigma_{jl}(1,\widehat{\mathbf{\gamma}})^{-1}(\mathbf{y}_{i}^{jl}%
-\mathbf{x}_{i}^{jl}\widehat{\mathbf{\beta}})
\end{equation*}
has approximately the distribution of $\eta v$, where $v$ has chi square
distribution with two degree of freedom. Therefore by (\ref{Msc2}) given any
$\varepsilon>0$ we have%
\begin{align}
\frac{2}{p(p-1)n}\sum_{i=1}^{n}\sum_{j=1}^{p-1}\sum_{l=j+1}^{p}\rho\left(
\frac{(\mathbf{y}_{i}^{jl}-\mathbf{x}_{i}^{jl}\widehat{\mathbf{\beta}})^{\top
}\Sigma_{jl}(1,\widehat{\mathbf{\gamma}})^{-1}(\mathbf{y}_{i}^{jl}%
-\mathbf{x}_{i}^{jl}\widehat{\mathbf{\beta}})}{\eta_{0}+\varepsilon}\right)
&  \overset{\text{a.s.}}{\rightarrow}E\left(  \rho\left(  \frac{\eta_{0}%
v}{\eta_{0}+\varepsilon}\right)  \right) \nonumber\\
&  < b. \label{est1}%
\end{align}
Similarly%
\begin{align}
\frac{2}{p(p-1)n}\sum_{i=1}^{n}\sum_{j=1}^{p-1}\sum_{l=j+1}^{p}\rho\left(
\frac{(\mathbf{y}_{i}^{jl}-\mathbf{x}_{i}^{jl}\widehat{\mathbf{\beta}})^{\top
}\Sigma_{jl}(1,\widehat{\mathbf{\gamma}})^{-1}(\mathbf{y}_{i}^{jl}%
-\mathbf{x}_{i}^{jl}\widehat{\mathbf{\beta}})}{\eta_{0}-\varepsilon}\right)
&  \overset{\text{a.s.}}{\rightarrow}E\left(  \rho\left(  \frac{\eta_{0}%
v}{\eta_{0}-\varepsilon}\right)  \right) \nonumber\\
&  >b. \label{est2}%
\end{align}
Therefore by (\ref{equ:eta0}) with probability one there exist $n_{0}$ such
that for $n\geq n_{0},\eta_{0}-\varepsilon<\widehat{\eta}<\eta_{0}%
+\varepsilon.$ This implies that $\widehat{\eta}\rightarrow\eta_{0}$ a.s..
\end{proof}

\bibliography{pwlmm}

\begin{thebibliography}{29}
\providecommand{\natexlab}[1]{#1}
\providecommand{\url}[1]{\texttt{#1}}
\expandafter\ifx\csname urlstyle\endcsname\relax
  \providecommand{\doi}[1]{doi: #1}\else
  \providecommand{\doi}{doi: \begingroup \urlstyle{rm}\Url}\fi

\bibitem[Agostinelli et~al.(2014)Agostinelli, Leung, Yohai, and
  Zamar]{agostinelli_leung_yohai_zamar_2014}
Agostinelli, C., Leung, A., Yohai, {V.J.}, and Zamar, {R.H.} (2014).
\newblock Robust estimation of multivariate location and scatter in the
  presence of cellwise and casewise contamination.
\newblock submitted,  \href{http://xxx.lanl.gov/abs/0707.3168}{{\ttfamily
  arXiv:0707.3168}}.

\bibitem[Alqallaf et~al.(2009)Alqallaf, {Van Aelst}, Zamar, and
  Yohai]{alqallaf_propagation_2009}
Alqallaf, F., {Van Aelst}, S., Zamar, R., and Yohai, {V.J.} (2009).
\newblock Propagation of outliers in multivariate data.
\newblock \emph{The Annals of Statistics}, 37\penalty0 (1):\penalty0 311--331.

\bibitem[Anderson et~al.(2009)Anderson, Oti, Lord, and
  Welch]{anderson_oti_lord_welch_2009}
Anderson, {D.K.}, Oti, {R.S.}, Lord, C., and Welch, K. (2009).
\newblock Patterns of growth in adaptive social abilities among children with
  autism spectrum disorders.
\newblock \emph{Journal of Abnormal Child Psychology}, 37:\penalty0 1019--1034.

\bibitem[Chervoneva and Vishnyakov(2011)]{chervonevavishnyakov2011}
Chervoneva, I. and Vishnyakov, M. (2011).
\newblock Constrained s-estimators for linear mixed effects models with
  covariance components.
\newblock \emph{Statistics in Medicine}, 30\penalty0 (14):\penalty0 1735--1750.

\bibitem[Copt and Heritier(2006)]{copt_heritier_2006}
Copt, S. and Heritier, S. (2006).
\newblock {MM}-estimation and inference in mixed linear models.
\newblock Technical Report 2006.01, Facult\'e des Sciences \'Economiques et
  Sociales, Universit\'e de Gen\`eve.

\bibitem[Davies(1987)]{davies_asymptotic_1987}
Davies, {P.L.} (1987).
\newblock Asymptotic behaviour of {S-Estimates} of multivariate location
  parameters and dispersion matrices.
\newblock \emph{The Annals of Statistics}, 15\penalty0 (3):\penalty0
  1269--1292.

\bibitem[Donoho(1982)]{donoho_breakdown_1982}
Donoho, {D.L.} (1982).
\newblock Breakdown properties of multivariate location estimators.
\newblock \emph{Qualifying paper, Harvard University, Boston}.

\bibitem[Donoho and Huber(1983)]{donoho_huber_1983}
Donoho, {D.L.} and Huber, {P.J.} (1983).
\newblock The notion of breakdown point.
\newblock A Festschrift for Erich L. Lehmann.

\bibitem[{Garcia Ben} et~al.(2006){Garcia Ben}, Martinez, and
  Yohai]{garciaben_martinez_yohai_2006}
{Garcia Ben}, M., Martinez, E., and Yohai, {V.J.} (2006).
\newblock Robust estimation for the multivariate linear model based on a
  $\tau$-scale.
\newblock \emph{Journal of Multivariate Analysis}, 97:\penalty0 1600--1622.

\bibitem[Gill(2000)]{gill_2000}
Gill, {P.S.} (2000).
\newblock A robust mixed linear model analysis for longitudinal data.
\newblock \emph{Statistics in Medicine}, 19:\penalty0 975--987.

\bibitem[Heritier et~al.(2009)Heritier, Cantoni, Copt, and
  Victoria-Feser]{heritier2009}
Heritier, S., Cantoni, E., Copt, S., and Victoria-Feser, {M.P.} (2009).
\newblock \emph{Robust methods in Biostatistics}.
\newblock John Wiley \& Sons.

\bibitem[Jacqmin-Gadda et~al.(2007)Jacqmin-Gadda, Sibillot, Proust, Molina, and
  Thi\'ebaut]{jacqmin-gadda_sibillot_proust_molina_thiebaut_2007}
Jacqmin-Gadda, H., Sibillot, S., Proust, C., Molina, J., and Thi\'ebaut, R.
  (2007).
\newblock Robustness of the linear mixed model to misspecified error
  distribution.
\newblock \emph{Computational Statistics \& Data Analysis}, 51:\penalty0
  5142--5154.

\bibitem[Jiang and Zhang(2001)]{jiang_zhang_2001}
Jiang, J. and Zhang, W. (2001).
\newblock Robust estimation in generalised linear mixed models.
\newblock \emph{Biometrika}, 88\penalty0 (3):\penalty0 753--765.

\bibitem[Koller(2013{\natexlab{a}})]{koller_2013a}
Koller, M. (2013{\natexlab{a}}).
\newblock \emph{Robust Estimation of Linear Mixed Models}.
\newblock PhD thesis, ETH Z\"urich.

\bibitem[Koller(2013{\natexlab{b}})]{koller_2013b}
Koller, M. (2013{\natexlab{b}}).
\newblock robustlmm: Robust linear mixed effects models.
\newblock URL \url{cran.r-project.org/web/packages/robustlmm/index.html}.

\bibitem[Lachosa et~al.(2009)Lachosa, Deyb, and
  Canchoc]{lachosa_deyb_canchoc_2009}
Lachosa, {V.H.}, Deyb, {D.K.}, and Canchoc, {V.G.} (2009).
\newblock Robust linear mixed models with skew-normal independent distributions
  from a bayesian perspective.
\newblock \emph{Journal of Statistical Planning and Inference}, 139:\penalty0
  4098--4110.

\bibitem[Lindsay(1988)]{lindsay_composite_1988}
Lindsay, {B.G.} (1988).
\newblock Composite likelihood methods.
\newblock \emph{Contemporary Mathematics}, 80\penalty0 (1):\penalty0 221--39.
\newblock URL \url{www.stat.psu.edu/~bgl/center/tr/PUB88a.pdf}.

\bibitem[Maronna et~al.(2006)Maronna, Martin, and
  Yohai]{maronna_martin_yohai_2006}
Maronna, {R.A.}, Martin, {R.D.}, and Yohai, {V.J.} (2006).
\newblock \emph{Robust Statistics. Theorey and Methods}.
\newblock Wiley.

\bibitem[Muler and Yohai(2002)]{muler_yohai_2002}
Muler, N. and Yohai, {V.J.} (2002).
\newblock Robust estimators for {ARCH} processes.
\newblock \emph{Journal of Time Series Analysis}, 23:\penalty0 341--375.

\bibitem[{R Core Team}(2014)]{Rsoftware}
{R Core Team} (2014).
\newblock \emph{R: A Language and Environment for Statistical Computing}.
\newblock R Foundation for Statistical Computing, Vienna, Austria.
\newblock URL \url{http://www.R-project.org/}.

\bibitem[Rousseeuw(1985)]{rousseeuw_multivariate_1985}
Rousseeuw, P. (1985).
\newblock Multivariate estimation with high breakdown point.
\newblock In Grossmann, W., Pflug, G., Vincze, I., and Wertz, W., editors,
  \emph{Mathematical Statistics and Applications}, pages 283--297. Reidel
  Publishing Company (co-published with Akad\'emiai e Kiad\'o, Budapest).

\bibitem[Searle et~al.(1992)Searle, Casella, and
  Mcculloch]{searle_casella_mcculloch_1992}
Searle, {S.R.}, Casella, G., and Mcculloch, {C.E.} (1992).
\newblock \emph{Variance Components}.
\newblock John Wiley and Sons.

\bibitem[Sinha(2004)]{sinha_2004}
Sinha, {S.K.} (2004).
\newblock Robust analysis of generalized linear mixed models.
\newblock \emph{Journal of the American Statistical Association}, 99\penalty0
  (466):\penalty0 451--460.

\bibitem[Stahel(1981)]{stahel_robuste_1981}
Stahel, {W.A.} (1981).
\newblock \emph{Robuste Sch\"atzungen: infinitesimale Optimalit\"at und
  Sch\"atzungen von Kovarianzmatrizen}.
\newblock PhD thesis, ETH Z\"urich.

\bibitem[{Victoria-Feser} and Copt(2006)]{victoria-feser_high_2006}
{Victoria-Feser}, {M.P.} and Copt, S. (2006).
\newblock High breakdown inference in the mixed linear model.
\newblock \emph{Journal of American Statistical Association}, 101:\penalty0
  292--300.

\bibitem[West et~al.(2007)West, Welch, and Galecki]{west_welch_galechi_2007}
West, {B.T.}, Welch, {K.B.}, and Galecki, {A.T.} (2007).
\newblock \emph{Linear Mixed Models: A Practical Guide Using Statistical
  Software}.
\newblock Chapman \& Hall/CRC.

\bibitem[Yohai and Zamar(1986)]{yohai_zamar_1986}
Yohai, {V.J.} and Zamar, {R.H.} (1986).
\newblock High breakdown point estimates of regression by means of the
  minimization of an efficient scale.
\newblock Technical Report No. 84, Department of Statistics, University of
  Washington.

\bibitem[Yohai and Zamar(1988)]{yohai_zamar_1988}
Yohai, {V.J.} and Zamar, {R.H.} (1988).
\newblock High breakdown-point estimates of regression by means of the
  minimization of an efficient scale.
\newblock \emph{Journal of the American Statistical Association}, 83:\penalty0
  460--413.

\bibitem[Yohai and Zamar(1997)]{yohai_zamar_1997}
Yohai, V.J. and Zamar, R.H. (1997).
\newblock Optimal locally robust m-estimates of regression.
\newblock \emph{Journal of Statistical Planning and Infererence}, 64:\penalty0
  309--323.

\end{thebibliography}
\end{document}